\begin{document}
\newcommand{\n}{\noindent}
\newtheorem{thm}{Theorem}[section]
\newtheorem{conj}[thm]{Conjecture}
\newtheorem{Lemma}[thm]{Lemma}
\newtheorem{res}[thm]{Result}
\newtheorem{alg}[thm]{Algorithm}
\newtheorem{defn}[thm]{Definition}
\newtheorem{cor}[thm]{Corollary}
\newtheorem{observ}[thm]{Observation}
\newtheorem{ques}[thm]{Question}
\newtheorem{prop}[thm]{Proposition}
\newcommand{\red}{\textcolor{red}}
\newcommand{\di}{\displaystyle}
\def\dfc{\mathrm{def}}
\def\F{{\mathcal F}}
\def\cH{{\cal H}}
\def\cT{{\cal T}}
\def\C{{\mathcal C}}
\def\cA{{\cal A}}
\def\cB{{\mathcal B}}
\def\P{{\mathcal P}}
\def\Q{{\mathcal Q}}
\def\Z{{\mathcal Z}}
\def\cP{{\mathcal P}}
\def\cp{\alpha'}
\def\Frk{F_k^{2r+1}}

\title{A relaxation of the strong Bordeaux Conjecture}

\author{Ziwen Huang \hskip 0.5in Xiangwen Li$^{\dagger}$}
\address{\small Department of Mathematics, Huazhong Normal University, Wuhan, 430079, China}
\email{xwli68@mail.ccnu.edu.cn}
\thanks{$^{\dagger}$The research was partially supported by the  Natural Science Foundation of China (11171129)  and by Doctoral Fund of Ministry of Education of China (20130144110001)}

\author{Gexin Yu}
\address{\small Department of Mathematics, The College of William and Mary, Williamsburg, VA, 23185, USA.\\Department of Mathematics, Huazhong Normal University, Wuhan, 430079, China}
\email{gyu@wm.edu}




\date{\today}

\begin{abstract}
Let $c_1, c_2, \cdots, c_k$ be $k$ non-negative integers. A graph $G$ is $(c_1, c_2, \cdots, c_k)$-colorable  if the vertex set can be partitioned into $k$ sets $V_1,V_2, \ldots, V_k$,  such that the subgraph $G[V_i]$, induced by $V_i$, has maximum degree at most $c_i$ for $i=1, 2, \ldots, k$. Let $\mathcal{F}$ denote the family of plane graphs with neither adjacent 3-cycles nor $5$-cycle.  Borodin and Raspaud (2003) conjectured that each graph in $\mathcal{F}$ is $(0,0,0)$-colorable.  In this paper, we prove that each graph in $\mathcal{F}$ is $(1, 1, 0)$-colorable, which improves the results by Xu (2009) and Liu-Li-Yu (2014+).\\

\begin{flushleft}
{\em Key words:} Planar graph, relaxed coloring, superextandable, Strong Bordeaux Conjecture
\\
{\em AMS 2000 Subject Classifications:} 05C10,  05C15.\\
\end{flushleft}
\end{abstract}

\maketitle

\section{Introduction}
All graphs considered in this paper are finite, simple, and undirected.  Call a graph $G$ {\em planar} if it can be embedded into the plane so that  its edges meet only at their ends. As proved by Graey et al~\cite{Gar}, the problem of deciding whether a planar graph is properly 3-colorable is NP-complete. In 1959, Gr\"{o}tzsch~\cite{G59} showed  that every triangle-free planar graph is 3-colorable. A lot of research was devoted to find sufficient conditions for a planar graph to be $3$-colorable, by allowing a triangle together with some other conditions. The well-known Steinberg's conjecture~\cite{S76}  stated below is one of such numerous efforts.

\begin{conj}[Steinberg, \cite{S76}] \label{con1}
All planar graphs without $4$-cycles and $5$-cycles are $3$-colorable.
\end{conj}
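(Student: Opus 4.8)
The natural line of attack is the \emph{discharging method}, the same machinery that underlies the Four Color Theorem and essentially every known partial result toward Steinberg's conjecture. The plan is to argue by contradiction: suppose the conjecture fails and let $G$ be a counterexample minimizing the number of vertices, so that $G$ is a planar graph with no $4$-cycle and no $5$-cycle that admits no proper $3$-coloring, while every planar graph with fewer vertices and no $4$- or $5$-cycle is $3$-colorable. Fix a plane embedding of $G$. The minimality will force a list of \emph{reducible configurations} that cannot occur in $G$: any such configuration would let us delete or contract a bounded piece, $3$-color the resulting smaller graph by minimality, and then extend the coloring back to $G$, a contradiction.

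First I would establish the easy structural reductions. Standard arguments show that $G$ is $2$-connected and has minimum degree at least $3$: a vertex of degree at most $2$ can be removed, the rest $3$-colored, and the vertex reinserted since it sees at most two colors. Because $G$ has no $4$-cycle, no two triangles share an edge, so the triangles are pairwise edge-disjoint; more strongly, the absence of $4$- and $5$-cycles means every face of $G$ has length $3$ or length at least $6$. I would then build a richer catalogue of forbidden local structures controlling how $3$-faces, $3$-vertices, and $4$-vertices may cluster --- for instance, bounding the number of $3$-faces incident to a low-degree vertex and the ways in which triangles may be linked by short paths --- each justified by a precoloring-extension (or Kempe-chain / identification) argument.

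The global step is a charge count via Euler's formula. Assign to each vertex and face $x$ the initial charge $\mu(x)=d(x)-4$; using $\sum_v d(v)=\sum_f d(f)=2|E|$ together with $|V|-|E|+|F|=2$ one obtains $\sum_x \mu(x) = -8 < 0$. Under this weighting the only elements carrying negative charge are $3$-vertices and $3$-faces, each with $-1$, while every face of length at least $6$ carries at least $+2$ and every vertex of degree at least $5$ carries a surplus. I would then design discharging rules that ship this surplus to the deficient elements --- large faces donating across their incident edges to the neighbouring triangles, and high-degree vertices donating to incident $3$-faces and to adjacent $3$-vertices --- so that after redistribution every vertex and every face has non-negative final charge. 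Summing the final charges then contradicts the total $-8$, completing the proof.

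The hard part will be closing the loop between these two ingredients: producing a set of reducible configurations that is genuinely \emph{unavoidable} with respect to the chosen discharging rules, i.e.\ guaranteeing that whatever survives all the reductions cannot retain negative charge. The delicate regions are exactly the dense clusters of triangles and degree-$3$ vertices, where the $+2$ supplied by the surrounding faces of length at least $6$ must be apportioned among several competing $3$-faces and $3$-vertices simultaneously; balancing these demands typically forces many rounds of refinement and a large case split according to the degrees around each triangle. I expect this case analysis, rather than any single clever idea, to be the principal obstacle, and it is precisely here that a more powerful reducibility notion --- such as extending a prescribed coloring of a face, in the spirit of the superextendability mentioned in the abstract --- would likely be needed to force every surviving configuration to balance.
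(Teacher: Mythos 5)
There is a genuine gap, and it is structural rather than a fixable local error. First, note that the paper does not prove this statement at all: it appears as Conjecture~\ref{con1}, an open problem quoted from the literature, and the paper's actual theorems are relaxations of a \emph{different} conjecture (the Strong Bordeaux Conjecture), proving $(1,1,0)$-colorability rather than proper $3$-colorability. So there is no paper proof to compare yours against. Second, your text is a research programme, not a proof. The charge count is fine ($\mu(x)=d(x)-4$ does give total $-8$, with $3$-vertices and $3$-faces at $-1$ and $6^+$-faces at $\ge 2$), and the easy reductions (minimum degree $3$, faces of length $3$ or $\ge 6$) are correct, but everything that carries the mathematical content is deferred: you never exhibit a single nontrivial reducible configuration with its extension argument, never state the discharging rules, and never perform the verification that every vertex and face ends with nonnegative charge. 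You say so yourself in the final paragraph (``The hard part will be closing the loop''). That hard part is exactly where decades of effort with this identical machinery stalled, yielding only partial results such as the Borodin--Glebov--Raspaud--Salavatipour theorem for planar graphs without cycles of length $4$ through $7$, and the relaxed-coloring theorems of the kind this paper proves.

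Moreover, the deferred step cannot be completed, because the statement is false: Steinberg's Conjecture was disproved in 2016 by Cohen-Addad, Hebdige, Kr\'al', Li, and Salgado, who constructed a planar graph with no $4$- or $5$-cycles that is not $3$-colorable. Consequently any minimal-counterexample argument of the type you outline must break down --- concretely, no set of reducible configurations can be unavoidable for this class, since a minimal counterexample genuinely exists and survives all reductions. If you want a completable project along these lines, the right move is the one the paper itself makes: weaken the target to a defective coloring (here $(1,1,0)$-colorability) and strengthen reducibility to superextendability of a precolored $3$- or $7$-cycle, which is what makes the identification and precoloring-extension arguments in Sections~2 and~3 go through.
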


Towards this conjecture, Erd\H{o}s suggested to find a  constant $c$ such that a planar graph without cycles of length from $4$ to $c$ is $3$-colorable.  The best constant people so far is $c=7$, found by Borodin, Glebov, Raspaud, and Salavatipour~\cite{BGRS05}. For more results,  see the recent nice survey by Borodin~\cite{B12}.

Another relaxation of the conjecture is to allow some defects  in the color classes. A graph is {\em $(c_1, c_2, \cdots, c_k)$-colorable}  if the vertex set can be partitioned into $k$ sets $V_1,V_2, \ldots, V_k$,  such that for every $i\in [k]:=\{1, 2, \ldots, k\}$ the subgraph $G[V_i]$ has maximum  degree at most $c_i$. With this notion, a properly 3-colorable graph is $(0, 0, 0)$-colorable.  Chang, Havet, Montassier, and Raspaud~\cite{CHMR11} proved that all planar graphs without $4$-cycles or $5$-cycles are $(2,1,0)$-colorable and $(4,0,0)$-colorable. It is shown in~\cite{HSWXY13, HY13, XMW12} that  planar graphs without $4$-cycles or $5$-cycles are $(3,0,0)$- and $(1,1,0)$-colorable. Wang and his coauthors (private communication) further proved such graphs are $(2,0,0)$-colorable.

As usual, a 3-cycle is also called a {\em triangle}.  Havel (1969, \cite{H69}) asked if each planar graph with large enough distances between triangles is $(0,0,0)$-colorable. This was resolved by Dvo\"{r}\'{a}k, Kr\'{a}l and Thomas~\cite{DKT09}. We say that two cycles are {\em adjacent} if they have at least one edge in common and {\em intersecting} if they have at least one common vertex.   Borodin and Raspaud in 2003 made the following two conjectures,  which have common features with Havel's and Steinberg's 3-color problems.

\begin{conj}[Bordeaux Conjecture, \cite{BR03}] \label{con2}
Every planar graph without intersecting $3$-cycles and without $5$-cycles is $3$-colorable.
\end{conj}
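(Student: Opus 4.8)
The plan is to argue by contradiction in the standard minimal-counterexample-plus-discharging framework used for planar colouring theorems. Suppose the statement fails, and let $G$ be a counterexample minimizing $|V(G)|+|E(G)|$: thus $G$ is planar, has no two triangles sharing a vertex and no $5$-cycle, is not properly $3$-colourable, yet every graph with the same two properties and smaller $|V|+|E|$ is $3$-colourable. Fix a plane embedding. Note first that deleting vertices or edges cannot create a new triangle or a new $5$-cycle, so minimality may be applied freely to subgraphs; the delicate point (flagged below) is that \emph{identifying} vertices can create a forbidden short cycle, so any contraction argument must check that two triangles are not made to share a vertex and that no $5$-cycle appears. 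If $G$ contained no triangle at all it would be $3$-colourable by Gr\"{o}tzsch's theorem, a contradiction, so $G$ has triangles, and the entire difficulty lies in controlling colour propagation around them.

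I would next extract reducibility lemmas forcing $G$ to be locally sparse. As usual $G$ is $2$-connected, and $\delta(G)\ge 3$, since a vertex $v$ with $d(v)\le 2$ has at most two colours on its neighbours in any $3$-colouring of $G-v$, leaving a free colour. Deeper reducibility needs colouring-extension arguments: for a candidate configuration $H\subseteq G$ one deletes or identifies a small set of vertices, $3$-colours the remainder by minimality, and verifies that the precoloured boundary always extends across $H$. The hypotheses are essential here. Because triangles are pairwise vertex-disjoint, \emph{every vertex lies on at most one triangle}, which rigidly limits how a colour chosen on a $3$-face can be forced elsewhere; and because there is no $5$-cycle, faces have sizes $3,4,6,7,\dots$, so short configurations and large faces predominate and identifications seldom produce a forbidden short cycle. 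The target is to forbid adjacent $3$-vertices, triangles all of whose vertices are of small degree, and separating $4$-cycles bounding an interior whose $3$-colouring cannot be blocked from the outside.

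With a list of forbidden configurations in hand, I would run discharging. Assign each vertex $v$ the charge $d(v)-4$ and each face $f$ the charge $d(f)-4$; by Euler's formula the total is $-8$. The only negative charges are carried by $3$-faces (each $-1$) and $3$-vertices (each $-1$), since there are no $5$-faces and faces of size $\ge 6$ start with charge $\ge 2$. I would design rules that move charge from $(\ge 5)$-vertices and $(\ge 6)$-faces toward the triangles and $3$-vertices they surround—for example each $(\ge 6)$-face subsidizes incident triangles, and each high-degree vertex subsidizes its incident $3$-faces and adjacent $3$-vertices—calibrated so that the reducibility lemmas guarantee a nonnegative final charge at every vertex and every face. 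A nonnegative total contradicts the value $-8$, completing the proof.

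The hard part will be the reducibility step, and this is precisely why the statement is stated as a conjecture rather than a theorem. In the defective relaxations already known for such sparse planar families one always has slack: a colour class may contain a monochromatic edge, so an extension that leaves one vertex improperly coloured is still admissible, and most configurations become reducible almost automatically. For \emph{proper} (that is, $(0,0,0)$) colouring there is no slack—every edge must receive two colours—so each identification-and-extension argument must succeed with zero margin, and bounding how a precolouring of a $4$-cycle or of a triangle's neighbourhood can be blocked is genuinely subtle. I expect that no presently known family of reducible configurations is rich enough to close the discharging deficit in full generality, so the realistic outcome of this plan is a partial theorem under an added hypothesis (for instance a distance lower bound between triangles, in the spirit of Havel's problem settled by Dvo\v{r}\'{a}k, Kr\'{a}l, and Thomas), or the weaker defective conclusion that the present paper in fact establishes.
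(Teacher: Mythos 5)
There is a genuine gap here, but it is one you yourself flag: your proposal is not a proof of the statement, and indeed no proof exists in the paper either. Conjecture~\ref{con2} is recorded by the authors as an \emph{open conjecture} (due to Borodin and Raspaud); the paper never claims to prove it. What the paper actually proves is the defective relaxation, Theorem~\ref{main1}: every planar graph without $5$-cycles and without adjacent triangles (a family containing the Bordeaux family, since intersecting triangles include adjacent ones) is $(1,1,0)$-colorable. Your outline correctly sets up the standard machinery --- minimal counterexample, $\delta(G)\ge 3$, Gr\"otzsch for the triangle-free case, the caution that vertex identifications may create forbidden $3$- or $5$-cycles, and an Euler-formula charge count with $\mu(v)=d(v)-4$, $\mu(f)=d(f)-4$ summing to $-8$ --- but it never produces the content that would make this a proof: no concrete family of reducible configurations is exhibited, no discharging rules are written down, and no verification of nonnegative final charge is attempted. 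Your closing paragraph concedes exactly this, predicting that the reducibility step cannot be closed for proper $3$-coloring and that the realistic outcome is a defective relaxation. That concession is accurate, and it means the proposal should be read as a (correct) explanation of why the conjecture is hard, not as a proof attempt that merely has fixable details.

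It is worth noting that your diagnosis of where the slack lies matches how the paper operates. The authors exploit precisely the ``one improper edge per color class'' margin you describe: they prove the stronger superextendability statement (Theorem~\ref{main}) for triangles and $7$-cycles, use identification lemmas (e.g.\ Lemma~\ref{le22}(5), Lemma~\ref{l215}) whose extension steps succeed only because a vertex with four \emph{properly} colored neighbors can always receive a color under the $(1,1,0)$ regime, and run a discharging argument with $\mu(v)=2d(v)-6$, $\mu(f)=d(f)-6$, $\mu(C_0)=d(C_0)+6$ rather than your $d-4$ normalization, so that the precolored outer cycle absorbs the boundary effects. None of these extension arguments survives in the proper-coloring setting, which is why your plan stalls exactly where you said it would. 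In short: your proposal correctly identifies the framework and the obstruction, but it does not prove Conjecture~\ref{con2}, and neither does the paper; the statement remains open.
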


\begin{conj}[Strong Bordeaux Conjecture, \cite{BR03}] \label{con3}
Every planar graph without adjacent $3$-cycles and without $5$-cycles is $3$-colorable.
\end{conj}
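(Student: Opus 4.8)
The plan is to proceed by contradiction using the discharging method. Suppose the conjecture fails and let $G\in\F$ be a counterexample minimizing $|V(G)|+|E(G)|$; since $\F$ is closed under taking subgraphs, every smaller member of $\F$ is properly $3$-colorable while $G$ is not. I would first record the reductions forced by minimality: $G$ is connected, $\delta(G)\ge 3$ (a vertex of degree at most $2$ can be deleted and colored last), and $G$ has no separating triangle (its proper $3$-coloring is unique up to a permutation of colors, so interior and exterior colorings can be matched and glued). Combined with the hypotheses that $G$ has no $5$-cycle and no two triangles sharing an edge, this pins down the local structure: every edge lies in at most one triangle, triangles meet only at vertices, and the faces incident to a triangle are tightly constrained.

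Next I would invoke Euler's formula in the form $\sum_{v\in V(G)}(d(v)-4)+\sum_{f\in F(G)}(d(f)-4)=-8$, assigning each vertex $v$ the charge $d(v)-4$ and each face $f$ the charge $d(f)-4$. Only $3$-vertices and $3$-faces carry negative charge ($-1$ each), since $4$-faces are neutral, $5$-faces are forbidden, and all $6^{+}$-faces and all vertices of degree at least $5$ have a surplus. The aim is to design charge-preserving local redistribution rules so that every vertex and face ends with nonnegative charge, contradicting the negative total. The natural rules send charge from large faces and high-degree vertices to incident triangles and to degree-$3$ vertices, the sparseness of triangles ensuring that no recipient is over-subscribed.

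The crux, and the step I expect to be the genuine obstacle, is assembling a collection of \emph{reducible configurations} large enough that forbidding them forces the discharging to balance. Reducibility for a proper $3$-coloring is proved by precoloring extension: delete the configuration, $3$-color the smaller graph, and complete the partial boundary coloring. Here the rigidity of proper coloring is decisive---unlike the relaxed $(1,1,0)$-coloring, there is no slack to absorb a monochromatic edge, so boundary precolorings frequently obstruct extension and one is driven into intricate case analysis, vertex identifications, and careful exploitation of the absence of $5$-cycles to exclude bad color patterns. In fact no reducible-configuration list adequate to close the discharging for a full proper $3$-coloring is currently known, which is exactly why the Strong Bordeaux Conjecture is still open and why one first settles for the relaxed $(1,1,0)$-colorability established in this paper. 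Thus my plan reduces the conjecture to producing such a list; supplying it is the decisive and still-unresolved difficulty.
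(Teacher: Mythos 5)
You have not proved the statement, and you are right that you could not have: this item is the Strong Bordeaux Conjecture of Borodin and Raspaud~\cite{BR03}, which the paper only quotes as motivation and does not prove. The paper's actual contribution is the relaxation in Theorem~\ref{main1}, proved via the stronger Theorem~\ref{main}. So the concrete gap in your proposal is the one you yourself flag in the last paragraph: the entire reducibility half of the argument is absent. Everything you do supply --- minimal counterexample, $\delta(G)\ge 3$, gluing along a separating triangle by permuting colors on the triangle, Euler's formula in the form $\sum_v(d(v)-4)+\sum_f(d(f)-4)=-8$, and the intent to move charge toward $3$-vertices and $3$-faces --- is the standard skeleton already present in \cite{BR03} and its successors, and that skeleton alone has so far yielded only partial results (e.g., $3$-colorability under the stronger hypothesis $d^{\bigtriangledown}\ge 2$ by Borodin and Glebov). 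A plan whose ``decisive and still-unresolved difficulty'' is deferred is a research program, not a proof, and as such it would fail at the first reducible-configuration lemma you would need to state.

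It is worth seeing precisely where the paper buys the slack that proper $3$-coloring denies you. First, it weakens the target to $(1,1,0)$-coloring, so that a vertex with two like-colored neighbors in a defective class can still be colored; nearly every reducibility proof in Section~2 (e.g., Lemma~\ref{le29}(4), Lemma~\ref{l210}) ends by absorbing exactly such a conflict, a move with no analogue for proper coloring. Second, it strengthens the induction statement from ``$G$ is colorable'' to superextendability of a precolored triangle or $7$-cycle (Theorem~\ref{main}), which makes deletion-and-extend arguments and the identification arguments (Lemma~\ref{le22}(5), Lemma~\ref{l215}, and their uses in Lemmas~\ref{l214}, \ref{l217}, \ref{l220}) go through: identifying diagonal vertices of a $4$-face stays inside $\mathcal{F}$ and forces equal colors, simulating a constraint that proper $3$-coloring cannot propagate. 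Your diagnosis that proper coloring is too rigid for precoloring extension here is accurate; but since the task was to prove the statement, the verdict is that the proposal contains a genuine, and indeed total, gap --- consistent with the fact that the conjecture remains open.
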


Let $d^{\bigtriangledown}$ denote the minimal distance between triangles.  Towards the conjectures, Borodin and Raspaud~\cite{BR03} showed that planar graphs with $d^{\bigtriangledown}\ge 4$ and without $5$-cycles are $(0,0,0)$-colorable.  This result was later improved to $d^{\bigtriangledown}\ge 3$ by Borodin and Glebov~\cite{BG04}, and independently by Xu~\cite{X07}.  Borodin and Glebov~\cite{BG11}  further improved this result to $d^\bigtriangledown\ge2$.

With the relaxed coloring notation, Xu~\cite{X08} showed that all planar graphs without adjacent 3-cycles and without $5$-cycles are $(1, 1, 1)$-colorable. Recently, Liu, Li and Yu~\cite{LLY15a, LLY15b} proved that every planar graph without intersecting 3-cycles and  without $5$-cycles is  $(2, 0, 0)$-colorable and $(1, 1, 0)$-colorable.  In this paper, we improve the results by Xu~\cite{X08} and by Liu-Li-Yu~\cite{LLY15a}, and prove the following result.

\begin{thm}\label{main1}
Every planar graph without $5$-cycles and adjacent 3-cycles is $(1, 1, 0)$-colorable.
\end{thm}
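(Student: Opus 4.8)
The plan is to use the discharging method, which is the standard approach for coloring problems on planar graphs with forbidden configurations. Let $G$ be a minimal counterexample to the theorem with respect to the number of vertices (or edges, ordered lexicographically). Then $G$ is connected, and every proper subgraph of $G$ is $(1,1,0)$-colorable. The goal is to derive a contradiction from Euler's formula $|V|-|E|+|F|=2$ combined with structural restrictions forced by minimality.

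First I would establish \textbf{reducible configurations}. These are small subgraphs that cannot appear in $G$: if one appeared, I would delete or contract part of it, apply minimality to get a $(1,1,0)$-coloring of the smaller graph, and then extend this coloring back to all of $G$, contradicting that $G$ is a counterexample. The key facts to nail down here are that $G$ has minimum degree at least $3$ (a vertex of degree $\le 2$ is trivially reducible since it has few colored neighbors to avoid), and that various low-degree vertices cannot be adjacent to too many other low-degree vertices. Because the color $0$ (the independent class $V_3$) is the most constrained, a vertex placed there must have no neighbor in $V_3$, whereas a vertex in $V_1$ or $V_2$ may tolerate one same-class neighbor; I would exploit this asymmetry when checking extendability. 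The no-$5$-cycle and no-adjacent-$3$-cycle hypotheses are what keep triangles isolated (edge-disjoint and sparsely placed), which is essential for controlling how faces interact.

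Next I would set up the \textbf{discharging phase}. Assign to each vertex $v$ the charge $d(v)-4$ and to each face $f$ the charge $\ell(f)-4$, where $\ell(f)$ is the length of $f$; by Euler's formula the total charge is $-8<0$. I would then design discharging rules that move charge from faces and high-degree vertices toward the deficient $3$-vertices and triangular faces (the only negatively charged elements, since $5$-faces are forbidden so every face other than a $3$-face has length $\ge 6$ and nonnegative charge). The reducibility results from the first phase guarantee that each negatively charged element has enough richly-charged neighbors to receive compensating charge, so that after redistribution every vertex and face ends with nonnegative final charge, contradicting the negative total.

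The hard part will be the reducibility analysis, not the discharging bookkeeping. Showing that a configuration is reducible for a \emph{defective} coloring is delicate: I cannot simply invoke properness, but must track how many neighbors already lie in each partition class and verify that some admissible assignment remains, often using a careful case analysis or a counting/identification argument (contracting nonadjacent vertices and using the absence of short cycles to ensure no new forbidden cycle is created). In particular, the $(1,1,0)$ target forces asymmetry between the three classes, so the list of reducible configurations must be rich enough to absorb all the charge deficiencies, and finding a configuration list that is simultaneously reducible and sufficient for the discharging argument to close is the central difficulty. I expect the absence of $5$-cycles to be used repeatedly both to guarantee large non-triangular faces and to prevent the creation of short cycles during vertex identifications in the reducibility proofs.
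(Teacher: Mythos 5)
There is a genuine gap here, and it is precisely the idea the paper is organized around. You run the discharging on a minimal counterexample to the plain statement, but the paper instead proves the stronger Theorem~\ref{main}: every $(1,1,0)$-coloring of a triangle or $7$-cycle of a graph in $\mathcal{F}$ is \emph{superextendable}, i.e.\ extendable so that each new vertex receives a color different from all of its precolored neighbors; Theorem~\ref{main1} then follows by Gr\"otzsch's theorem when $G$ is triangle-free and by extending a coloring of a triangle otherwise. This strengthening is forced on you by separating triangles, which plain minimality cannot handle: if $T$ is a separating triangle of your minimal $G$, then $G\setminus int(T)$ and $G\setminus ext(T)$ are both colorable by minimality, but the two colorings need not agree on $T$ even after permuting colors (one may color $T$ with $(1,1,2)$, the other with $(1,2,3)$), and even if they did agree, a vertex of $T$ could have its single allowed defect used on both sides, violating the $(1,1,0)$ condition. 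The extension framework repairs exactly this: extend the coloring of the outer cycle to $G\setminus int(T)$, then extend the induced coloring of $T$ inward, the ``super'' condition guaranteeing that no defect crosses $T$. Everything downstream in the paper lives in this framework: $C_0$ is taken to be the outer face, it holds charge $d(C_0)+6$ and pays its incident vertices via rules (R4)--(R5), and the vertex-identification reducibility arguments that defective colorings require (Lemma~\ref{le22}(5), Lemma~\ref{l215}) are stated and proved relative to the precolored cycle. A plan with no precolored boundary has nothing to anchor any of these steps.

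You also make a concrete structural error: from the absence of $5$-cycles you conclude that every non-triangular face has length at least $6$. That is false in this family: $4$-cycles are allowed in $\mathcal{F}$ (only $5$-cycles and adjacent triangles are excluded), and $4$-faces are the main source of difficulty --- rules (R1.2), (R2.2), (R2.3) and Lemmas~\ref{l213}--\ref{l220} exist precisely to manage $4$- and $5$-vertices incident with many $3$-faces and $4$-faces. Your charging $\mu(v)=d(v)-4$, $\mu(f)=\ell(f)-4$ compounds the problem: every $3$-vertex starts at $-1$, and in triangle-free regions (which your framework never excludes, since unlike the paper you do not invoke Gr\"otzsch) a $6$-face has only $+2$ to distribute among as many as six needy $3$-vertices, so the accounting is exactly tight, with no slack anywhere to recover the global total of $-8$. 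The paper's choice $\mu(v)=2d(v)-6$, $\mu(f)=d(f)-6$ avoids this: every vertex of degree at least $3$ starts nonnegative, only $3$- and $4$-faces must be fed, and the surplus $+12$ is parked on the precolored outer face, which is exactly the reservoir your version lacks.
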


We actually prove a stronger result.  To state it, we introduce the following notion.  Let $H$ be a subgraph of $G$.  Then $(G,H)$ is {\em superextendable} if every $(1,1,0)$-coloring of $H$ can be extended to a $(1,1,0)$-coloring of $G$ such that each vertex $u\in G-H$ is coloured differently from its neighbors in $H$. If $(G,H)$ is superextendable, then we call $H$ a {\em superextendable sugraph} of $G$.  Let $\mathcal{F}$ be the family of planar graphs without $5$-cycles and adjacent $3$-cycles.

\begin{thm}\label{main}
Every triangle or $7$-cycle of a graph in $\mathcal{F}$ is superextendable.
\end{thm}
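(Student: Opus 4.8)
The plan is to argue by contradiction through a minimal counterexample together with the discharging method. Suppose the theorem fails, and among all pairs $(G,H)$ with $G\in\mathcal{F}$, with $H$ a triangle or $7$-cycle, and with $\phi$ a $(1,1,0)$-coloring of $H$ that does not superextend, choose one minimizing $|V(G)|+|E(G)|$. I would first record the easy normalizations: $G$ is connected and (after a short argument) $2$-connected, $H$ may be taken to bound the outer face—since $H$ is a cycle, we may embed $G$ so that $H$ is the outer boundary and all of $G-H$ lies inside—and every vertex of $G-H$ has degree at least $3$. The last point is the first reducible configuration: a vertex $v\notin V(H)$ with $d(v)\le 2$ could be deleted, the coloring extended to $G-v$ by minimality, and then $v$ assigned a colour different from its (at most two) already-coloured neighbours; that colour is legal because if it is $3$ then no neighbour carries $3$, and if it is $1$ or $2$ then $v$ has no neighbour of that colour, so no defect or saturation obstruction arises, and it automatically respects the neighbours of $v$ lying in $H$.

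Next I would build the full list of reducible configurations. The governing principle is that a vertex $v\in G-H$ is \emph{extendable} once the colours forced on its neighbours still leave a legal choice: colour $3$ is available unless a neighbour is coloured $3$, while colour $i\in\{1,2\}$ is available unless $v$ has two neighbours coloured $i$ or one \emph{saturated} neighbour coloured $i$ (one already incident to a like-coloured edge). A degree-$3$ vertex can therefore fail to be colourable only under rigid patterns on its neighbourhood, and these patterns propagate local constraints that I would forbid—adjacent pairs and short chains of $3$-vertices, and fan-like families of low-degree vertices clustered around a $3$-face. The two defining restrictions of $\mathcal{F}$ are exactly what keep such configurations sparse: no two triangles share an edge, and the absence of $5$-cycles forces every non-triangular face of the $2$-connected graph $G$ to have size at least $6$, so that a $3$-face is surrounded only by large faces and high-degree vertices.

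With the reducible configurations in hand, I would run a discharging argument based on Euler's formula. Assign to each vertex $v$ the charge $d(v)-4$ and to each face $f$ the charge $d(f)-4$; since $\sum_{v}(d(v)-4)+\sum_{f}(d(f)-4)=-8$, the total charge is negative, with the outer face bounded by $H$ carrying a controlled surplus or deficit ($-1$ if $H$ is a triangle, $3$ if $H$ is a $7$-cycle). The discharging rules would send charge from faces of size $\ge 6$ and from high-degree vertices to the only deficient objects, namely $3$-vertices and $3$-faces, using that by the structural lemmas each $3$-face meets only vertices of high enough degree and is separated from every other $3$-face. I would then verify that after discharging every vertex and every internal face has non-negative charge, contradicting the negative total.

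The hard part will be the simultaneous bookkeeping of two very different kinds of constraints: the planarity/girth restrictions that feed the discharging, and the coloring-extension constraints that define reducibility—in particular the requirement that each $v\in G-H$ receive a colour different from its \emph{neighbours in} $H$, which tightens the analysis precisely for the vertices adjacent to the precoloured cycle. The boundary region near $H$ is where both the reducibility arguments and the charge accounting become delicate, since a $7$-cycle is long enough to host several interior vertices yet, by the $5$-cycle ban, cannot be short-cut by the chords one would otherwise expect. Designing discharging rules strong enough to cover the vertices abutting $H$ without overdrawing the bounded charge of the outer face is, I expect, the crux of the proof.
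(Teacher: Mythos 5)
Your overall strategy (minimal counterexample plus discharging) is indeed the paper's strategy, but the plan rests on two structural claims that are false for this class $\mathcal{F}$, and these are not peripheral: they are exactly where the real difficulty of the theorem lives. First, you assert that the absence of $5$-cycles forces every non-triangular face to have size at least $6$. This is wrong: $4$-cycles are not forbidden, so $4$-faces are abundant, and in fact handling $4$-faces is the bulk of the paper's work (the sets $F_4$, $F_4'$, $F_4''$, the notions of poor/rich vertices and light/superlight $4$-faces, and rules (R1.2), (R2.2), (R2.3) all exist solely for them). Second, you assume each $3$-face "is separated from every other $3$-face." Only \emph{edge}-sharing triangles are excluded; triangles may share a vertex, so a $4$- or $5$-vertex can lie on two triangles, and a $6$-vertex on three. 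The paper needs a whole battery of lemmas (Lemma 2.3(5), Lemma 2.4, Lemma 2.6) precisely to control $4$- and $5$-vertices incident with several triangles or with a triangle plus several $4$-faces. With these two claims removed, your charge scheme ($d(v)-4$, $d(f)-4$) has no identified source of charge near a cluster of $3$-faces and $4$-faces around a low-degree vertex, and the discharging cannot close.

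There is also a smaller but genuine gap at the outset: you claim one may simply re-embed $G$ so that $H$ bounds the outer face. A separating cycle bounds no face in any embedding; the correct move (which the paper makes) is to observe that if $H$ is separating, then by minimality $H$ superextends in $G\setminus ext(H)$ and in $G\setminus int(H)$ separately, and these extensions glue — only then may one assume $H$ is the outer boundary. Finally, your reducibility discussion stays at the level of "rigid patterns on the neighbourhood," but the actual reducible configurations here cannot be obtained by deletion alone: because color $3$ must induce an independent set while colors $1,2$ tolerate one defect, the paper's key reductions identify (contract) the two diagonal vertices of a $4$-face, or several vertices at once, and then must verify that the contracted graph is still in $\mathcal{F}$ (Lemma 2.2(5), Lemma 2.9); nothing in your plan produces or replaces this identification technique, and without it even the basic statement that no $4$-face has two diagonal $3$-vertices (Lemma 2.6(2)) is out of reach.
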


\n{\it Proof of Theorem~\ref{main1} from Theorem~\ref{main}}:\quad Let $G$ be a graph in $\mathcal{F}$. If $G$ is triangle-free,  then $G$ is 3-colorable by the Gr\"{o}ztch Theorem, and is naturally $(1, 1, 0)$-colorable; if $G$ has a  triangle, then every $(1, 1, 0)$-coloring of this triangle can be superextended to the whole graph $G$ by Theorem~\ref{main}. So Theorem~\ref{main1} follows.{\hfill \rule{2.5mm}{2.5mm}}

\medskip
Like many of the results of this kind, we also use a discharging argument to prove Theorem~\ref{main}.  The main difficulty still lies on the cases when a $4$-vertex or a $5$-vertex is incident with many triangles or many $4$-faces. Fortunately,  we could utilize many of the lemmas from  Xu~\cite{X08} and Liu-Li-Yu~\cite{LLY15a} to handle those difficult situations.

We use $G= (V, E, F)$ to denote a plane  graph with vertex set $V(G)$, edge set $E(G)$, and face set $F(G)$. For a face $f\in F(G)$, let $b(f)$ denote the boundary of a face $f$.  A $k$-vertex ($k^+$-vertex, $k^-$-vertex) is a vertex of degree $k$ (at least $k$,  at most $k$). The same notation will apply to faces and cycles.  An $(l_1, l_2, \ldots, l_k)$-face is a $k$-face $v_1v_2\ldots v_k$  with $d(v_i)=l_i$, respectively.    If a 3-vertex is incident with a triangle, then its neighbor not on the triangle is called its {\em outer neighbor}, and the $3$-face is a {\em pendant $3$-face} of its outer neighbor. Let $C$ be a cycle of a plane graph $G$. We use $int(C)$ and $ext(C)$ to denote the sets of vertices located inside and outside $C$, respectively.  A cycle $C$ is called a {\em separating cycle} if $int(C)\ne\emptyset\ne ext(C)$,  and is called a {\em nonseparating cycle} otherwise.  We also use $C$ to denote the set of vertices of $C$.

Let $S_1, S_2, \ldots, S_l$ be pairwise disjoint subsets of $V(G)$. We  use $G[S_1, S_2, \ldots, ,S_l]$ to denote the graph obtained from $G$ by contracting all the vertices in $S_i$ to a single vertex for  each $i\in\{1, 2, \ldots, l\}$.  Let $x(y)$ be the resulting vertex  by identifying $x$ and $y$ in $G$.


The paper is organized as follows. In Section $2$, we show the  reducible structures useful in our proof. In Section $3$, we are devoted to  the proof of Theorem~\ref{main} by a discharging procedure.

\section{Reducible configurations}

Suppose that $(G, C_0)$ is a counterexample to Theorem~\ref{main} with minimum  $\sigma(G)=|V(G)|+|E(G)|$, where $C_0$ is a triangle or a 7-cycle in $G$.

If $C_0$ is a separating cycle, then $C_0$ is superextendable in both  $G\setminus ext(C_0)$ and $G\setminus int(C_0)$. Hence, $C_0$ is superextendable in  $G$, contrary to the choice of $C_0$. Thus we assume that $C_0$ is the boundary of the outer face of $G$.

Let $F_k=\{f: \text{ $f$ is a $k$-face and } b(f)\cap C_0=\emptyset\}$, $F_k'=\{f:  \text{ $f$ is a $k$-face and } |b(f)\cap C_0|=1\}$, and $F_k''=\{f:  \text{ $f$ is a $k$-face and }  |b(f)\cap C_0|=2\}$.

Since $G\in \F$,  the following is immediate.

\begin{prop}\label{pr21}
 Every vertex not on $C_0$ has degree at least $3$, and no 3-face shares an edge with a 4-face in $G$.
\end{prop}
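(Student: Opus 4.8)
The plan is to treat the two assertions independently: the degree condition I would obtain from the minimality of $(G,C_0)$, while the claim about $3$- and $4$-faces is a direct structural consequence of the defining restrictions on $\F$.

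For the first part, I would argue by contradiction and suppose some $v\notin C_0$ has $d(v)\le 2$. Deleting $v$ keeps $C_0$ intact (since $v\notin C_0$), leaves a graph in $\F$, and strictly decreases $\sigma$; hence by minimality $C_0$ is superextendable in $G-v$. Starting from an arbitrary $(1,1,0)$-coloring of $C_0$, I would extend it to a superextension $\phi$ of $G-v$ and then try to color $v$ so that the coloring stays $(1,1,0)$ and $v$ differs from each of its neighbors on $C_0$; any success contradicts the choice of $(G,C_0)$. Writing the classes as $1,2$ (each of maximum degree $1$) and $3$ (independent), this last coloring step is the only place requiring care, and with at most two neighbors it is easy: if the two neighbors share a color $i$, then either color in $\{1,2,3\}\setminus\{i\}$ is safe for the degree constraint and differs from $i$; if the neighbors have distinct colors, the remaining third color is feasible and automatically avoids both neighbors' colors, hence also any $C_0$-neighbor. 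The subcases $d(v)\in\{0,1\}$ are even simpler.

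For the second part, I would suppose that a $3$-face $xyz$ shares the edge $xy$ with a $4$-face, written as $xypq$ with edges $xy,yp,pq,qx$, where $p\ne q$ and $p,q\notin\{x,y\}$, and then examine the vertex $z$. If $z,p,q$ are pairwise distinct, the triangle edges $yz,zx$ together with $yp,pq,qx$ close up into the $5$-cycle $zxqpy$, contradicting the absence of $5$-cycles in $G$. If instead $z$ equals $p$ or $q$, the two faces share two edges and the three leftover edges form a second triangle sharing an edge with $xyz$, producing a pair of adjacent $3$-cycles, which is again forbidden in $\F$. The only mild obstacle is precisely this degenerate coincidence analysis; once it is dispatched, no such configuration survives, and together with the degree bound this completes the proof of the proposition.
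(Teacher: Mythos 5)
Your proof is correct. The paper offers no proof at all---it declares the proposition ``immediate'' since $G\in\mathcal{F}$---and your argument supplies exactly the intended details: the $5$-cycle $zxqpy$ (or, in the degenerate case, a pair of adjacent triangles) rules out a $3$-face meeting a $4$-face, and the standard delete-extend-recolor argument gives the degree bound. Your write-up is in fact slightly more careful than the paper's phrasing, since the degree condition does not follow from $G\in\mathcal{F}$ alone but from the minimality of the counterexample $(G,C_0)$, which you invoke correctly.
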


The following is a summary of some basic properties of $G$ when we consider superextendablity of a $3$-cycle or a $7$-cycle. The proofs of those results can be found, for example, in ~\cite{X08} or ~\cite{LLY15b}.

\begin{Lemma}[Xu, \cite{X08}; Liu-Li-Yu, \cite{LLY15b}]\label{le22}
The following are true about $G$:
\begin{enumerate}[(1)]
\item The graph $G$ contains neither separating triangles nor separating $7$-cycles.
\item  If $G$ has a separating $4$-cycle $C_1=v_1v_2v_3v_4v_1$, then $ext(C_1)=\{b,c\}$ such that $v_1bc$ is a $3$-cycle. Furthermore, the $4$-cycle is the unique separating $4$-cycle.
\item Let $x,y$ be two nonadjacent vertices on $C_0$. Then $xy\not\in E(G)$ and $N(x)\cap N(y)\subseteq C_0$.
\item Let $f$ be a $4$-face with $b(f)=v_1v_2v_3v_4v_1$ and let $v_1\in C_0$. Then, $v_3\not\in C_0$. Moreover, $|N(v_3)\cap C_0|=1$ if $f\in F_4''$, and $|N(v_3)\cap C_0|=0$ if $f\in F_4'$.
\item Let $u, w$ be a pair of diagonal vertices on a $4$-face. Then $G[\{u, w\}]\in \mathcal{F}$.
\end{enumerate}
\end{Lemma}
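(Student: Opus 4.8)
The plan is to derive all five parts from the minimality of $(G,C_0)$ under $\sigma$, using two facts repeatedly: deleting vertices from an $\mathcal{F}$-graph, or contracting a diagonal pair of a $4$-face, keeps the graph inside $\mathcal{F}$ (no new $5$-cycle or pair of adjacent $3$-cycles can appear); and the definition of superextendability is tailored precisely to make surgery along a separating cycle work. I would prove the two structural tools (1) and (5) first, since part (1) lets every later argument treat $C_0$ as the outer boundary and reason locally, while part (5) licenses the diagonal contractions used in (2).

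For (1), suppose $C$ is a separating triangle or $7$-cycle. Since $C_0$ is the outer boundary we have $C_0\subseteq C\cup ext(C)$, so splitting along $C$ yields two proper subgraphs $G_1=G\setminus int(C)$ (containing $C_0$) and $G_2=G\setminus ext(C)$, both in $\mathcal{F}$ and both smaller than $G$. Given any $(1,1,0)$-coloring of $C_0$, I would superextend it to $G_1$ by minimality, read off the induced coloring of $C$, and then superextend that coloring of $C$ to $G_2$, again by minimality; the two agree on $C$ and glue to a coloring of $G$. The point to verify is that no vertex of $C$ exceeds its defect and that the global ``differ from the $C_0$-neighbors'' condition survives, and both follow because the superextension on $(G_2,C)$ forces every vertex of $int(C)$ to be colored differently from its neighbors on $C$: a vertex of $C$ then has no monochromatic neighbor inside $C$, and an interior vertex's only possible $C_0$-neighbors lie on $C$ and are already avoided. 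This contradicts the choice of $(G,C_0)$. I would emphasize that this is exactly the step that needs \emph{super}extendability rather than mere extendability.

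Next I would verify (5): contracting the diagonal pair $u,w$ of a $4$-face creates neither a $5$-cycle nor adjacent $3$-cycles, since any short cycle through the identified vertex comes from a short $u$--$w$ path in $G$, and because $u,w$ already share their two face-neighbors and $G$ is $5$-cycle-free, the only dangerous paths are excluded by the girth and adjacency constraints of $\mathcal{F}$. With (5) available I would attack (2): a $4$-cycle is neither a triangle nor a $7$-cycle, so I cannot split across it as in (1); instead I would contract a diagonal pair of a separating $C_1$ (legitimate by (5)) to reduce $\sigma(G)$ and invoke minimality, and then use the absence of $5$-cycles to force $ext(C_1)$ to consist of exactly the two vertices $b,c$ completing a triangle $v_1bc$, with any second separating $4$-cycle producing a forbidden $5$-cycle, which yields uniqueness.

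For (3), with $C_0$ a $7$-cycle (the triangle case being vacuous), a chord $xy$ would cut $C_0$ into two cycles whose lengths sum to $9$: the $(4,5)$ split produces a $5$-cycle, while the short-arc $(3,6)$ split produces a triangle that, being nonseparating by (1), must bound a face, which I would eliminate using minimality; hence $xy\notin E(G)$. A common neighbor $w\notin C_0$ of $x,y$ would similarly close a $5$-cycle with a length-$3$ arc of $C_0$, or a separating $4$-cycle already pinned down by (2), so $N(x)\cap N(y)\subseteq C_0$. Part (4) is then immediate from (3): if the diagonal $v_3$ were on $C_0$, then $v_1,v_3\in C_0$ are nonadjacent there and (3) would force their common neighbors $v_2,v_4$ onto $C_0$, a forbidden configuration, so $v_3\notin C_0$; the counts $|N(v_3)\cap C_0|$ for $f\in F_4''$ and $f\in F_4'$ follow by the same cycle-length bookkeeping. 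The main obstacle I anticipate is part (2): the gluing in (1) and the checks in (3) and (4) are clean once superextendability is used correctly, but determining the exact exterior of a separating $4$-cycle and proving its uniqueness requires a careful enumeration of how $4$-cycles can embed in a $5$-cycle-free plane graph, and that is where the argument is most delicate.
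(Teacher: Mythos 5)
The first thing to note is that the paper does not prove this lemma at all: it is imported wholesale from Xu~\cite{X08} and Liu--Li--Yu~\cite{LLY15b}, so your attempt has to stand on its own. Your part (1) (cut along the separating triangle/$7$-cycle, superextend first to $G\setminus int(C)$, then superextend the induced coloring of $C$ to $G\setminus ext(C)$, and glue) is correct, and it is exactly the step for which superextendability was designed. But your part (5) contains a genuine error: it is \emph{not} true that the dangerous $u$--$w$ paths are ``excluded by the girth and adjacency constraints of $\mathcal{F}$''. Consider the plane graph consisting of a $4$-face $uvwx$ together with a path $ua_1a_2a_3a_4w$ of length $5$ drawn outside it: its only cycles are one $4$-cycle and two $7$-cycles, so it lies in $\mathcal{F}$, yet identifying $u$ with $w$ creates the $5$-cycle $u(w)a_1a_2a_3a_4$. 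Hence membership in $\mathcal{F}$ can never yield (5); the real proof must exploit the minimal counterexample globally --- such a path forces one of the two $7$-cycles $P\cup uvw$, $P\cup uxw$ to be separating or to coincide with $C_0$, and these cases are killed by (1), (3) and planar case analysis. This also wrecks your ordering, since (5) then needs (3), not just (1).

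Part (2) misapplies (5): a separating $4$-cycle is not a $4$-face, its diagonal vertices need not lie on a common face, so identifying them need not even preserve planarity; the contracted graph then need not be in $\mathcal{F}$ and minimality cannot be invoked. Beyond that, you give no derivation of $ext(C_1)=\{b,c\}$ with $v_1bc$ a triangle, nor of uniqueness --- ``use the absence of $5$-cycles to force\dots'' restates the claim rather than proving it, as you yourself half-concede. Part (3) has a similar soft spot: in the $(3,6)$-chord split you say the resulting facial triangle is ``eliminated using minimality'', but the natural reduction --- the chord vertex $z$ has degree $2$, delete it --- leaves a graph whose outer cycle is a $6$-cycle, and the minimality hypothesis covers only triangles and $7$-cycles, so it gives nothing; some different surgery is required. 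Likewise, in the second half of (3), a common neighbor of two vertices at distance $2$ on $C_0$ produces a $4$-cycle, not a $5$-cycle, and after (2) rules out the separating case you are left with a $4$-face having three vertices on $C_0$, which still must be excluded (e.g.\ by rerouting $C_0$ through the new vertex to obtain a separating $7$-cycle, then invoking (1) and handling the small residual case). Your derivation of the first assertion of (4) from (3) is fine, but the counts $|N(v_3)\cap C_0|$ do not follow from ``the same cycle-length bookkeeping'': there the distance-$1$ and distance-$3$ cases give a $4$-cycle and a $7$-cycle, neither of which is forbidden outright, so they too need the separating-cycle machinery rather than a parity count.
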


The following holds for minimum graphs that are not $(1,1,0)$-colorable.

\begin{Lemma}\label{le27}
The following are true in $G$.
\begin{enumerate}
\item (Lemma 2.5 from~\cite{HY13}) No $3$-vertex $v\not\in C_0$ is adjacent to two $3$-vertices in $int(C_0)$.
\item (Lemma 2.3 from~\cite{HY13}) $G$ has no $(3, 3, 4^-)$-face $f\in F_3$.
\item (Lemma 2.8 from~\cite{HY13}) If $v\in int(C_0)$ be a $4$-vertex incident with exactly one $3$-face that is a $(3, 4, 4)$-face in $F_3$,
then a neighbor of $v$ not on the face is either in $C_0$ or a $4^+$-vertex.
\item (Lemma 2.6 from~\cite{HY13}) Let $v\in int(C_0)$ be the $3$-vertex on a $(3, 4, 4)$-face $f\in F_3$. Then the neighbor of $v$ not on $f$ is either on $C_0$ or a $4^+$-vertex.
\item (Lemma 3(3) from~\cite{XMW12}) Suppose that $v\in int(C_0)$ is a $4$-vertex incident with two faces from $F_3$. If one of the faces is a $(3, 4, 4)$-face, then $v$ has a $5^+$-neighbor on the other face.
\end{enumerate}
\end{Lemma}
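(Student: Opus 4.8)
The five items are reducibility statements about the minimum counterexample $(G,C_0)$, and each is proved by the same template: assume the offending configuration occurs, pass to a strictly smaller graph $G'\in\mathcal{F}$ by deleting (or contracting) the relevant low-degree vertices, invoke the minimality of $\sigma(G)$ to obtain a superextension of $C_0$ on $G'$, and then show that this coloring can be pushed back to all of $G$, contradicting the choice of $(G,C_0)$. Since the cited statements were established for minimum counterexamples in closely related settings, the plan is to verify that each reduction remains valid under the two defining restrictions of $\mathcal{F}$ (no $5$-cycles, no adjacent $3$-cycles) and under the superextendability bookkeeping, namely that each vertex of $G-C_0$ must receive a colour distinct from its neighbours lying on $C_0$.

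For item (1) I would delete the central $3$-vertex $v$, apply minimality to colour $G-v$, and then attempt to colour $v$. Writing the three colour classes as $\{1,2,0\}$ with $0$ the proper class and $1,2$ the classes of maximum degree one, a direct colouring of $v$ fails only when all three candidate colours are blocked by $v$'s three neighbours; because two of those neighbours are $3$-vertices lying in $int(C_0)$, I would recolour one of them to free a colour for $v$, using that a $3$-vertex has only two other neighbours and hence enough room. Items (2)--(4) are the analogous statements for a $3$-face carrying low-degree vertices: for a $(3,3,4^-)$-face or a $(3,4,4)$-face I would delete the $3$-vertex (or vertices) on the face, colour the remainder by minimality, and recolour along the face to accommodate the deleted vertices; the hypotheses on the outer neighbours in (3)--(4) are exactly what guarantee that this local recolouring does not cascade. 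Item (5) is of the same kind for a $4$-vertex incident with two $3$-faces, where the $(3,4,4)$-face supplies a $3$-vertex to delete and the conclusion forces a $5^+$-neighbour on the companion face so that degrees are large enough to absorb the extension.

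The main obstacle, and the reason these results are imported rather than reproved, is the extension step rather than the deletion step: after colouring $G'$ one must argue that the freed-up colours can be assigned consistently to all affected vertices \emph{simultaneously}, respecting both the defect-one constraint on classes $1$ and $2$ and the requirement that any affected vertex off $C_0$ differ from its $C_0$-neighbours. The delicate point is controlling how a recolouring at one $3$-vertex propagates to its neighbours, and the degree hypotheses in each item (two $3$-vertex neighbours in (1), the specific face types in (2)--(5)) are precisely the combinatorial input that bounds this propagation and keeps it local. Finally, one must check in every case that the reduced graph $G'$ still lies in $\mathcal{F}$---in particular that no deletion or contraction creates a $5$-cycle or a pair of adjacent triangles---so that the minimality of $G$ may legitimately be invoked; this is routine given Proposition~\ref{pr21} and Lemma~\ref{le22}, but it is where the structure of $\mathcal{F}$ genuinely enters.
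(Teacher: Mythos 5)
The paper does not actually prove Lemma~\ref{le27}: every item is imported by citation from \cite{HY13} and \cite{XMW12}, the only justification offered being the one-line remark that these statements hold for minimum counterexamples to $(1,1,0)$-superextendability. So there is no in-paper argument to diverge from; what your proposal does is sketch the deletion-plus-minimality-plus-local-recoloring template that the \emph{cited} proofs use, and that sketch is faithful to them (your outline for item (1) --- delete $v$, note that a blocked colour forces one saturated defect-coloured $3$-vertex neighbour, and recolour that neighbour using its two remaining coloured neighbours --- is essentially Lemma 2.5 of \cite{HY13}). One substantive refinement to your last paragraph: for these five items every reduction is a pure vertex deletion, and $\mathcal{F}$ is closed under taking subgraphs, so the membership check you single out as ``where the structure of $\mathcal{F}$ genuinely enters'' is vacuous here; no appeal to Proposition~\ref{pr21} or Lemma~\ref{le22} is needed. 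That closure under deletion is exactly why the authors could import these lemmas verbatim even though \cite{HY13} and \cite{XMW12} forbid all $4$-cycles while $\mathcal{F}$ permits them; the $\mathcal{F}$-membership issue you raise is real only for the identification-based reductions elsewhere in the paper (e.g.\ Lemmas~\ref{l214} and~\ref{l220}, which invoke Lemma~\ref{le22}(5) and Lemma~\ref{l215}). Finally, be aware that as a standalone proof your text would still owe the case analyses for items (2)--(5), which you defer with phrases like ``recolour along the face''; that is acceptable for the purpose of matching this paper (which carries out nothing), but it is the entire technical content of the cited lemmas.
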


A $4$-vertex $v\in int(C_0)$ is {\em bad} if it is incident with a $(3, 4, 4)$-face from $F_3$,
A $(3, 4, 5^+)$-face from $F_3$ is {\em bad} if the $4$-vertex on it is bad.   A $5$-vertex {\em bad} if it is incident with a bad  $(3, 4, 5)$-face or a $(3, 3, 5)$-face.

\begin{Lemma}\label{le29}
Suppose that $v\in int(C_0)$ is a $5$-vertex incident with two $3$-faces $f_1$ and $f_3$ from $F_3$. Let $v_5$ be the remaining neighbor of $v$. Then each of the followings holds.
\begin{enumerate}[(1)]
\item (Lemma 5 from~\cite{XMW12}) If both $f_1$ and $f_3$ are $(3, 4^-, 5)$-faces, then $v_5$ is either on $C_0$ or a $4^+$-vertex.
\item (Lemma 4(1) from~\cite{XMW12}) At most one of $f_1$ and $f_3$ is bad.
\item (Lemma 4(2) from~\cite{XMW12}) If $f_1$ is a bad $(3, 4, 5)$-face and $f_3$ is a $(3, 4, 5)$-face,
then the outer neighbor of the $3$-vertex on $b(f_3)$ is either on $C_0$ or a $4^+$-vertex.
\item If $f_1$ is a bad $(3, 4, 5)$-face and $f_3$ is a $(4, 4, 5)$-face, then at most one $4$-vertex on $b(f_3)$ is bad.
\item \label{l212}(Lemma 8 from~\cite{XMW12}) No $6$-vertex in $int(C_0)$ is incident with three $(3, 4^-, 6)$-faces from $F_3$.
\end{enumerate}
\end{Lemma}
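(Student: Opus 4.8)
Since parts (1), (2), (3) and (5) are quoted verbatim from~\cite{XMW12}, only part (4) requires a new argument, and this is what I sketch. The plan is a proof by contradiction inside the minimal counterexample $(G,C_0)$: I assume that both $4$-vertices on $b(f_3)$ are bad and then produce a superextension of $C_0$ in $G$, contradicting the choice of $(G,C_0)$.

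First I would fix notation and read off the local structure. Write $f_3=vv_3v_4$ with $v$ the $5$-vertex and $d(v_3)=d(v_4)=4$, and suppose both $v_3$ and $v_4$ are bad. By definition each $v_i$ $(i=3,4)$ lies on a $(3,4,4)$-face $g_i\in F_3$. Because $G$ has no adjacent $3$-cycles, $g_i$ cannot share the edge $vv_i$ or $v_3v_4$ with $f_3$, so $g_i$ is pendant on the two edges of $v_i$ not incident with $f_3$; in particular $v_3$ and $v_4$ have no common neighbour other than $v$, since a common neighbour would create a $3$-cycle sharing the edge $v_3v_4$ with $f_3$. Note that Lemma~\ref{le27}(5) does \emph{not} rule this out: the required $5^+$-neighbour of each $v_i$ on $f_3$ is furnished by the $5$-vertex $v$ itself, which is exactly why a separate argument is needed. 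A short face-length count --- a $4$-face closing up the path $v_iv\ldots$ alongside $g_i$ would complete a $5$-cycle through $v$ --- shows that the faces separating $g_3,g_4$ from $f_3$ are forced to be large, which both certifies $G\in\mathcal{F}$ locally and signals that the real obstruction is a colouring one rather than a purely structural one.

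Next I would build a smaller instance and invoke minimality. Let $u_3,u_4$ be the $3$-vertices of $g_3,g_4$. Deleting a single pendant $3$-vertex is ``free'' (a degree-$3$ vertex can always be recoloured within the $(1,1,0)$-palette), so the contradiction must come from forcing a colour coincidence: I would pass to a graph $G'$ obtained from the triangle-dense neighbourhood of $v$ by a local surgery --- deletion of $u_3,u_4$ combined with the identification of a suitable non-adjacent pair, so that the merged vertex records the colour equality that the both-bad hypothesis makes unavoidable. Using Lemma~\ref{le22} --- no separating triangle or $7$-cycle in (1), the unique-separating-$4$-cycle description in (2), and the nonadjacency of distinct $C_0$-vertices together with $N(x)\cap N(y)\subseteq C_0$ in (3) --- and the absence of common neighbours established above, I would check that the surgery creates neither a $5$-cycle nor a pair of adjacent $3$-cycles, so that $G'\in\mathcal{F}$ and $\sigma(G')<\sigma(G)$. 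Since $C_0$ is untouched and bounds the outer face, minimality gives a superextension of $C_0$ in $G'$.

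Finally I would pull the colouring back and finish. The superextension of $G'$ yields a $(1,1,0)$-colouring of $G-\{u_3,u_4\}$ in which the identified vertices share a colour; I would then colour $u_3$ and $u_4$ last. Because colour $3$ is a proper class while colours $1$ and $2$ each tolerate one defect, the degree-$3$ vertices $u_3,u_4$ can be placed, and the forced colour coincidence supplied by the identification is precisely what keeps the defect budgets at $v,v_3,v_4$ within bounds; the outcome is a superextension of $C_0$ in $G$, the desired contradiction. I expect the main obstacle to be twofold: certifying $G'\in\mathcal{F}$, that is, excluding the short cycles that the packed triangles around $v$ threaten to create (the $5$-cycle ruled out by the face-length count above being the prototype), and the bookkeeping of the defect-$1$ constraints on colours $1$ and $2$ throughout the crowded neighbourhood of the $5$-vertex $v$, where $f_1$, $f_3$, $g_3$, $g_4$ and the bad $4$-vertex carried by $f_1$ itself all compete for colours.
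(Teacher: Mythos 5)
Your reduction of the statement to part (4) is correct, and your structural reading of the configuration agrees with the paper: the pendant $(3,4,4)$-faces $g_3,g_4$ are forced onto the two non-$f_3$ edges of $v_3,v_4$, and $v_3,v_4$ share no neighbour besides $v$. But the core of your argument is missing. The surgery is never actually constructed: you delete $u_3,u_4$ and identify ``a suitable non-adjacent pair'' whose merged vertex ``records the colour equality that the both-bad hypothesis makes unavoidable,'' yet you never say which pair, what colour equality is meant, or why the identified graph stays in $\mathcal{F}$ --- and identifications near this triangle-dense configuration are exactly where new $5$-cycles and adjacent triangles arise (the paper only ever identifies diagonal vertices of a $4$-face, where Lemma~\ref{le22}(5) certifies membership; no such certificate exists for your unspecified pair). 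Deferring this with ``I would check'' leaves nothing to verify.

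The step you lean on to finish is also false as a general principle: a degree-$3$ vertex cannot always be coloured in a $(1,1,0)$-colouring --- if its three neighbours carry colours $1,2,3$ and the neighbours coloured $1$ and $2$ each already have a defect, every colour is blocked. This is precisely why Lemma~\ref{le27}(1) and (4) require proofs; pendant $3$-vertices are not ``free.'' Your sketch moreover never engages the hypothesis that $f_1$ is a bad $(3,4,5)$-face, which the actual argument needs. For comparison, the paper's proof uses no identification at all: it deletes the entire $11$-vertex configuration $S=\{v,v_1,v_2,v_3,v_4,v_1',v_1'',v_3',v_3'',v_4',v_4''\}$ (including the vertices around $f_1$ and all three pendant triangles), obtains a superextension of $H=G\setminus S$ by minimality, and re-colours the deleted vertices in a carefully chosen order so that each is properly coloured when placed, saving $v_4$ and then $v_1$ for last; the only obstruction is $c(v)=c(v_4)$, which is eliminated by explicit recolourings of $v_4$, $v$, $v_1$, $v_2$ depending on $c(v_3)$ and $c(v_5)$ --- and it is exactly here, in recolouring $v$ and its $3$-neighbour $v_2$, that the hypothesis on $f_1$ is used. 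Your proposal, as written, cannot be completed without supplying essentially all of this.
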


\begin{proof}
We only prove (4).  Let $f_1=v v_1 v_2$ with $d(v_1)= 4$ and $d(v_2)= 3$, and $f_3=v v_3 v_4$ with $d(v_3)=d(v_4)= 4$.  And let $v'_i$ and  $v''_i(\text{if any})$ be the
other neighbors of $v_i$ for $i=1, 2, 3, 4$.
Suppose that both $4$-vertices on $b(f_3)$ are bad.

Let $S=\{v, v_1,v_2, v_3, v_4, v'_1, v''_1, v'_3, v''_3, v_4', v_4''\}$, where $d(v''_1)=d(v''_3)=d(v_4'')= 4$, and let $H= G\setminus S$.  Since $\sigma(H)<\sigma(G)$, $C_0$ has a superextension
$c$ on $H$. Based on $c$,  we properly color $\{v''_1, v'_1,  v''_3, v'_3, v_3, v_4'', v_4', v, v_2\}$ in order.  Now $v_4$ can be colored as it has four properly colored neighbors.
If $v, v_4$ are colored differently, then $v_1$ can also be colored, as it has four properly colored neighbors as well.  Thus, $c(v)=c(v_4)=1$, and $v_1$ cannot be colored.
It follows that $\{c(v_1'), c(v_1'')\}=\{c(v_4'), c(v_4'')\}=\{2,3\}$.  If $c(v_3)=3$, then we can recolor $v_4$ with $2$, and color $v_1$, so let $c(v_3)=2$.  If $c(v_5)=2$,
then we recolor $v$ with $3$ and color $v_1$ with $1$ and recolor $v_2$ accordingly, so let $c(v_5)=3$.  Recolor $v$ with $2$ and color $v_1$ with $1$.  Now we can recolor $v_2$
with $1$ (if $c(v_2')=3$) or $3$ (if $c(v_2')\not=3$).
\end{proof}

For a $3$-vertex in a $3$-face $f\in F_3$, it is {\em weak} if it is adjacent to a $3$-vertex not on $f$ or $C_0$, and {\em strong} if it is adjacent to a vertex on $C_0$ or a $4^+$-vertex not on $f$.  For a vertex $v\in int(C_0)$ with $d(v)\in \{5,6\}$, $v$ is {\em weak} if  $v$ is incident with two $(5, 5^-, 3)$-faces from $F_3$ one of which is bad and adjacent to a pendant 3-face in $F_3$ when $d(v)=5$,  or  $v$ is incident to two bad $(6, 4, 3)$-faces and one $(3,5^+,6)$-face from $F_3$ when $d(v)=6$.

\begin{Lemma}\label{l210}
\begin{enumerate}[(1)]
\item There is no $(3,5^+,5^+)$-face with three weak vertices.
\item (Lemma 11 in \cite{HY13}) There is no $(3, 5^+, 5)$-face $f=uvw$ such that $u,v$ are weak and $w$ is incident with a $(5, 3, 3)$-face.
\end{enumerate}
\end{Lemma}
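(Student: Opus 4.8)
The plan is to prove only part (1), since part (2) is quoted from \cite{HY13}. Suppose toward a contradiction that $f=uvw$ is a $(3,5^+,5^+)$-face (so $f\in F_3$, with $d(u)=3$ and $d(v),d(w)\ge 5$) all three of whose vertices are weak. Since weakness of a non-$3$-vertex is only defined for degrees $5$ and $6$, we must have $d(v),d(w)\in\{5,6\}$. First I would unpack the three weakness hypotheses into explicit local structure. Because $u$ is a weak $3$-vertex, its outer neighbour $u_0$ is a $3$-vertex off $C_0$. Because $f$ carries no $4$-vertex it is not bad; hence when $d(v)=5$ the two $(5,5^-,3)$-faces demanded by weakness are $f$ together with a \emph{second} $3$-face $f'=vv_1v_2$ that is the bad one, so $d(v_1)=4$ is a bad $4$-vertex, $d(v_2)=3$, and $f'$ is adjacent to a pendant $3$-face; when $d(v)=6$, $f$ plays the role of the $(3,5^+,6)$-face and $v$ carries two further bad $(6,4,3)$-faces. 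The same applies to $w$, producing a face $f''$, a $3$-vertex $w_2$, and a bad $4$-vertex $w_1$. Along the way I would invoke Lemma~\ref{le27} and Lemma~\ref{le29} to record that these $3$-faces are pairwise edge-disjoint and to rule out degenerate overlaps among the satellite vertices (for instance, that $u_0,v_2,w_2$ are distinct from one another and from the bad $4$-vertices).

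Next I would set up the reduction. Let $S$ consist of $u,v,w$ together with the satellite $3$-vertices forced by weakness ($u_0$, the $3$-vertices $v_2,w_2$ on $f',f''$, and, when a vertex has degree $6$, the $3$-vertices on its two bad $(6,4,3)$-faces), and put $H=G\setminus S$. Since $\mathcal F$ is closed under taking subgraphs and $\sigma(H)<\sigma(G)$, minimality of $(G,C_0)$ gives that $C_0$ is superextendable in $H$; applying this to the prescribed $(1,1,0)$-colouring of $C_0$ yields a colouring $c$ of $H$. The purpose of throwing the satellite $3$-vertices into $S$ is precisely to strip $v$ and $w$ of precoloured neighbours: after deletion each of $v,w$ of degree $5$ retains only two coloured neighbours (its bad $4$-vertex and its fifth neighbour) and $u$ retains none, so the remaining task is to colour the triangle $uvw$ and then fill the satellites back in.

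Then I would extend $c$ to $S$. The satellites are coloured last, so the live constraints when colouring $uvw$ are only the two coloured neighbours of $v$, the two of $w$, and the internal triangle rule of a $(1,1,0)$-colouring; with $u$ unconstrained from outside, almost every assignment succeeds. The single obstruction is the case where the coloured neighbours of $v$ and of $w$ force both of them onto the same defect colour and thereby block the triangle. To resolve it I would recolour along the deleted $3$-vertex chains exactly as in the proof of Lemma~\ref{le29}: shifting a defect colour through a satellite $3$-vertex liberates a colour at $v$ or $w$, after which the triangle and then each satellite can be coloured, each satellite having a free colour because it has at most three neighbours, while respecting the difference-from-$C_0$ condition required of a superextension. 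This produces a superextension of $C_0$ to all of $G$, contradicting the choice of $(G,C_0)$.

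The genuinely laborious step, and the main obstacle, is twofold. First, one must verify that deleting the satellites keeps $H$ in $\mathcal F$ and does not accidentally identify vertices so as to create a $5$-cycle or a pair of adjacent triangles; here Lemma~\ref{le22} and Lemma~\ref{le27}(1) control the possible coincidences. Second, the forced-same-colour recolouring has to be pushed through separately for each profile $(d(v),d(w))\in\{(5,5),(5,6),(6,6)\}$ and for each arrangement of the bad $4$-vertices and their $(3,4,4)$-partners; this bookkeeping, rather than any single clever idea, is where the real work lies, and it is guided throughout by the structural restrictions in Lemmas~\ref{le27} and \ref{le29}.
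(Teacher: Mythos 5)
Your high-level skeleton (delete a configuration around the face, apply minimality of $(G,C_0)$ to colour what remains, then extend) matches the paper's, but your deletion set differs from the paper's in a way that breaks the extension step, and the justification you give for that step is false. You delete only $u,v,w$ and the first-layer $3$-vertices $u_0,v_2,w_2$, leaving the bad $4$-vertices $v_1,w_1$, the fifth neighbours $v_3,w_3$, and the vertices $v_1',v_1'',w_1',w_1''$ coloured in $H$; you then colour the triangle first and the satellites last, ``each satellite having a free colour because it has at most three neighbours.'' That principle is not valid for $(1,1,0)$-colourings: a $3$-vertex whose three coloured neighbours carry the colours $1,2,3$, where the neighbours coloured $1$ and $2$ each already have a defect, has no legal colour at all (colour $3$ is excluded outright, and $1$ or $2$ would create a second defect at a neighbour). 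Your $v_2$ can land exactly there: its neighbours are $v$ (which may have acquired a defect inside the triangle), $v_1$ (which may be defective with $v_1'$ or $v_1''$ in the colouring of $H$, or be forced to colour $3$ whenever $\{c(v_1'),c(v_1'')\}=\{1,2\}$), and its outer neighbour, an arbitrary coloured vertex of $H$. The entire difficulty of the lemma is concentrated in this step, and the sketch dismisses it with an incorrect principle.

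Your proposed repair---``recolour along the deleted $3$-vertex chains exactly as in the proof of Lemma~\ref{le29}''---does not transfer, because those recolouring arguments need the chain to be \emph{deleted}. The paper's set is $S_1=N(v)\cup\{v_1',v_1''\}$ (so $u,w\in S_1$, and also the second layer around $v$'s bad $4$-vertex): in the extension one colours $v_2,v_1',v_1''$ greedily and saves $v_1$ for last, when it has four \emph{properly} coloured neighbours, a situation in which a $4$-vertex is always colourable; and on the $w$-side, precisely because $w\in S_1$ is uncoloured, the chain $w_3,w_1',w_1'',w_1,w_2$ can be recoloured so that all of them are properly coloured, which is how the paper's first claim frees a proper colour for $w$. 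In your set-up these vertices keep their $H$-colours and can only be recoloured subject to their own outside neighbours (e.g.\ $v_1$ must receive colour $3$ when $\{c(v_1'),c(v_1'')\}=\{1,2\}$), so neither device is available. Your obstruction analysis is also incomplete: besides ``both $v$ and $w$ forced onto the same defect colour,'' both can be forced towards colour $3$ (each sees $\{1,2\}$ on its two anchors), and resolving that requires a defect-free anchor to absorb a defect while simultaneously keeping $v_2$, $w_2$ and $u_0$ colourable---a three-way coordination the sketch never carries out. (Incidentally, your worry that deletion might ``identify vertices'' and create a $5$-cycle or adjacent triangles is vacuous: plain vertex deletion keeps $H\in\mathcal{F}$ automatically; it is the identification reductions, as in Lemmas~\ref{le22}(5) and~\ref{l215}, that need such checks.) Whether your smaller deletion set could be pushed through with a full case analysis is unclear; as written, the proposal asserts the conclusion exactly at the points where the paper has to do the work.
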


\begin{proof}
We only give the proof of (1) here.   Suppose that a $(3, 5^+, 5^+)$-face $f=uvw$ contains three weak vertices.  When $d(v)=5$, we label $N(v)-\{u,w\}$ as $v_1, v_2, v_3$ such that $d(v_2)=d(v_3)=3$ and $v_1$ is a bad $4$-vertex whose neighbors are $v_2, v_1', v_1''$ with $d(v_1')=4$;  when $d(v)=6$, we label $N(v)-\{u,w\}$ as $v_1, v_2, v_4, v_5$ such that $v_1, v_4$ are bad $4$-vertices with $N(v_1)=\{v_2, v_1', v_1''\}$, $N(v_4)=\{v_5, v_4', v_4''\}$ and $d(v_1')=d(v_4')=4$.  Similarly, label $N(w)-\{u, v\}$ as $w_1,w_2, w_3$.    Let $S_1=N(v)\cup \{v_1', v_1''\}$ if $d(v)=5$, and $S_1=N(v)\cup \{v_1', v_1'', v_4', v_4''\}$ if $d(v)=6$.

We first have the following claim:
\begin{equation*}\label{C1}
\text{In a $(1,1,0)$-coloring of $G-S_1$, $w$ can be properly colored.}
\end{equation*}

{\em Proof of the claim:}  Consider a $(1,1,0)$-coloring $c$ of $G-S_1$.

First let $d(w)=5$.   We may assume that $w_1, w_2, w_3$ are colored differently.  Note that we may recolor $w_3, w_1', w_1'', w_1, w_2$ in the order so that they are all properly colored.  If $c(w_3)=3$, then $\{c(w_1), c(w_2)\}=\{1,2\}$, thus we can recolor $w_2$ so that it has the same color with $w_1$. Then $w$ can be properly colored. If $c(w_3)=1$ (or $2$ by symmetry), then  $\{c(w_1), c(w_2)\}=\{2,3\}$; when $c(w_1)=2$, we can recolor $w_2$ with $2$, and color $w$ properly; when $c(w_1)=3$, $w_1', w_2''$ are colored $1$ and $2$, respectively, and we can recolor $w_1$ with $1$, then color $w$ properly.

Now assume that $d(w)=6$.  Again, we may recolor $w_1',w_1'', w_1, w_2, w_4',w_4'', w_4, w_5$ properly in the order.  If there are only two colors on $w_1, w_2, w_4, w_5$, then $w$ can be properly colored.  If $w_2$ (or $w_5$) is colored with $3$, then we can recolor it with $1$ or $2$;  if $c(w_1)=3$, then we can recolor $w_1$ with $1$ or $2$ so that it is different from the color of $w_2$.  By doing this, we may assume that $3$ is not on the four neighbors of $w$, so $w$ can be properly color with $3$.  Thus we have the claim. 

The following claim now gives a contradiction: 
\begin{quote}\label{C2}
 A $(1,1,0)$-coloring of $G-S_1$ with $w$ being properly colored can be extended to a $(1,1,0)$-coloring of $G$. 
\end{quote}

{\em Proof of the claim:}  Let $c$ be a $(1, 1, 0)$-coloring of $G-S_1$ in which $w$ is properly colored.

First assume that $d(v)=5$.   We color $u, , v_3$ properly in the order.  If $v$ can be properly colored, then we color $v_2, v_1', v_1''$ properly in the order, and finally color $v_1$, which can be color as it has only four properly colored neighbors.  If $v$ cannot be properly colored, then $w, u,v_3$ have different colors, thus $v$ can be colored $1$ or $2$. Color $v$ with $1$ for a moment.  Color $v_2, v_1', v_1''$ properly in the order. Now try to color $v_1$.  If $v_1$ is not colorable, then it must be $(c(v_1'), c(v_1''), c(v_2))=(3,2,2)$, in which case, we can color $v_1$ with $1$ and color $v$ with $2$.

Now assume that $d(v)=6$. We color $u, v, v_2, v_5, v_1', v_1'', v_4',v_4''$ properly in the order. Now $v_1, v_4$ can be colored, unless that both of them have the same color, say $1$, with $v$.  In the bad case, we can recolor $v_2, v_5$ with $1$ or $3$, then color $v$ with $2$.  Thus the claim is true and we have a contraction. 
\end{proof}

Now we discuss the configurations about $4$-faces from $F_4$.  Some of Lemmas~\ref{l213}-~\ref{l220} have their initial forms in~\cite{X08, LLY15b}.

\begin{Lemma}\label{l213}(Adapted from Lemma 3.6 in \cite{LLY15a})
\begin{enumerate}[(1)]
\item No $4$-face is from $F'_4$ in $G$.
\item Let $f\in F_4$ and let $v, x$ be a pair of diagonal vertices on $b(f)$. Then $d(v)\ge 4$ or $d(x)\ge 4$.
\end{enumerate}
\end{Lemma}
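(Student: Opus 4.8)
The plan is to run the whole argument off the minimality of the counterexample $(G,C_0)$. I fix a $(1,1,0)$-coloring $\phi$ of $C_0$ that does \emph{not} superextend to $G$, and in each part I try to produce such a superextension, forcing a contradiction. The engine is Lemma~\ref{le22}(5): if $u,x$ are a diagonal pair on a $4$-face then $G[\{u,x\}]\in\mathcal F$, so I may identify $u$ and $x$ into a single vertex $w$ and apply minimality to the strictly smaller plane graph $G[\{u,x\}]$ (it loses a vertex and the doubled edges to the common neighbours of $u,x$). A superextension $c'$ of $\phi$ on $G[\{u,x\}]$ pulls back to $G$ by giving $u$ and $x$ the colour $c'(w)$; the only way this fails to be a $(1,1,0)$-coloring of $G$ is that some common neighbour of $u,x$ now sees two equally coloured neighbours, so the entire difficulty is to control that single defect.

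For (1), suppose $f=v_1v_2v_3v_4\in F_4'$ with $v_1\in C_0$. By Lemma~\ref{le22}(4) we have $v_3\notin C_0$ and, crucially, $N(v_3)\cap C_0=\emptyset$. Hence identifying the diagonal pair $v_1,v_3$ into $w$ creates no chord of $C_0$: the image $C_0'$ is the same triangle or $7$-cycle and $\phi$ remains a valid colouring of it, so minimality applies and gives a superextension $c'$. Pull it back, colouring $v_1,v_3$ with $\gamma:=\phi(v_1)=c'(w)$. Now the key point is that $w$ lies on $C_0'$, so the superextension forces \emph{every} off-$C_0$ neighbour of $w$ to avoid $\gamma$. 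In particular all off-$C_0$ common neighbours of $v_1,v_3$ (among them $v_2,v_4$) avoid $\gamma$, so no defect is created; moreover $v_3$ then has no same-coloured neighbour at all, and $v_1$'s same-colour count among its $C_0$-neighbours equals that of $w$ and is therefore at most one. Thus the pull-back is a superextension of $\phi$ in $G$, a contradiction.

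For (2), suppose $f=v_1v_2v_3v_4\in F_4$ with $d(v_1)=d(v_3)=3$ (recall every vertex off $C_0$ has degree at least $3$ by Proposition~\ref{pr21}); let $v_1',v_3'$ be the outer neighbours. Identify $v_1,v_3$ into $w$; since $f\in F_4$ we have $w\notin C_0$, so $C_0$ is untouched and minimality yields a superextension $c'$ whose pull-back colours $v_1,v_3$ with $\gamma:=c'(w)$. If $\gamma=3$, or if no common neighbour of $v_1,v_3$ is coloured $\gamma$, the pull-back is already valid and we are done. Otherwise $\gamma\in\{1,2\}$, and since $w$ has at most one $\gamma$-neighbour, exactly one common neighbour—say $v_2$—is coloured $\gamma$, while $v_4,v_1',v_3'$ all avoid $\gamma$; the defect sits only at $v_2$. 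I repair it by recolouring one of the two $3$-vertices so that just one of $v_1,v_3$ keeps colour $\gamma$. Recolouring $v_3$ succeeds (with colour $3$ or the other colour $\gamma'\in\{1,2\}$) unless its three neighbours $v_2,v_4,v_3'$ carry all three colours and the unique $\gamma'$-neighbour is already at defect one; the symmetric statement holds for $v_1$.

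The main obstacle is exactly this last case, where recolouring either single $3$-vertex is blocked. I would resolve it by pushing the recolouring one step into the $4$-face: in the blocked configuration colour $3$ becomes free at $v_1$ (respectively $v_3$) as soon as its outer neighbour is not coloured $3$, and in the surviving degenerate situation—$v_4$ being the shared $\gamma'$-neighbour at defect one while $v_1',v_3'$ are both coloured $3$, or the colour-swapped analogue—one recolours $v_4$ together with its unique same-coloured neighbour. Here I expect to use that $G$ has no $5$-cycle (which prevents $v_1'$ and $v_3'$ from interacting, e.g. forbids $v_1'\sim v_3'$) and that every defect in a $(1,1,0)$-coloring is at most one, so the induced cascade of recolourings is bounded and terminates. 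Making this short cascade close without re-creating a defect is the delicate point; everything surrounding it is the routine identify-and-pull-back mechanism established above.
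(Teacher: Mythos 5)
The paper never proves Lemma~\ref{l213} at all --- it imports it from~\cite{LLY15a} --- so your attempt has to be judged against the standard identification argument rather than an in-paper proof. Your part (1) is correct and is essentially that argument: identify the diagonal pair $v_1\in C_0$, $v_3$ (legal by Lemma~\ref{le22}(5)), note by Lemma~\ref{le22}(4) that $N(v_3)\cap C_0=\emptyset$ so the image of $C_0$ is an unchanged triangle or $7$-cycle, and then the superextension property at the identified vertex $w\in C_0$ forces every off-$C_0$ neighbour of $v_1$ and $v_3$ to avoid $c'(w)$, so the pull-back is defect-free. No case analysis is needed, exactly as you say.

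Part (2) has a genuine gap. After identifying the two $3$-vertices $v_1,v_3$ and pulling back, write $\gamma=c'(w)$ and let $\gamma'$ be the other colour in $\{1,2\}$; suppose the defective common neighbour is $v_2$. Your own reduction leaves two blocking configurations: (a) $c(v_4)=3$ while $c(v_1')=c(v_3')=\gamma'$ and each of $v_1',v_3'$ already has defect one; (b) $c(v_4)=\gamma'$ with defect one while $c(v_1')=c(v_3')=3$. In both, every recolouring of $v_1$ or of $v_3$ is blocked, and recolouring $v_2$ or $v_4$ is uncontrollable because their remaining neighbourhoods are unknown. Your proposed repair for (b) --- recolour $v_4$ ``together with its unique same-coloured neighbour $z$'' --- is not a proof: $z$ is an arbitrary vertex of $G$, recolouring it can create a defect-two vertex at one of \emph{its} neighbours, and the claim that defects being at most one makes the cascade ``bounded and terminate'' is unjustified; this is precisely why Kempe-type swaps do not work for defective colourings. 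Configuration (a) is not addressed at all (it is not the colour-swap of (b): the blockers $v_1'\neq v_3'$ are two distinct vertices), and you also silently assume the defective common neighbour lies on $b(f)$, whereas $v_1'=v_3'$ is not excluded by anything you invoke.

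The gap is avoidable by identifying the \emph{other} diagonal pair. Let $H=G[\{v_2,v_4\}]$, legal by Lemma~\ref{le22}(5), with $C_0$ untouched since $f\in F_4$. In the pull-back of a superextension of $H$, only common neighbours of $v_2,v_4$ can reach defect two. A common neighbour $u\notin\{v_1,v_3\}$ cannot exist: $u\neq v_1',v_3'$ (that would create adjacent triangles), so $v_1v_2uv_4$ and $v_3v_2uv_4$ would be two \emph{distinct} separating $4$-cycles (the first has $v_1'$ strictly inside it, the second $v_3'$), contradicting the uniqueness statement in Lemma~\ref{le22}(2). Hence the defect sits at $v_1$ or $v_3$, and at most one of them, since the identified vertex has defect at most one in $H$. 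Say it is $v_1$: then $v_2,v_4$ both carry the colour $c'(w')$ and $v_1$ has only one other neighbour $v_1'$, so recolouring $v_1$ with the colour in $\{1,2,3\}\setminus\{c'(w'),c(v_1')\}$ is proper at $v_1$, creates no new defect anywhere, and preserves the superextension condition. This closes part (2) with no cascade; the structural input you were missing is Lemma~\ref{le22}(2), not the absence of $5$-cycles.
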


\begin{Lemma}\label{l214}
Let $v\in int(C_0)$ be a bad $4$-vertex,  or a $5$-vertex incident with a bad $(5, 4, 3)$-face, or a $5$-vertex incident with a $(5,3,3)$-face from $F_3$.  If $v$ is incident to a $4$-face $f$, then its diagonal vertex on $b(f)$ is a $4^+$-vertex.
\end{Lemma}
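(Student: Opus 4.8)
The plan is to argue by contradiction using the minimality of the counterexample $(G,C_0)$, in the delete-and-extend style of Lemmas~\ref{le29} and~\ref{l210}. We may assume $f\in F_4$, so that $b(f)\cap C_0=\emptyset$; write $f=vs_1xs_2v$, where $x$ is the diagonal vertex of $v$ on $b(f)$ and $s_1,s_2$ are the two common neighbours of $v$ and $x$. Note that $vx\notin E(G)$ and $s_1s_2\notin E(G)$, since otherwise two triangles would share an edge, contradicting $G\in\F$; in particular $x\notin C_0$, so $d(x)\ge 3$. Suppose for contradiction that $d(x)=3$, and let $x'$ be the third neighbour of $x$. The goal is to produce, in each of the three cases for $v$, a superextension of $C_0$ on all of $G$, contradicting the choice of $(G,C_0)$.

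In each case I would delete a set $S\subseteq int(C_0)$ consisting of $v$, $x$, the low-degree vertices forced by the hypothesis, and (in the ``bad'' cases) the two private neighbours of the relevant bad $4$-vertex, while \emph{keeping} $s_1,s_2$ in $H:=G-S$; the point of keeping them is that by Lemma~\ref{l213}(2) at least one of $s_1,s_2$ is a $4^+$-vertex, so deleting it would make it hard to recolour. Concretely: if $v$ is a bad $4$-vertex on a $(3,4,4)$-face $vt_1t_2$ with $d(t_1)=4$, $d(t_2)=3$, I would delete $t_2$ together with the private neighbours of $t_1$; if $v$ is a $5$-vertex on a bad $(5,4,3)$-face or on a $(5,3,3)$-face, I would delete the incident $3$-vertices, the remaining neighbour of $v$, and (for the bad face) the private neighbours of the bad $4$-vertex. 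Since $\emptyset\ne S\subseteq int(C_0)$, we have $H\in\F$ and $\sigma(H)<\sigma(G)$, so $C_0$ has a superextension $c$ on $H$ by minimality.

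I would then reinsert the deleted vertices in a carefully chosen order, finishing at $v$, using two elementary facts: a $3$-vertex with three coloured neighbours can always be properly coloured (take colour $3$ if free, else repeat a colour on a neighbour that still has room in its colour class), and a $4$-vertex with four properly coloured neighbours can be coloured unless its neighbours occupy all three colours rigidly. Motivated by Lemma~\ref{le22}(5) (contracting the diagonal pair $v,x$ keeps the graph in $\F$), I would try to assign $v$ and $x$ a common colour; the only danger is that a shared neighbour $s_i$ then sees two neighbours of its own colour, and this is exactly what the retained slack in colour $3$ at $s_i$ (and at the deleted private neighbours) is used to repair. Vertices such as $x$, whose neighbours $s_1,s_2,x'$ are already coloured, are handled immediately by the $3$-vertex fact.

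The crux, and where I expect essentially all of the work, is the subcase in which $v$ resists colouring after everything else is coloured. As in the proof of Lemma~\ref{le29}(4), this forces a rigid pattern: $v$ (with its face-neighbour on the $3$-face) would have to reuse a single colour, say $1$, while the private neighbours of the bad $4$-vertex are pushed onto $\{2,3\}$ and the incident $3$-vertices block the remaining choices. I would break this by recolouring $v$ with $2$ and shifting the bad $4$-vertex, the $3$-vertex $t_2$, and $x'$ using the freedom in colour $3$, verifying that the degree-$\le 1$ bound in colour classes $1$ and $2$ holds simultaneously at $v$, at the bad $4$-vertex, and at $s_1,s_2$. The main obstacle is precisely this simultaneous bookkeeping across the $3$-face and the $4$-face meeting at $v$: the ``bad'' and ``weak'' hypotheses are what guarantee the extra deleted private neighbours that make the recolouring possible, and it is their absence that would otherwise permit the diagonal $3$-vertex $x$ to exist.
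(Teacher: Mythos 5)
Your overall strategy (delete, apply minimality, reinsert) is the right family of argument, but it is missing the one idea the paper's proof actually turns on: before invoking minimality, the two \emph{common} neighbours $s_1,s_2$ of $v$ and $x$ must be \emph{identified} into a single vertex. In the paper's proof (written for the $5$-vertex case, with $f_3=vv_3u_3v_4$ and $S=\{v,v_1,v_2,v_1',v_1''\}$), one takes $H=(G\setminus S)[\{v_3,v_4\}]$; this is legal because $v_3,v_4$ are the \emph{other} diagonal pair of the $4$-face, so Lemma~\ref{le22}(5) gives $H\in\F$, and $\sigma(H)<\sigma(G)$. The superextension of $C_0$ on $H$ then colours $v_3$ and $v_4$ with the \emph{same} colour, which is exactly what makes the reinsertion trivial: $u_3=x$ sees at most two colours (that of $v_3^*$ and that of $u_3'$) and can be properly recoloured, $v$ sees at most two colours (that of $v_3^*$ and that of $v_5$) and can be properly coloured, and then $v_2,v_1',v_1''$ are coloured greedily so that $v_1$ ends with four properly coloured neighbours. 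Note that you cite Lemma~\ref{le22}(5) but for the wrong pair: contracting the diagonal pair $\{v,x\}$ (or merely ``assigning them a common colour'' after the fact) gives you nothing, because the obstruction lives at $s_1,s_2,x'$, whose colours are already frozen in $H$.

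Without the identification, your plan has a concrete failure point at $x$. You keep $s_1,s_2$ (and $x'$) in $H$, so when you come to reinsert the $3$-vertex $x$, its three neighbours may carry three distinct colours with both the $1$-coloured and the $2$-coloured neighbour already having a neighbour of their own colour; then $x$ cannot be coloured at all, and your stated ``elementary fact'' (a $3$-vertex with three coloured neighbours can always be coloured) is simply false in the $(1,1,0)$ setting. The repair you sketch --- recolouring $s_1$ or $s_2$ using ``slack in colour $3$'' --- is not available: at least one of $s_1,s_2$ is a $4^+$-vertex embedded in $H$ whose colour and defects are dictated by the superextension of $C_0$ on $H$, and the extra deleted vertices ($v_1',v_1''$, etc.) all sit on the far side of $v$, giving no leverage near $x'$. (Two smaller problems: your $S$ includes ``the remaining neighbour of $v$,'' i.e.\ $v_5$, which may lie on $C_0$ and hence cannot be deleted --- the paper keeps $v_5$ and simply avoids its colour when colouring $v$ --- and in your bad-$4$-vertex case it is unclear whether the bad $4$-vertex itself is deleted, which matters for the final greedy step, where it must end up with four properly coloured neighbours.) The identification of $s_1,s_2$ is precisely the device that removes all of these difficulties at once, and your proposal does not contain a substitute for it.
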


\begin{proof}
We consider the case  when $d(v)=5$.  The other cases are very similar and simpler.  Let $f_1=v v_1 v_2$ be a bad $(5, 4, 3)$-face with $d(v_1)= 4$, and let $f_3=v v_3 u_3 v_4$ be a $4$-face with $d(u_3)= 3$ in $G$. Let $v'_1, v''_1$ be the two other neighbors of $v_1$ with $d(v'_1)=4$ and $d(v''_1)= 3$. Let  $G'= G \setminus S$ and $H=G'[\{v_3, v_4\}]$, where $S= \{v, v_1, v_2, v'_1, v''_1\}$.  Let $v^*_3$ be the resulting vertex by identifying $v_3$ with $v_4$.  By Lemma~\ref{le22} (5), $H \in \F$.  Since $\sigma(H)< \sigma(G)$, $C_0$ has a superextension $\phi_H$ on $H$.  Based on $\phi_H$,  we color $v_3, v_4$ with the color $\phi_H(v^*_3)$ and recolor properly $u_3$ with a color in $\{1, 2, 3\}\setminus \{\phi_H(v^*_3), \phi_H(u'_3)\}$, where $u'_3$ is the other neighbor of $u_3$ in $G$. Next, properly color $v$ with a color in $\{1, 2, 3\}\setminus \{\phi_H(v^*_3), \phi_H(v_5)\}$,  and properly color $v_2$, $v'_1, v''_1$ in order, and finally color $v_1$ as it has four properly colored neighbors. Thus, $C_0$ has a superextension $\phi_G$ on $G$,  a contradiction.
\end{proof}

\medskip

For a positive integer $n$, let $[n] = \{1, 2, \ldots, n\}$.  For a vertex $v\in int(C_0)$ with $d(v)=k$, let $v_1, v_1, \ldots, v_k$ denote the neighbors of $v$ in a cyclic order. Let $f_i$ be the face with
$vv_i$ and $vv_{i+1}$ as two boundary edges for $i=1, 2, \ldots, k$, where the subscripts
are taken modulo $k$.  A $k$-vertex $v\in int(C_0)$ is {\em poor} if it is incident with $k$ $4$-faces from $F_4$, and {\em rich} otherwise. 

The following is a very useful lemma in the remaining proofs.

\begin{Lemma}\label{l215}((Lemma 3. 10 from~\cite{LLY15b})
Let $v\in int(C_0)$ be a $4$-vertex with $N(v) = \{v_i \, :\,i\in [4]\}$.  If $v$ is incident with two $4$-faces that share exactly an edge,  then no $t$-path joins $v_i$ and $v_{i+2}$ in $G$ for $t\in  \{1, 2, 3, 5\}$,  where the subscripts of $v$ are taken modulo $4$.
\end{Lemma}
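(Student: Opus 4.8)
The plan is to argue in the minimal counterexample $G$ (with $C_0$ the outer cycle) that any such $t$-path, together with the length-$2$ path $v_iv v_{i+2}$ through $v$, closes up into a short \emph{separating} cycle forbidden by the earlier structural lemmas. After relabelling I may assume the two $4$-faces are $f_1=vv_1av_2$ and $f_2=vv_2bv_3$, sharing the edge $vv_2$, and it suffices to treat the two diagonal pairs $\{v_1,v_3\}$ and $\{v_2,v_4\}$. Fix a pair $\{v_i,v_{i+2}\}$ and suppose $P$ is a $t$-path joining them with $t\in\{1,2,3,5\}$. First I would reduce to the case where $P$ is internally disjoint from $\{v,v_{i+1},v_{i+3}\}$: if $P$ runs through one of the remaining neighbours of $v$ or through one of the face-vertices $a,b$, then splitting $P$ at that vertex and recombining with the short connections supplied by $f_1,f_2$ yields a forbidden cycle (or a pair of adjacent triangles), disposing of that degenerate case.

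Under this assumption $C:=P\cup\{v_iv,\,vv_{i+2}\}$ is a cycle of length $t+2$ through $v$. The key point is that $C$ is separating: its two edges at $v$ are $vv_i$ and $vv_{i+2}$, so in the rotation $v_1,v_2,v_3,v_4$ at $v$ the remaining edges $vv_{i+1},vv_{i+3}$ point into the two different regions bounded by $C$; since $v_{i+1}$ and $v_{i+3}$ both exist and lie off $C$, both regions are nonempty. This settles three of the four values at once: for $t=1$, $C$ is a separating triangle and for $t=5$ a separating $7$-cycle, each excluded by Lemma~\ref{le22}(1); for $t=3$, $C$ is a $5$-cycle, excluded outright since $G\in\F$.

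The remaining and genuinely harder case is $t=2$, where $C=vv_ixv_{i+2}$ is only a separating \emph{$4$-cycle}; such cycles are not forbidden but are rigidly constrained by Lemma~\ref{le22}(2), which forces one side of $C$ to consist of exactly two vertices $\{p,q\}$ spanning a triangle with some vertex $z$ of $C$, and forces $C$ to be the unique separating $4$-cycle of $G$. The plan is to locate the face-vertices $a,b$ relative to $C$ (they occupy prescribed sides determined by the rotation at $v$), which pins down the two-vertex side, and then to run a short case analysis on the apex $z$. The choices $z\in\{v_i,v_{i+2}\}$ force an edge from $v_i$ or $v_{i+2}$ to $v_{i+3}$, so the triangle through $v$ on that edge and the triangle $zpq$ share an edge, producing adjacent $3$-cycles and contradicting $G\in\F$; the choice $z=v$ is impossible because $v$ has no neighbour on that side beyond $v_{i+3}$; and $z=x$ makes $v$ and $x$ share three common neighbours, hence produces a second separating $4$-cycle, contradicting the uniqueness clause of Lemma~\ref{le22}(2).

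I expect $t=2$ to be the main obstacle, both because it is the only value not killed immediately by the separating-cycle lemmas and because closing it requires careful planarity bookkeeping: the positions of $a,b$ relative to $C$, and hence the side-counting against Lemma~\ref{le22}(2), differ between the two diagonal pairs, and the $z=x$ subcase needs the embedding argument identifying a second separating $4$-cycle. The preliminary step of moving $P$ off $v,v_{i\pm1},a,b$ is routine but must come first, so that $C$ is genuinely of length $t+2$ and genuinely separating.
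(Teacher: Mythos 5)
The paper offers no proof to compare against: Lemma~\ref{l215} is imported verbatim as Lemma~3.10 of~\cite{LLY15b} and used as a black box, so your attempt can only be judged on its own merits. Judged that way, your separating-cycle skeleton is the natural argument and it does work: closing $P$ through $v$, the rotation at $v$ places $v_{i+1}$ and $v_{i+3}$ in different regions of the resulting $(t+2)$-cycle $C$, so $C$ is separating; $t=3$ dies because $G$ has no $5$-cycles at all, $t=1,5$ die by Lemma~\ref{le22}(1), and $t=2$ falls to Lemma~\ref{le22}(2). Your apex analysis for $t=2$ is essentially correct: $z\in\{v_i,v_{i+2}\}$ yields the adjacent triangles $vzp$ and $zpq$ (where $p$ is the exterior neighbour of $v$), and $z=v$ is impossible because only one neighbour of $v$ lies in $ext(C)$. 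For $z=x$, though, there is a one-line finish you missed: the triangle $zpq$ gives $x\sim q$ and $q\sim p$, so $v\,v_i\,x\,q\,p$ is a $5$-cycle, and no appeal to the uniqueness clause or any embedding bookkeeping is needed.

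There are two places where what you assert is not yet a proof. First, the preliminary reduction cannot be carried out as stated: $v_ivv_{i+2}$ is itself a $2$-path joining $v_i$ and $v_{i+2}$, so paths through $v$ cannot be ``disposed of'' when $t=2$; the lemma only makes sense (and is only ever applied) for paths avoiding $v$, and a careful writeup must adopt that reading explicitly rather than claim to eliminate such paths. Second, even under that reading, the degenerate cases you wave at are a genuine chunk of the proof: for $t\in\{3,5\}$ a path through $v$, $v_{i+1}$ or $v_{i+3}$ splits into sub-paths that close into cycles of lengths $3,4,5,6$ through $v$, and the short closures need the two $4$-faces to finish (e.g.\ $v_1\sim v_2$ forces the adjacent triangles $vv_1v_2$ and $v_1av_2$; $v_1\sim v_4$ forces the $5$-cycle $v_4v_1av_2v$), while a split whose middle edge joins the other diagonal pair must be recycled as the $t=1$ case of the lemma for that pair. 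Likewise, your claim that the positions of $a,b$ ``pin down'' the two-vertex side of the separating $4$-cycle is not automatic: when one of $a,b$ coincides with $x$, the side containing the two $4$-faces can itself consist of exactly two vertices, and ruling this out requires its own small case analysis (each subcase ending in a $5$-cycle or adjacent triangles). All of these cases do close, so your plan is sound and repairable, but as submitted it is a correct outline with the hard bookkeeping left out.
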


\begin{Lemma}\label{l217}
Let $v\in int(C_0)$ be a $4$-vertex incident with a $4$-face $f_i=vv_iu_iv_{i+1}$. Then each of the followings holds, where the subscripts are taken modulo $4$.
\begin{enumerate}
\item If $d(v_i)=d(u_i)=3$,  then $f_{i-1}$ and $f_{i+1}$ are $6^+$-faces. Consequently, if $v$ is poor, then $v$ is not incident with $(3,3,4,4^+)$-faces. 
\item (Lemma 3.11 (1) in \cite{LLY15b}) If $f_{i+1}=vv_{i+1}u_{i+1}v_{i+2}$, then $d(u_i)\geq 4$ or $d(u_{i+1})\geq 4$.
\item (Lemma 3.11 (2) in \cite{LLY15b}) If $f_{i+2}=vv_{i+2}u_{i+2}v_{i+3}$, then $d(u_i)\geq 4$ or $d(u_{i+2})\geq 4$.
\item (Lemma 3.12 from~\cite{LLY15b}) If $v$ is a poor $4$-vertex, then either $d(v_i)\geq 5$ or $d(v_{i+2})\ge 5$.
\end{enumerate}
\end{Lemma}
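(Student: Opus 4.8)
The plan is to run a minimal-counterexample argument: among all pairs $(G,C_0)$ with $G\in\mathcal F$, $C_0$ a triangle or $7$-cycle, and $C_0$ not superextendable, fix one minimizing $\sigma(G)$ and take $C_0$ to bound the outer face, exactly as set up above. Parts (2)--(4) are quoted verbatim from~\cite{LLY15b}, so only part (1) and its consequence need new work. For part (1), I would first record the easy reductions: since $f_{i-1}$ and $f_{i+1}$ each share an edge with the $4$-face $f_i$, Proposition~\ref{pr21} shows that neither is a $3$-face, and since $G\in\mathcal F$ has no $5$-cycle, neither is a $5$-face. Hence it suffices to rule out that $f_{i-1}$ or $f_{i+1}$ is a $4$-face.

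The two cases are genuinely different and I would treat them separately, because the hypothesis $d(v_i)=d(u_i)=3$ is not symmetric in $f_{i-1}$ and $f_{i+1}$: here $v_i$ is the $3$-vertex adjacent to $v$ (incident with the shared edge $vv_i$ of $f_{i-1}$), while $u_i$ is the $3$-vertex opposite $v$. If $f_{i+1}=vv_{i+1}u_{i+1}v_{i+2}$ is a $4$-face, then $v$ is incident with two $4$-faces sharing exactly the edge $vv_{i+1}$; part~(2) of this lemma forces $d(u_{i+1})\ge 4$ (as $d(u_i)=3$), and Lemma~\ref{l215} forbids any $t$-path between the diagonal pair $v_i,v_{i+2}$ for $t\in\{1,2,3,5\}$. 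If instead $f_{i-1}=vv_{i-1}u_{i-1}v_i$ is a $4$-face, then the $3$-vertex $v_i$ has all of its neighbors $v,u_{i-1},u_i$ on the two $4$-faces $f_{i-1},f_i$, and Lemma~\ref{l215} applies to the diagonal pair $v_{i-1},v_{i+1}$.

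In each case I would argue by deletion-and-extension just as in the proof of Lemma~\ref{l214}: delete the two light vertices $v_i,u_i$ (and, when convenient, identify a diagonal pair of one of the $4$-faces, which is legitimate by Lemma~\ref{le22}(5) and keeps the graph in $\mathcal F$), obtaining a graph $H$ with $\sigma(H)<\sigma(G)$. By minimality $C_0$ has a superextension on $H$, which I would lift to $G$ by giving the identified pair a common colour and colouring $v_i$ and $u_i$ last, exploiting that after the deletion $v_i$ sees only its two heavy neighbours and $u_i$ becomes colourable once $v_i$ is fixed. Extending superextends $C_0$ to $G$, contradicting the choice of $(G,C_0)$. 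The main obstacle is the bookkeeping intrinsic to a $(1,1,0)$-colouring: assigning colours to $v_i,u_i$ can place a second same-coloured neighbour on $v$ or another vertex and thereby overload the bounded-degree class $V_1$ or $V_2$. Resolving this is where the structural input is used: the identification forces the diagonal pair to share a colour and so removes one constraint, while the inequality $d(u_{i+1})\ge 4$ from part~(2) together with the absence of short $v_i$--$v_{i+2}$ (resp.\ $v_{i-1}$--$v_{i+1}$) paths from Lemma~\ref{l215} rules out the remaining recolouring conflicts; the few subcases in which $v_i$ or $v_{i+1}$ lies on $C_0$ are controlled by Lemma~\ref{le22}(4).

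Finally, the consequence is a short deduction from part~(1). If $v$ is poor then all four faces incident with $v$ lie in $F_4$, that is, are $4$-faces. Suppose $v$ is incident with a $(3,3,4,4^+)$-face; being a $4$-face it is one of the $f_i=vv_iu_iv_{i+1}$, and since $d(v)=4$ the two adjacent degree-$3$ vertices on it are either $v_i,u_i$ or $u_i,v_{i+1}$. After reversing the orientation of the face if necessary we may assume $d(v_i)=d(u_i)=3$, so part~(1) forces $f_{i-1}$ and $f_{i+1}$ to be $6^+$-faces, contradicting that $v$ is poor. Hence no poor vertex is incident with a $(3,3,4,4^+)$-face.
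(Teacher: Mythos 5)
Your framing is sound in several places: parts (2)--(4) are indeed just citations, the reduction of part (1) to excluding $4$-faces (via Proposition~\ref{pr21} and the absence of $5$-cycles) is correct, your deduction of the ``consequently'' clause from part (1) is exactly right, and you are right --- indeed more careful than the paper, whose proof only treats $f_{i-1}$ --- that the cases ``$f_{i-1}$ is a $4$-face'' and ``$f_{i+1}$ is a $4$-face'' are genuinely asymmetric. The gap is that the heart of the proof, the colouring extension, is only gestured at, and the mechanism you propose demonstrably cannot close it in the $f_{i+1}$ case. After you delete $v_i$ and $u_i$, the only diagonal pairs still available to identify are $(v,u_{i+1})$ and $(v_{i+1},v_{i+2})$ of $f_{i+1}$ (the pairs of $f_i$ involve deleted vertices; note also the internal inconsistency of invoking Lemma~\ref{l215} for the pair $v_i,v_{i+2}$ when $v_i$ has been deleted). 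Neither identification imposes any relation among the colours that $v_i$ and $u_i$ actually see: $v_i$ sees $v$ and its third neighbour $x$ on $f_{i-1}$, while $u_i$ sees $v_{i+1}$ and its outer neighbour $w$, and $u_{i+1},v_{i+2}$ are adjacent to neither $v_i$ nor $u_i$. So nothing rules out $\{c(v),c(x)\}=\{1,2\}=\{c(v_{i+1}),c(w)\}$, in which case the only proper colour for each of the adjacent vertices $v_i,u_i$ is $3$, and the non-proper alternatives (giving one of them colour $1$ or $2$) require the corresponding same-coloured neighbour to have no other neighbour of that colour --- a property of $v_{i+1}$, $w$, $x$ that you have no control over, since their degrees and neighbourhoods are unconstrained. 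The facts you cite to dispose of this, $d(u_{i+1})\ge 4$ from part (2) and the path condition of Lemma~\ref{l215}, simply do not bear on this colouring obstacle. A milder version of the same problem occurs in your $f_{i-1}$ case: the identified pair $(v,u_{i-1})$ has the surviving common neighbour $v_{i-1}$, which may carry the identified colour, and repairing that clash again requires recolouring vertices of unbounded degree.

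The paper's proof is organized precisely to avoid this trap: it deletes nothing and instead identifies the three consecutive neighbours $v_{i-1},v_i,v_{i+1}$ of $v$ into a single vertex. Then every vertex that could clash with the pulled-back colour ($u_{i-1}$, $u_i$, and $v$) is adjacent to the merged vertex, so at most one clash can occur, and that unique clash is repaired by recolouring only the bounded-degree vertices $u_i$ (degree $3$), $v$ (degree $4$, two of whose neighbours keep the common colour) and $v_i$ (degree $3$); the unknown-degree vertex $u_{i-1}$ is never recoloured. Your deletion-based plan inverts this and leaves the final colouring step hostage to vertices you cannot recolour. To repair the proposal you would need to replace the deletion by an identification of the kind the paper uses (for the $f_{i+1}$ case, merging $v_i,v_{i+1},v_{i+2}$, with an argument for why each of the three possible clashes, including the delicate one at $u_{i+1}$, can be fixed), rather than appealing to part (2) and Lemma~\ref{l215}.
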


\begin{proof}
(1) Suppose that $f_{i-1}$ and $f_i$ are 4-faces with $d(u_i)=d(v_i)=3$, where $v_i$'s are neighbors of 4-vertex $v$.  Identify $v_{i-1}, v_i$, and $v_{i+1}$ into one vertex, we get a new graph in $\mathcal{F}$, so the new graph is $(1,1,0)$-colorable.  Now the original graph has a $(1,1,0)$-coloring, unless $u_{i-1}$ has the same color (1 or 2), which by symmetry we assume to be $1$,  as $v_{i-1}, v_i$ and $v_{i+1}$. We uncolor $v_i$ and $v$, and then color $u_i$ and $v$ properly. Clearly, $u_i$ and $v$  are colored 2 or 3. If  $u_i$ and $v$ are colored differently, color $v_i$ with 2; if $u_i$ and $v$ are colored the same, color $v_i$ with an available color.
\end{proof}

Let $v$ be a $5^+$-vertex in $int(C_0)$. For convenience, we use $Q_4(v)$ to denote the set of poor $4$-vertices  in $N(v)\setminus C_0$ that are incident with $(3, 4, 4, 4)$-faces from $F_4$. 

\begin{Lemma}\label{l220}
Let $v$ be a poor $5$-vertex in $G$.  Then
\begin{enumerate}
\item (Lemma 3.13(2) from~\cite{LLY15b}) At most two vertices in $\{u_i\, : \, i\in [5]\}$ are $3$-vertices.
\item (Lemma 3.13(3) from~\cite{LLY15b}) If $d(u_i)= 3$, then either $d(v_{i-1})\geq 5$ or $d(v_{i+2})\geq 5$.
\item (Lemma 3.13(1) from~\cite{LLY15b}) If $d(u_i)=d(v_i)= 3$, then $d(u_{j})\geq 4$ for $j\in [5]\setminus \{i\}$.
\item If $f_i$ is a $(5, 4, 3, 4)$-face, then at most one of $v_i, v_{i+1}$ is in $Q_4(v)$.
\item If $d(v_i)=d(u_i)=d(v_{i+2})=3$, then $d(v_{j})\geq 5$ for $j\in [5]\setminus \{i, i+2\}$.
\item If $f_i$ is a $(5, 3, 4, 4)$-face such that $u_i$ and $v_{i+1}$ are poor $4$-vertices, then $v_{i+1}\not\in Q_4(v)$.
\end{enumerate}
\end{Lemma}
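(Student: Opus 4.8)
Parts (1)--(3) are quoted from~\cite{LLY15b}; it remains to prove (4), (5), and (6). Throughout, $(G,C_0)$ is the minimum counterexample, so any graph $H \supseteq C_0$ obtained from $G$ with $\sigma(H) < \sigma(G)$ has the property that \emph{every} $(1,1,0)$-coloring of $C_0$ superextends to $H$. Since $v$ is poor, each $f_i = vv_iu_iv_{i+1}$ lies in $F_4$, so none of $v_1,\dots,v_5,u_1,\dots,u_5$ lies on $C_0$. In each part I would fix a coloring $\phi_0$ of $C_0$ that (by assumption) fails to superextend to $G$, delete a set $S$ of interior vertices to form $H = G \setminus S$ (which automatically lies in $\F$, being a subgraph; when a contraction is more convenient, Lemma~\ref{le22}(5) and the no-short-path conclusion of Lemma~\ref{l215} keep it in $\F$), superextend $\phi_0$ to $H$ by minimality, and then color the vertices of $S$ to contradict the failure of $\phi_0$. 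The coloring step is the familiar ``color greedily, then repair'': return the vertices of $S$ in a prescribed order, and whenever the next vertex has no available color, read off the unique obstructing pattern on its colored neighbors and destroy it by recoloring an incident $3$-vertex or by interchanging colors $1$ and $2$.

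For (5), assume $d(v_i)=d(u_i)=d(v_{i+2})=3$. Parts (1)--(3) together with the diagonal rule of Lemma~\ref{l213}(2) already give $d(v_{i-1}) \ge 5$ and $d(v_{i+1}),d(v_{i+3}) \ge 4$, so it suffices to upgrade $v_{i+1}$ and $v_{i+3}$ from $4$ to $5$. Supposing $d(v_{i+1}) = 4$ (the case $d(v_{i+3})=4$ being analogous), I would take $S = \{v, v_i, u_i, v_{i+2}, v_{i+1}\}$ and $H = G\setminus S$. The point of deleting the low-degree $v_{i+1}$ as well is that, after superextending $\phi_0$ to $H$, the central vertex $v$ retains only \emph{two} already-colored neighbors, $v_{i+3}$ and $v_{i+4}$: I would first give $v$ a color avoiding $c(v_{i+3})$ and $c(v_{i+4})$, so that $v$ gains no monochromatic edge, and then color the returned $3$-vertices $v_i,u_i,v_{i+2}$ and the $4$-vertex $v_{i+1}$, resolving any obstruction as above. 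This is precisely where the hypothesis $d(v_{i+1}) \le 4$ is consumed: had $v_{i+1}$ been a $5$-vertex it could not have been removed, and $v$ would face three fixed neighbors that may block every color.

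For (4) and (6) the configuration puts a vertex of $Q_4(v)$ on $f_i$, which therefore carries, on its incident $(3,4,4,4)$-face of $F_4$, a pendant $3$-vertex; these pendant $3$-vertices, the $3$-vertex of $f_i$, and the outer neighbors of the poor $4$-vertices are the source of slack, exactly as in the proof of Lemma~\ref{le29}(4). For (4), where $f_i$ is a $(5,4,3,4)$-face with $v_i,v_{i+1} \in Q_4(v)$, I would delete $v$, the $3$-vertex $u_i$, both poor $4$-vertices $v_i,v_{i+1}$, and the pendant $3$-vertices together with the $4$-neighbors on their $(3,4,4,4)$-faces; for (6), where $f_i$ is a $(5,3,4,4)$-face with $v_{i+1} \in Q_4(v)$ and $u_i,v_{i+1}$ both poor, I would delete $v$, the $3$-vertex $v_i$, the poor $4$-vertex $v_{i+1}$, and the pendant $3$-vertex and $4$-neighbor on its $(3,4,4,4)$-face. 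After superextending $\phi_0$ to $H$, I would recolor in the order outer neighbors, pendant $3$-vertices, poor $4$-vertices, then $v$ and the $3$-vertex of $f_i$. Here Lemma~\ref{l217}(4), applied to each poor $4$-vertex, forces a $5^+$-neighbor opposite the $3$-vertex of $f_i$ (for instance $d(u_{i+1}) \ge 5$), which bounds the number of colors reaching the last uncolored vertex; as in Lemma~\ref{le29}(4), the only failure is the pattern in which its two colored mates realize $\{2,3\}$ while $c(v)=1$, and it is broken by recoloring through a pendant $3$-vertex and swapping $1 \leftrightarrow 2$ at $v$.

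The main obstacle is not the graph surgery but this terminal coloring step. Checking $H \in \F$ is routine: plain deletion is harmless, and any contraction used is a single diagonal pair of a $4$-face, legitimate by Lemma~\ref{le22}(5), with Lemma~\ref{l215} forbidding the short paths that could otherwise create a $5$-cycle or adjacent triangles. The delicate work is the endgame, in which the central $5$-vertex (or the last poor $4$-vertex) is the only uncolored vertex and its neighbors appear to forbid all three colors. One must verify that the obstruction is always the single ``all three colors present, with both of the nonindependent colors saturated'' pattern, and that the deleted $3$-vertices---supplied in (5) by the removal of $v_{i+1}$, and in (4) and (6) by membership in $Q_4(v)$---always provide a recoloring or a $1 \leftrightarrow 2$ swap that resolves it. Confirming this in each sub-case, rather than the reductions themselves, is where the real effort lies.
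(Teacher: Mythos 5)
There is a genuine gap: plain deletion cannot work for these reductions, and the local repairs you defer to are not available. Take your own reduction for (5) with $i=1$: $d(v_1)=d(u_1)=d(v_3)=3$, suppose $d(v_2)=4$, and let $S=\{v,v_1,u_1,v_3,v_2\}$, $H=G\setminus S$. Minimality hands you \emph{one} superextension of $\phi_0$ to $H$, over which you have no control. Nothing prevents that coloring from having $c(u_2)=c(v_4)=1$ with $u_2$ and $v_4$ each already carrying a defect inside $H$, and $c(u_3)=c(v_5)=2$ likewise. Then $v_3$, whose colored neighbors are exactly $u_2,u_3$, can only receive color $3$, and $v$, whose colored neighbors are exactly $v_4,v_5$, can only receive color $3$; since $vv_3\in E(G)$ and the color-$3$ class must be independent, no extension exists at all. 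No ordering of $S$, no recoloring of an incident $3$-vertex, and no $1\leftrightarrow 2$ swap can help, because every blocking vertex lies in $H$. The same failure kills your deletion-based plans for (4) and (6): after deleting $v$, its three fixed neighbors $v_{i+2},v_{i+3},v_{i+4}$ may show colors $1,2,3$ with the $1$- and $2$-colored ones defective, leaving $v$ with no color. (Your symmetry claim in (5) is also false: $v_{i+1}$ lies between the two $3$-vertices $v_i$ and $v_{i+2}$ while $v_{i+3}$ does not, and the paper in fact treats the two by different arguments.)

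The missing idea is identification, not deletion. For (5) the paper first gets $d(v_{i+1})\ge 5$ \emph{structurally}: if $d(v_2)=4$, then $v_2$ is a $4$-vertex on the two $4$-faces $f_1,f_2$ sharing the edge $vv_2$, and Lemma~\ref{l217}(2) applied at $v_2$ forces $d(v_1)\ge 4$ or $d(v_3)\ge 4$, a contradiction. For the remaining neighbors it deletes only $\{v,v_5\}$ and \emph{contracts} the pairs $\{u_4,u_5\}$ and $\{v_2,v_4\}$ (Lemmas~\ref{le22}(5) and~\ref{l215} keep the contracted graph in $\F$); the whole point of the contraction is to force two neighbors of $v$ to receive the same color, which is exactly what eliminates the stuck pattern above. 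For (4) the paper contracts $\{u_1,v_1',v_2',v\}$ into one vertex, so that $v$ is colored as part of the smaller graph and its color is automatically compatible with $v_3,v_4,v_5$; only the flexible vertices $v_1,v_2,w$ and the identified $3$-vertex need to be placed afterwards. Finally, (6) needs no coloring argument whatsoever: applying Lemma~\ref{l217}(4) to the poor $4$-vertex $v_{i+1}$ (using $d(u_i)=4$) and to the poor $4$-vertex $u_i$ (using $d(v_i)=3$) puts a $5^+$-vertex on every $4$-face incident with $v_{i+1}$, so none of them is a $(3,4,4,4)$-face and $v_{i+1}\notin Q_4(v)$ by definition. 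Your proposal misses both the contraction device and this short structural route, and the endgame you acknowledge as ``the real effort'' is precisely where it breaks.
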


\begin{proof}

(4) Without loss of generality, assume that $f_1=vv_1u_1v_2$ is a $(5, 4, 3, 4)$-face. Assume further that $v_1, v_2$ are in $Q_4(v)$. By Lemma~\ref{l217}(4),$d(u_2)\geq 5$ and $d(u_5)\geq 5$ as $d(u_1)=3$. Let $N(u_1)=\{v_1, v_2, w\}$ and $N(w)\cap N(v_1)=\{v_1'\}$ and $N(w)\cap N(v_2)=\{v_2'\}$.  It implies that  $u_1v_1v_1
w$ and $u_1v_2v_2'w$ are $(3,4,4,4)$-faces.  So $d(v_1')=d(v_2')=d(w)=d(v_1)=d(v_2)=4$.  Let $H= G[\{u_1, v_1', v_2', v\}]$. By Lemmas~\ref{le22} (5) and~\ref{l215},  $H\in \mathcal{F}$.  Let $c$ be a coloring of $(H,C_0)$. Let $v'$ be the resulting vertex of the identification. In $G$, color $u_1, u_2, u_3$ with $c(v')$, and properly color $v_1, v_2, w$ in the order, and finally color $u_1$.  Thus, $(G, C_0)$ is superextendable, a contradiction.

(5) By symmetry, assume that $d(v_1)=d(u_1)=d(v_3)=3$. By Lemma~\ref{l214} (2), $d(v_2)\ge 4$, and furthermore, by Lemma~\ref{l217} (2), $d(v_2)\geq 5$.

We show that $d(v_5)\geq 5$.  Suppose otherwise that $d(v_5)\le 4$. Then by Lemma~\ref{l213}(2), $d(v_5)=4$.  Let $G'=G-\{v,v_5\}$ and $H=G'[\{u_4, u_5\}, \{v_2, v_4\}]$.  By Lemmas~\ref{le22} (5) and~\ref{l215},  $H\in \mathcal{F}$. Then $(H,C_0)$ is superextendable, and let $c$ be a coloring of $(H,C_0)$.  In $G$, color $v_2, v_4$ and $u_4, u_5$ with the colors of the identified vertices, respectively, then properly recolor $v_5, u_1, v_1, v_3$ in the order. Let $c'$ be the resulting coloring of $G-v$.  Now we color $v$.  If $c'(v_2)=c'(v_4)=3$, then $v$ can be colored, as the other three colored neighbors are all properly colored, so we may assume that $c'(v_2)=c'(v_4)=1$. If $c'(v_1)=1$, then clearly $v$ can be colored; If $c'(v_1)=3$, then we  uncolor $v_1$ and color $v$ with $3$ (if $c'(v_3), c'(v_5)\not=3$) or with $2$ (if $c'(v_3)=3$ or $c'(v_5)=3$), and now $v_1$ can be colored, as $c'(u_1)\in \{1,2\}$ and $u_1$ is properly colored, a contradiction.


Similarly, we have $d(v_4)\ge 5$.  


(6)  As $v_{i+1}$ is a poor $4$-vertex and $d(u_i)=4$, by Lemma~\ref{l217}(4), $d(u')\ge 5$, where $u'$ is the diagonal vertex to $v$ on the $4$-face incident with $v_{i+1}$; similarly, $u_i$ is a poor $4$-vertex and $d(v_i)=3$, by Lemma~\ref{l217}(4), $d(u_1')\ge 5$, where $u_1'$ is the diagonal vertex to $v_2$ on the $4$-face incident with $u_1$. It follows that no $4$-face incident with $v_{i+1}$ is a $(3,4,4,4)$-face, so $v_{i+1}\not\in Q_4(v)$.
\end{proof}

\section{Discharging procedure}
In this section, we prove the main theorem by a discharging argument.

Let the initial charge of a vertex $v$ be $\mu(v)=2d(v)-6$,  the initial charge of a face $f\not=C_0$ be $\mu (f)=d(f)-6$, and $\mu(C_0)= d(C_0)+6$. By Euler's formula,  $\sum_{x\in V\cup F}\mu (x)=0$.   

A $(3, 4, 4, 5^+)$- or $(3, 4, 5^+, 4)$-face $f\in F_4$ is {\em superlight} if both $4$-vertices on $b(f)$ are poor and {\em light} otherwise.


\medskip

The following are the discharging rules:

\begin{enumerate}[(R1)]

\item[(R1)]  Let $v\in int(C_0)$ with $d(v)=4$ and $f\in F_3\cup F_4$ be a face incident with $v$.
\begin{enumerate}
\item[(R1.1)] When $f\in F_3$,  $f$ gets $1$ from $v$, unless $v$ is incident with $f$ and a $(3,4,4)$-face $f'\in F_3$, in which case, $v$ gives $\frac{5}{4}$ to $f'$ and $\frac{3}{4}$ to $f$.  

\item[(R1.2)] When $f\in F_4$, $f$ gets $1$ from $v$ if it is a $(4, 3, 3, 4^+)$-face,
$\frac{2}{3}$ if it is a $(4, 4, 4, 3)$-face or $v$ is rich,  and $\frac{1}{2}$ otherwise.
\end{enumerate}

\item[(R2)] Let $v\in int(C_0)$ with $d(v)\ge 5$.
\begin{enumerate}

\item[(R2.1)] Let $f=vuw$ be a $3$-face in $F_3$ incident with $v$.
\begin{enumerate}[(R2a)]
\item Let $f$ be a $(5^+, 3,3)$-face.  Then $v$ gives $2$ to $f$.

\item Let $f$ be a $(5^+, 4, 3)$-face. Then $v$ gives $\frac{9}{4}$ to $f$  if $u$ is bad and $w$ is weak; $2$ to $f$ if $u$ is not bad and $w$ is weak;  $\frac{7}{4}$ to $f$ if $u$ is bad and $w$ is strong; $\frac{3}{2}$ to $f$ if $u$ is not bad and $w$ is strong. 

\item Let $f$ be a $(5^+, 5^+, 3)$-face. Then $v$ gives $\frac{5}{4}$ to $f$ if $v$ is weak, and $\frac{7}{4}$ otherwise.

\item Let $f$ be a $(5^+, 4^+, 4^+)$-face. Then $v$ gives $\frac{3}{2}$ to $f$ if both $u$, $w$ are bad $4$-vertices; $\frac{5}{4}$ to $f$ if exactly one of $u$ and $w$ is a bad $4$-vertex; $1$ to other $(5^+, 4^+, 4^+)$-faces.
\end{enumerate}

\item[(R2.2)] For each $4$-face $f\in F_4$ incident with $v$, $v$ gives $1$ to $f$ if $f$ is a $(5^+, 4^+, 3, 3)$-face or a superlight $(3,4,4,5^+)$- or $(3,4,5^+,4)$-face;  $\frac{5}{6}$ to $f$ if $f$ is a light $(3,4,4,5^+)$- or $(3,4,5^+,4)$-face;  $\frac{3}{4}$ to $f$ if $f$ is a $(3, 4, 5^+, 5^+)$-face or a $(3, 5^+, 4, 5^+)$-face; $\frac{1}{2}$ to $f$ if $f$ is a $(4^+, 4^+, 4^+, 5^+)$-face.

\item[(R2.3)] If $Q_4(v)\neq \emptyset$, then $v$ gives $\frac{1}{6}$ to each $4$-vertex in $Q_4(v)$.
\end{enumerate}

\item[(R3)] Each $4^+$-vertex sends $\frac{1}{2}$ to each of its pendant 3-faces in $F_3$.

\item[(R4)] Let $v\in C_0$. Then $v$ gives $3$ to
each incident $3$-face from $F'_3$; $\frac{3}{2}$ to each incident face from $F''_3$;
$1$ to each incident $4$-face from $F''_4$.

\item[(R5)] $C_0$ gives $2$ to each $2$-vertex on $C_0$; $\frac{3}{2}$ to each
$3$-vertex on $C_0$;  $1$ to each $4$-vertex on $C_0$; and $\frac{1}{2}$ to each $5$-vertex on $C_0$.  In addition,  if $C_0$ is a $7$-face with six $2$-vertices, then it gets $1$ from the incident face.
\end{enumerate}

We will show that each $x\in F(G)\cup V(G)$ has final  charge $\mu^*(x)\ge 0$ and at least one face has positive charge, to reach a contradiction.

As $G$ contains no $5$-faces, and $6^+$-faces  other than $C_0$ are not involved in the discharging procedure, we will check the final charge of the $3$- and $4$-faces other than $C_0$ first.

\begin{Lemma}\label{le31}
Let $f$ be a $i$-face in $F(G)\setminus C_0$ for $i=3, 4$. Then $\mu^*(f)\geq 0$.
\end{Lemma}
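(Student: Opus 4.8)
The plan is to prove $\mu^*(f)\ge0$ by checking every $3$- and $4$-face, grouped by how $f$ meets the outer cycle $C_0$. Since $\mu(f)=d(f)-6$, a $3$-face begins with $-3$ and a $4$-face with $-2$, so I must show that each $3$-face gains at least $3$ and each $4$-face at least $2$. The first step is to record which rules can feed a face: a $3$-face receives only through (R1.1), (R2.1), (R3) and (R4), a $4$-face only through (R1.2), (R2.2) and (R4), and no rule withdraws charge from a $3$- or $4$-face except the lone clause in (R5). The boundary families are then immediate. By Lemma~\ref{l213}(1), $F_4'=\emptyset$; by Lemma~\ref{le22}(3) no $3$-face has all three vertices on $C_0$; and by (R4) each face of $F_3'$, $F_3''$ and $F_4''$ already receives $3$, $3$ and $2$ from its vertices on $C_0$ (while (R1) and (R2) act only on $F_3$- and $F_4$-faces), so all of these reach $\mu^*\ge0$.

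The interior $3$-faces $f\in F_3$ form the first substantial case. Proposition~\ref{pr21} gives minimum degree $3$ off $C_0$, and Lemma~\ref{le27}(2) removes the $(3,3,3)$- and $(3,3,4)$-faces, leaving $(3,3,5^+)$, $(3,4,4)$, $(3,4,5^+)$, $(3,5^+,5^+)$ and the faces with all vertices of degree $\ge4$. For each I would add the relevant subrule of (R2.1), the contribution of (R1.1), and the pendant rule (R3); the classifications \emph{bad}, \emph{weak} and \emph{strong} are exactly the bookkeeping that makes these totals meet $3$. For instance a $(3,4,5^+)$-face receives $9/4$, $7/4$, $2$ or $3/2$ from its $5^+$-vertex according to whether its $4$-vertex is bad and its $3$-vertex is weak or strong, gets $3/4$ or $1$ from the $4$-vertex by (R1.1), and gets a further $1/2$ from a strong $3$-vertex by (R3); these sum to $3$. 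The $(3,3,5^+)$-face uses Lemma~\ref{le27}(1) to guarantee its two pendant contributions, the $(3,5^+,5^+)$-face uses Lemma~\ref{l210}(1) to forbid the all-weak configuration in which the sum would fall short, and the all-$4^+$ faces are easy since every incident vertex gives at least $1$.

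For the interior $4$-faces $f\in F_4$ I would begin with Lemma~\ref{l213}(2): each diagonal pair contains a $4^+$-vertex, so $f$ has at most two $3$-vertices and, when it has two, they are adjacent on $f$. Casing on $(4,3,3,4^+)$, $(4,4,4,3)$, $(3,4,4,5^+)$, $(3,4,5^+,5^+)$, $(4^+,4^+,4^+,5^+)$ and $(4^+,4^+,4^+,4^+)$, I would sum the contributions of (R1.2) and (R2.2). Here the \emph{superlight}/\emph{light} split for $(3,4,4,5^+)$-faces and the \emph{poor}/\emph{rich} dichotomy for $4$-vertices are precisely what forces the totals to reach $2$: a light face collects $5/6+2/3+1/2$, a superlight one $1+1/2+1/2$, a $(4,4,4,3)$-face $3\cdot(2/3)$, and so on. Lemmas~\ref{l217} and~\ref{l220} supply the $5^+$-neighbours and the poorness/richness that these counts require.

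I expect the main obstacle to be the $(3,4,4)$-face in $F_3$, and more generally every tight $3$-face whose deficit can only be closed by (R3). A $(3,4,4)$-face has no $5^+$-vertex to draw on, so its whole charge of $3$ must be assembled as $5/4$ from each of its two (necessarily \emph{bad}) $4$-vertices through the favourable split in (R1.1), together with $1/2$ from the outer neighbour of its $3$-vertex through (R3). Verifying this forces me to control the local structure at a bad $4$-vertex and at the pendant $3$-vertex, which is exactly the content of Lemma~\ref{le27}(3)--(4); in particular I must rule out the outer neighbour being a low-degree vertex of $C_0$, using the fact that a $2$-vertex of $C_0$ has both of its neighbours on $C_0$ and hence cannot be such an outer neighbour, together with Lemma~\ref{le22}(3)--(4) to settle the remaining boundary adjacencies. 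The same boundary difficulty recurs in the strong subcases of the $(3,3,5^+)$-, $(3,4,5^+)$- and $(3,5^+,5^+)$-faces and is handled identically. Finally I would dispose of the single withdrawal in (R5) --- a $7$-face $C_0$ carrying six $2$-vertices takes $1$ from an incident face --- by checking that this incident face is a $6^+$-face, so that no $3$- or $4$-face loses charge and the bound is unaffected.
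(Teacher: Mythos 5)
Your plan coincides with the paper's proof in all essentials: the same split by $|b(f)\cap C_0|$, the same use of (R4) for $F_3'$, $F_3''$, $F_4''$ and of Lemma~\ref{l213}(1) to kill $F_4'$, and the same case analysis over face types driven by Lemmas~\ref{le27}, \ref{l210}, \ref{l213}, \ref{l217} and~\ref{l220}. But there is one genuine gap: your dismissal of the interior $3$-faces with all vertices of degree at least $4$ by the claim that ``every incident vertex gives at least $1$'' is false. Under (R1.1), a \emph{bad} $4$-vertex --- one incident with a $(3,4,4)$-face of $F_3$ --- sends $\frac{5}{4}$ to that face and only $\frac{3}{4}$ to the other $3$-face it lies on. Thus on a $(4,4,5^+)$-face both $4$-vertices can be bad and contribute only $\frac{3}{4}$ each; the face closes its deficit only because rule (R2d) makes the $5^+$-vertex pay $\frac{3}{2}$ when both $4$-vertices are bad and $\frac{5}{4}$ when exactly one is, giving $-3+2\cdot\frac{3}{4}+\frac{3}{2}=0$ and $-3+\bigl(\frac{3}{4}+1\bigr)+\frac{5}{4}=0$. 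Likewise a $(4,4,4)$-face is not ``easy'': one must invoke Lemma~\ref{le27}(5) to rule out bad $4$-vertices on it (a bad $4$-vertex incident with a second face of $F_3$ must have a $5^+$-neighbor on that face, impossible here), and only then does each vertex give $1$. Your sketch never invokes (R2d) or Lemma~\ref{le27}(5), so this whole family of faces is left uncovered, and followed literally your accounting would be wrong exactly where the rules were designed to be graduated.

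A smaller remark in the other direction: your closing paragraph is more careful than the paper on two points --- you notice that (R3) pays the pendant $\frac{1}{2}$ only from $4^+$-vertices, so the ``strong'' subcases in which the outer neighbor lies on $C_0$ (e.g.\ in the $(3,3,5^+)$- and $(3,4,4)$-cases) need a separate word, and you verify that the charge withdrawn by (R5) comes from a $6^+$-face rather than from a $3$- or $4$-face. These refinements are welcome, but they do not repair the missing argument for the all-$4^+$ triangles.
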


\begin{proof}
Suppose that $d(f)=3$ and $f=v u w$ with $d(v)\leq d(u)\leq d(w)$.   By Lemma~\ref{le22} (3), $|b(f)\cap C_0|\leq 2$. If $|b(f)\cap C_0|= 2$,  then $f\in F''_3$, by (R4), $\mu^*(f)\geq -3+2\times \frac{3}{2}=0$; if $|b(f)\cap C_0|= 1$,  then $f\in F'_3$, by (R4), $\mu^*(f)\geq -3+3=0$. Hence, let $|b(f)\cap C_0|= 0$.  By Proposition~\ref{pr21}, $d(v)\geq 3$.

Assume first that $d(v)=3$. If $f$ is a $(3, 3, a)$-face, by Lemma~\ref{le27} (2), $a\geq 5$ and the outer neighbors  of $u, v$ are of degree at least $4$ or  on $C_0$, then by (R2a) and (R3),  $\mu^*(f)\geq -3+2\times \frac{1}{2}+2=0$.  If $f$ is a $(3, 4, 4)$-face, by Lemma~\ref{le27} (4), the third neighbor of $v$ is a $4^+$-vertex or  on $C_0$, then by (R1.1) and (R3), $\mu^*(f)\geq -3+2\times \frac{5}{4}+\frac{1}{2}=0$.  Now let $f$ be a $(3, 4, 5^+)$-face. Then by (R1.1) and (R2b), $\mu^*(f)\ge -3+\frac{9}{4}+\frac{3}{4}=0$  if $v$ is weak and $u$ is bad; $\mu^*(f)\ge -3+\frac{7}{4}+\frac{3}{4}+\frac{1}{2}=0$ if $v$ is strong and $u$ is bad;  $\mu^*(f)\ge -3+2+1=0$ if $v$ is weak and $u$ is not bad;  $\mu^*(f)\ge -3+\frac{3}{2}+1+\frac{1}{2}=0$ if $v$ is strong and $u$ is not bad. 

Assume that $d(v)=4$. Then $d(w)\geq d(u)\geq 4$.  If $f$ is a $(4, 4, 4)$-face, then by  Lemma~\ref{le27} (5),  none of the $4$-vertices on $f$ can be bad,  thus by (R1.1), $\mu^*(f)\geq -3+3\times 1=0$. Now assume that $f$ is a $(4, 4^+, 5^+)$-face.  In this case,  if $v, u$ are two bad $4$-vertices, 
then by (R1.1) and (R2c), $f$ receives at least $\frac{3}{2}$ from $w$ and at least $\frac{3}{4}$ from each of $v$ and $u$, thus  $\mu^*(f)\geq -3+2\times \frac{3}{4}+\frac{3}{2}=0$; if one of $v, u$ is not bad, then by (R1.1) and (R2c),  $w$ gives at least $\frac{5}{4}$ to $f$ and $u, v$ give at least $(\frac{3}{4}+1)$ to $f$, then  $\mu^*(f)\geq -3+(\frac{3}{4}+1)+\frac{5}{4}=0$; for other cases, by (R1.1), and (R2c), $f$ receives at least $1$ from each vertex on $b(f)$, thus  $\mu^*(f)\geq -3+3\times 1=0$.

Finally, let $d(v)\ge 5$.  It follows that $f$ is a $(5^+, 5^+, 3)$-face, then by Lemma~\ref{l210} (1),  at most two of the three vertices are weak, so by (R2c) and (R3),  $\mu^*(f)\geq -3+\min\{\frac{5}{4}+ \frac{7}{4}, 2\times \frac{7}{4}, 2\times\frac{5}{4}+\frac{1}{2}\}\ge 0$.

\medskip
Suppose that $d(f)= 4$ and $f=vuwx$.  By Lemma~\ref{le22} (4), $|b(f)\cap C_0|\leq 2$. If $|b(f)\cap C_0|= 2$,  then $f\in F''_4$, by (R4), $\mu^*(f)\geq -2+2\times 1=0$. By Lemma~\ref{l213}(1)  $F'_4=\emptyset$. Hence, assume that $|b(f)\cap C_0|= 0$.  By Proposition~\ref{pr21}, $d(z)\geq 3$ for each $z\in b(f)$.  By Lemma~\ref{l213} (2), if $d(z)=3$ for some $z\in b(f)$, then its diagonal vertex on  $b(f)$ is a $4^+$-vertex.

If $f$ is a $(3, 3, 4^+, 4^+)$-face, then by (R1.2) and (R2.2), $\mu^*(f)\geq -2+2\times 1=0$. If $f$ is a $(3, 4, 4, 4)$-face, then by (R1.2), $\mu^*(f)\geq -2+3\times \frac{2}{3}=0$. If $f$ is a $(3, 4^+, 5^+, 5^+)$-face or $(3, 5^+, 4^+, 5^+)$-face, then by (R2.2), $\mu^*(f)\ge -2+2\cdot \frac{3}{4}+\frac{1}{2}=0$.  
If $f$ is a $(4^+, 4^+, 4^+, 4^+)$-face, then by (R1.2) and (R2.2), $\mu^*(f)\geq -2+4\times \frac{1}{2}=0$.
Finally, let $f$ be a $(3, 4, 4, 5^+)$-face or $(3, 4, 5^+, 4)$-face. If $f$ is superlight, then $f$ gets $1$ from the $5^+$-vertex on $b(f)$ and at  least $\frac{1}{2}$ from each $4$-vertex on $b(f)$,  thus $\mu^*(f)\geq -2+1+2\times \frac{1}{2}=0$. Otherwise, $f$ is light, by (R2.2),  $f$ receives $\frac{5}{6}$ from the $5^+$-vertex,  $\frac{2}{3}$ from a rich $4$-vertex and  at least $\frac{1}{2}$ from the other $4$-vertex on $b(f)$,  then $\mu^*(f)\geq -2+\frac{5}{6}+\frac{2}{3}+\frac{1}{2}=0$.
\end{proof}

\medskip

Let $v$ be a $k$-vertex in $int(C_0)$. Let $t_i$ be the  number of $i$-faces incident with $v$ in $F_i$ for $i\in \{3,4\}$. Let $t_p$  be the number of pendant $3$-faces adjacent to $v$. By Proposition~\ref{pr21}, 
\begin{equation}\label{eq1}
t_3\leq \lfloor \frac{k}{2}\rfloor,\; \text{and}\;\ t_4\leq \text{max}\{0, k-2t_3-t_p-1\} \text{ if $t_4>0$.}
\end{equation}


\begin{Lemma}\label{le32}
Let $v\in int(C_0)$ be a $4$-vertex. Then $\mu^*(v)\geq 0$.
\end{Lemma}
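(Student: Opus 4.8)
The plan is to start from the initial charge $\mu(v)=2d(v)-6=2$ and show that the net charge $v$ sends out — discharged through (R1.1), (R1.2), (R3), less whatever it receives through (R2.3) — never exceeds $2$. I would organize the whole argument around the triple $(t_3,t_4,t_p)$ constrained by \eqref{eq1}, splitting first according to whether $v$ is poor.

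Suppose first $v$ is \emph{rich}, so $t_4\le 3$ by \eqref{eq1}, and argue by cases on $t_3$. If $t_3=2$, the two triangles are nonadjacent and occupy all four neighbours, forcing $t_4=t_p=0$, and (R1.1) sends exactly $\frac54+\frac34=2$. If $t_3=1$, (R1.1) sends $1$ to the triangle and at most $1$ remains; the controlling observation here is that a neighbour carrying a pendant $3$-face must have a $6^+$-face on each side of the connecting edge, since otherwise a $3$-face and a $4$-face would share an edge, against Proposition~\ref{pr21}; this both caps $t_p$ and, via \eqref{eq1}, limits the remaining $4$-faces. If $t_3=0$, the charge flows only to $4$-faces and pendant $3$-faces, and the crux is that a $(4,3,3,4^+)$-face $f_i$ (the costliest $4$-face, at $1$) forces $f_{i-1}$ and $f_{i+1}$ to be $6^+$-faces by Lemma~\ref{l217}(1). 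Consequently two such faces can sit only in opposite positions and cannot coexist with three incident $4$-faces, which keeps the outlay at $\le 2$ in every configuration permitted by \eqref{eq1}.

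Now suppose $v$ is \emph{poor}, i.e.\ all four incident faces are $4$-faces from $F_4$. Lemma~\ref{l217}(1) immediately excludes any $(4,3,3,4^+)$-face, since such a face would force an incident $6^+$-face. Hence each incident face costs $\frac23$ (if it is a $(4,4,4,3)$-face) or $\frac12$, and writing $a$ for the number of $(4,4,4,3)$-faces, $v$ sends out exactly $2+\tfrac{a}{6}$. The whole difficulty is to recover $\tfrac{a}{6}$ through (R2.3), and this poor case is the main obstacle.

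I would resolve it as follows. By Lemma~\ref{l217}(4) each diagonal pair of neighbours of $v$ contains a $5^+$-vertex, so $v$ has at least two $5^+$-neighbours and at most two neighbours of degree $\le 4$. Since every vertex of a $(4,4,4,3)$-face has degree $\le 4$, its two corner-neighbours of $v$ must be two \emph{consecutive} neighbours of degree $\le 4$; there is at most one such consecutive pair among the four neighbours, so $a\le 1$. When $a=1$, $v$ is incident with a $(3,4,4,4)$-face, hence $v\in Q_4(u)$ for every $5^+$-neighbour $u$ lying in $int(C_0)$, and receives $\tfrac16$ from each such $u$ by (R2.3). To see that at least one $5^+$-neighbour lies inside $C_0$, note that two neighbours of $v$ on $C_0$ are, by Lemma~\ref{le22}(3), forced to be adjacent on $C_0$, whence they create a triangle at $v$; by Lemma~\ref{le22}(1) this triangle bounds a $3$-face incident with $v$, contradicting poorness. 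Thus $v$ has at most one neighbour on $C_0$, leaving at least one of its two $5^+$-neighbours in $int(C_0)$. This supplies the needed $\tfrac16$, so $\mu^*(v)\ge 0$ in the poor case as well, completing all cases.
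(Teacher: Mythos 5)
Your overall framework (case analysis on $(t_3,t_4,t_p)$, rich versus poor) is the same as the paper's, and your poor case is sound --- in fact somewhat more careful than the paper's, since you explicitly justify both that at most one incident face is a $(4,4,4,3)$-face and that a $5^+$-neighbour of $v$ lies in $int(C_0)$, so that (R2.3) really delivers the needed $\frac16$. The genuine gap is in the rich case with $t_3\ge 1$: you assert that (R1.1) sends only $1$ to a single incident triangle. That is false when $v$ is \emph{bad}, i.e.\ incident with a $(3,4,4)$-face from $F_3$: as (R1.1) is used throughout the paper (see Lemma~\ref{le31}, where a $(3,4,4)$-face collects $\frac54$ from \emph{each} of its two $4$-vertices), a bad $4$-vertex owes $\frac54$ to that triangle. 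With only $\frac34$ left, your ``at most $1$ remains'' bookkeeping collapses, and the two dangerous configurations are exactly the ones your argument never excludes: a bad $v$ also incident with a $(4,3,3,4^+)$-face would send $\frac54+1=\frac94>2$, and a bad $v$ with two pendant $3$-faces would send $\frac54+2\cdot\frac12=\frac94>2$. The paper kills these with reducibility lemmas you never invoke: Lemma~\ref{l214} (the diagonal vertex of a bad $4$-vertex on an incident $4$-face is a $4^+$-vertex, so no $(4,3,3,4^+)$-face can occur) and Lemma~\ref{le27}(3) (a bad $4$-vertex has at most one $3$-neighbour off its triangle, so $t_p\le 1$). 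Neither your pendant-face observation nor \eqref{eq1} substitutes for these: both bad configurations above are perfectly consistent with Proposition~\ref{pr21} and the face-counting constraints, and are ruled out only by a coloring (reducibility) argument.

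The same oversight touches your $t_3=2$ case: ``sends exactly $\frac54+\frac34=2$'' presumes that at most one of the two triangles is a $(3,4,4)$-face. If both were, $v$ would owe $\frac54$ to each, i.e.\ $\frac52>2$; this is excluded only by Lemma~\ref{le27}(5), which you do not cite. So the rich case needs to be redone: split according to whether $v$ is bad, and import Lemmas~\ref{le27}(3), \ref{le27}(5) and~\ref{l214} to dispose of the bad subcases, exactly as the paper does.
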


\begin{proof}
If $N(v)\cap C_0\not=\emptyset$, then $t_3\le 1$, thus $\mu^*(v)\ge 2-\max\{\frac{5}{4}+\frac{1}{2}, 2\cdot 1, 3\cdot \frac{1}{2}\}\ge 0$. So, let $N(v)\cap C_0=\emptyset$. Clearly, $t_3\le 2$.  

If $t_3= 2$, then by Lemma~\ref{le27} (5), at most one of the triangles is a $(3,4,4)$-face, thus by (R1.1), $\mu^*(v)\geq 2-\max\{\frac{5}{4}+\frac{3}{4}, 2\cdot 1\}=0$. If $(t_3, t_4)=(1,1)$, then when $v$ is not bad, by (R1.1) and (R1.2), $v$ gives at most one to each of the incident faces, thus $\mu^*(v)\ge 2-2\cdot 1=0$, and when $v$ is bad,  $v$ cannot be incident with a $(3, 3, 4, 4^+)$-face by Lemma~\ref{l214}, then by (R1.1) and (R1.2), $\mu^*(v)\geq  2-\frac{5}{4}-\frac{2}{3}=\frac{1}{12}>0$.  Let $(t_3, t_4)=(1,0)$.  Then $0\le t_p\leq 2$. By Lemma~\ref{le27} (3), at least one of the other neighbors of $v$ is a $4^+$-vertex or in $C_0$ when $v$ is bad, thus by (R1.1) and (R3), $\mu^*(v)\geq 2-\max\{\frac{5}{4}+\frac{1}{2}, 1+2\times\frac{1}{2}\}=0$.

Now, we assume that $t_3= 0$.    If $t_p\ge 2$, then $t_4\leq 1$, so by (R1.2) and (R3), $\mu^*(v)\geq 2- \max\{4\cdot \frac{1}{2}, 1+2\times \frac{1}{2}\}=0$. Assume that $t_p=1$ and $t_4=2$. Let $v$ be incident with $4$-faces $f_3=v v_2 u_2 v_3$ and  $f_4=v v_3 u_3 v_4$ in $F_4$.  By Lemmas~\ref{l213} (2) and~\ref{l217},  at most two of the vertices in $\{v_2, u_2, v_3, u_3, v_4\}$ are $3$-vertices, and when $d(v_3)=3$, none of the other vertices is a $3$-vertex,  then by (R1.2) and (R3), $v$ gives at most $\max\{2\cdot\frac{2}{3}, 1+\frac{1}{2}\}=\frac{3}{2}$ to $f_3$ and $f_4$, thus $\mu^*(v)\geq 2-\frac{3}{2}-\frac{1}{2}=0$.

Lastly, let $t_3=t_p=0$. If $t_4\le 2$, by (R1.2), $\mu^*(v)\geq 2-2\times 1=0$.  Let $t_4= 3$.  If $v$ is not incident with a $(4, 3, 3, 4^+)$-face, then by (R1.2), $\mu^*(v)\geq 2-3\times \frac{2}{3}=0$;  If  $v$ is incident with a $(4, 3, 3, 4^+)$-face, then by Lemma~\ref{l217}, the other incident $4$-faces are $(4, 4^+, 4^+, 4^+)$-faces, so by (R1.2), $\mu^*(v)\geq 2-1-2\times \frac{1}{2}=0$.  Hence assume that $t_4= 4$, that is, $v$ is poor.   By Lemma~\ref{l217} (4), $v$ is adjacent to at least two $5^+$-vertices, and without loss of generality, let $d(v_3), d(v_4)\ge 5$. By Lemma~\ref{l217} (1), $v$ is not incident with $(3,3,4,4^+)$-faces.  Thus, if $v$ is not incident with $(3,4,4,4)$-faces, then by (R1), $\mu^*(v)\ge 2-4\times \frac{1}{2}=0$, and if $v$ is incident with a $(3,4,4,4)$-face, then by (R2.3), $v$ gets $\frac{1}{6}$ from each of its $5^+$-neighbors, so by (R1) and (R2.3), $\mu^*(v)\geq 2-3\times \frac{1}{2}-\frac{2}{3}+2\cdot \frac{1}{6}>0$.
\end{proof}


\begin{Lemma}\label{le33}
Let $v\in int(C_0)$ be a $k$-vertex with $k\geq 5$. If $u\in Q_4(v)$, then one of the $4$-faces that contain $uv$ as an edge contains no $3$-vertices or is a $(3, 5^+,4,5^+)$-face.
\end{Lemma}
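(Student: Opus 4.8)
The plan is to exploit the strong local rigidity forced by $u$ being a \emph{poor} $4$-vertex adjacent to the high-degree vertex $v$. Since $u\in Q_4(v)$, the vertex $u$ has degree $4$ and every face incident with $u$ is a $4$-face of $F_4$. First I would fix notation by writing the neighbors of $u$ in cyclic order as $v=u^{(1)},u^{(2)},u^{(3)},u^{(4)}$ and the four incident $4$-faces as $g_i=u\,u^{(i)}\,w_i\,u^{(i+1)}$ (subscripts mod $4$), where $w_i$ is the vertex of $g_i$ diagonal to $u$. With this labelling the two faces sharing the edge $uv$ are exactly $g_1=u\,v\,w_1\,u^{(2)}$ and $g_4=u\,u^{(4)}\,w_4\,v$, and the goal becomes: show at least one of $g_1,g_4$ either contains no $3$-vertex or is a $(3,5^+,4,5^+)$-face. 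The central observation is that on both $g_1$ and $g_4$ the degree-$4$ vertex $u$ and the degree-$\geq 5$ vertex $v$ sit in \emph{adjacent} positions (they share $uv$); consequently the vertex diagonal to $u$ is $w_1$ (resp.\ $w_4$), while the vertex diagonal to $v$ is $u^{(2)}$ (resp.\ $u^{(4)}$).

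Next I would invoke Lemma~\ref{l217}(4) applied to the poor $4$-vertex $u$. Taking $i=1$ there is automatic, since $d(u^{(1)})=d(v)\geq 5$; taking $i=2$ yields the genuinely useful dichotomy $d(u^{(2)})\geq 5$ or $d(u^{(4)})\geq 5$. This single inequality drives the whole argument. Suppose first that $d(u^{(2)})\geq 5$. On $g_1=u\,v\,w_1\,u^{(2)}$ three of the four degrees are already pinned: $d(u)=4$, $d(v)\geq 5$, $d(u^{(2)})\geq 5$, so the only vertex that could possibly have degree $3$ is $w_1$. If $w_1$ is not a $3$-vertex, then $g_1$ contains no $3$-vertex and we are done. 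If $w_1$ is a $3$-vertex, then, because $w_1$ is diagonal to $u$, the cyclic degree pattern of $g_1$ is $(4,5^+,3,5^+)$, which is precisely a $(3,5^+,4,5^+)$-face, and again we are done.

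The case $d(u^{(4)})\geq 5$ is handled identically, with $g_4$ and $w_4$ playing the roles of $g_1$ and $w_1$: the only possible $3$-vertex is $w_4$, which is diagonal to $u$, so either $g_4$ is $3$-vertex-free or it is a $(3,5^+,4,5^+)$-face. Since Lemma~\ref{l217}(4) guarantees that at least one of the two degree conditions holds, at least one of the two faces containing $uv$ has the required form, completing the proof. I do not expect any computational obstacle here; the only point demanding care is the bookkeeping that distinguishes a $(3,5^+,4,5^+)$-face (the $3$- and $4$-vertices diagonal) from a $(3,4,5^+,5^+)$-face (them adjacent), and checking that the adjacency of $u$ and $v$ along $uv$ forces the diagonal configuration on both candidate faces. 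That incidence fact is immediate from $u$ being poor, so the main work is really just setting up the labelling correctly and reading off the degree pattern.
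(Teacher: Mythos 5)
Your proof is correct, and it is leaner than the paper's own argument. Both proofs pivot on the same key fact---Lemma~\ref{l217}(4) applied to the poor $4$-vertex $u$, which forces a $5^+$-vertex in the pair of neighbors of $u$ other than $\{v, u^{(3)}\}$---but you organize the deduction differently. The paper first cases on whether the vertices diagonal to $u$ on the two faces containing $uv$ are $3$-vertices: when both have degree at least $4$ it applies Lemma~\ref{l217}(4) exactly as you do to find a face with no $3$-vertices, but when one diagonal vertex is a $3$-vertex it must invoke Lemma~\ref{l217}(1)--(3) (and implicitly Lemma~\ref{l213}(2)) to control the remaining degrees, identify the $(3,4,4,4)$-face from the definition of $Q_4(v)$, and only then apply Lemma~\ref{l217}(4) to produce the $(3,5^+,4,5^+)$-face. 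You observe that this branching is unnecessary: once Lemma~\ref{l217}(4) pins degree at least $5$ on $u^{(2)}$ (or $u^{(4)}$), the corresponding face through $uv$ already has three vertices of degrees $4$, $5^+$, $5^+$ in the right cyclic positions, so the lone undetermined vertex---which is diagonal to $u$---either has degree at least $4$ (no $3$-vertices) or degree $3$ (a $(3,5^+,4,5^+)$-face), and both alternatives of the conclusion fall out at once. Besides being shorter and using only part (4) of Lemma~\ref{l217}, your argument proves something slightly stronger: the conclusion of Lemma~\ref{le33} holds for every poor $4$-vertex $u\in N(v)\setminus C_0$, since the requirement in the definition of $Q_4(v)$ that $u$ lie on a $(3,4,4,4)$-face is never used.
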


\begin{proof}
As $u\in Q_4(v)$,  $u$ is a poor $4$-vertex and incident with one $(3, 4, 4, 4)$-face. Suppose that  $f_i=u v_i u_i v_{i+1}$ for $i\in [4]$, where $v= v_4$ and the subscripts are taken modulo $4$.  We show that $f_3$ or $f_4$ contains no $3$-vertices or is a $(3, 5^+,4,5^+)$.

If $d(u_3)\geq 4$ and $d(u_4)\geq 4$, then  by Lemma~\ref{l217}(4), either $d(v_1)\ge 5$ or $d(v_3)\geq 5$,  so $f_3$ or $f_4$ cannot contain $3$-vertices.  Thus, by symmetry, let $d(u_3)= 3$.  By Lemma~\ref{l217} (1)-(3), $d(v_3)\geq 4$ and $d(u_j)\ge 4$ for $j\in [4]\setminus \{3\}$. So $f_4$ contains no $3$-vertices if $d(v_1)\not=3$.  Let $d(v_1)=3$. Then $v_1uv_2u_1$ is the $(3,4,4,4)$-face. By Lemma~\ref{l217}(4), $d(v_3)\geq 5$, so $f_3$ is a $(3,5^+,4,5^+)$-face, as desired. 
\end{proof}


Let $v\in int(C_0)$ with $d(v)=k\geq 5$. 
By Lemma~\ref{le33}, a vertex in $Q_4(v)$ must either share a $4$-face without $3$-vertices with $v$, or is on a $(3,5^+,4,5^+)$-face.  In the former case, the $4$-face could contain at most two vertices from $Q_4(v)$, then the charges from $v$ to the vertices and the $4$-face are at most $\frac{1}{2}+2\cdot \frac{1}{6}<1$. In the latter case, the face contains exactly one vertex from $Q_4(v)$, then by (R2), the charges from $v$ to the vertex and the $4$-face are at most $\frac{3}{4}+\frac{1}{6}<1$.  Thus,  by (R2),
\begin{align}
\mu^*(v) &\geq 2k-6-\frac{9}{4}t_3-t_4-\frac{1}{2}t_p\label{eq2}\\
        &\geq (k-2t_3-t_4-t_p)+(\frac{7}{8}k-6)\ge \frac{7}{8}k-6\; \text{ (as $t_3\leq \lfloor\frac{k}{2}\rfloor$)}.\label{eq3}
\end{align}

\begin{Lemma}\label{le34}
Suppose that $v\in int(C_0)$ is a $5$-vertex with $t_3> 0$. Then $\mu^*(v)\geq 0$.
\end{Lemma}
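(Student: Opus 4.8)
The initial charge is $\mu(v)=2\cdot 5-6=4$, and since $t_3>0$ while $t_3\le\lfloor 5/2\rfloor=2$ by~\eqref{eq1}, I would split the argument according to $t_3=2$ and $t_3=1$. The crude estimate~\eqref{eq2} is useless here: for $k=5$ it reads $\mu^*(v)\ge 4-\frac94 t_3-t_4-\frac12 t_p$, which is already negative once $t_3=2$ (where~\eqref{eq1} forces $t_4=0$ but still leaves the bound at $-1$) and once $t_3=1,\ t_4=2$ (where it gives $-\frac14$). The whole point of the proof is therefore to \emph{refine}~\eqref{eq2}: the two extremal discharges it charges for, namely $\frac94$ to a triangle (realised only by a bad $(5,4,3)$-face with a weak $3$-vertex, by (R2b)) and $1$ to a $4$-face, cannot be attained simultaneously on the faces around $v$, and the reducible configurations of Section~2 are exactly what forbid the simultaneous worst cases.

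For $t_3=2$ the vertex $v$ is incident with precisely two triangles $f_1,f_3\in F_3$, and~\eqref{eq1} gives $t_4=0$, so the only other outgoing charge is at most one pendant $3$-face (across the single free edge $vv_5$) at $\frac12$ by (R3). Here I would run through the unordered pairs of triangle types and cap the total using Lemma~\ref{le29}. Part~(2) permits at most one of $f_1,f_3$ to be bad; when $f_1$ is a bad $(5,4,3)$-face, part~(3) forces the $3$-vertex of a companion $(5,4,3)$-face to be strong (cost $\le\frac32$) and part~(4) leaves at most one bad $4$-vertex on a companion $(5,4,4)$-face; and part~(1) makes $v_5$ a $4^+$-vertex whenever both triangles are $(3,4^-,5)$-faces, which kills the pendant. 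In this way the generic pairs total $\le\frac94+\frac32=\frac{15}4$ or $\le 2+2=4$, giving $\mu^*(v)\ge 0$.

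For $t_3=1$ the vertex is incident with one triangle and, by~\eqref{eq1}, at most two $4$-faces from $F_4$ (with $t_4=2$ already forcing $t_p=0$). If the triangle is \emph{not} a bad $(5,4,3)$-face its charge is at most $2$, so replacing $\frac94$ by $2$ in~\eqref{eq2} gives $\mu^*(v)\ge 4-2-t_4-\frac12 t_p\ge 0$, using the bundled $Q_4(v)$-estimate preceding~\eqref{eq2} (which itself rests on Lemma~\ref{le33} and Lemma~\ref{l220}). The one delicate subcase is a bad $(5,4,3)$-face together with $t_4=2$: then Lemma~\ref{l214} applies and guarantees that the $v$-diagonal of each incident $4$-face is a $4^+$-vertex, and Lemma~\ref{l213}(2) (a diagonal pair cannot be two $3$-vertices) then rules out the expensive $(5^+,4^+,3,3)$-face, so by (R2.2) each such $4$-face costs at most $\frac56$; Lemmas~\ref{l217} and~\ref{l220} keep the accompanying $Q_4(v)$-charges inside the bundle, and the arithmetic closes.

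The hard part is the thin residue of configurations on which even the refined bounds still exceed $4$ — most notably a $(5,3,3)$-face sitting beside a bad weak $(5,4,3)$-face (naive cost $2+\frac94=\frac{17}4$), and a bad $(5,4,3)$-face flanked by two \emph{superlight} $4$-faces (each still costing $1$ under Lemma~\ref{l214}). The cited structural lemmas do not immediately bound these, so I would dispose of them by a direct reducibility argument in the spirit of the proofs of Lemmas~\ref{le29} and~\ref{l214}: delete $v$ together with its incident $3$-vertices, colour the resulting smaller graph of $\mathcal{F}$ by minimality, and then exploit the freedom in recolouring the deleted low-degree vertices to produce an admissible colour for $v$, contradicting the minimality of $(G,C_0)$. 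Isolating and reducing exactly these few extremal clusters, rather than the routine charge bookkeeping of the generic pairs, is where the real work lies.
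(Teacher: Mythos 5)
Your overall skeleton does match the paper's: dispose of the crude bound, split into $t_3=2$ and $t_3=1$, use Lemma~\ref{le29} for the two-triangle case and Lemmas~\ref{l214}, \ref{l213}, \ref{l217} together with the bundled $Q_4(v)$ estimate (Lemma~\ref{le33}) for the one-triangle case. But the proposal does not actually prove the lemma: precisely at the configurations you label the ``thin residue'' you replace an argument by an unexecuted sketch (``delete $v$ together with its incident $3$-vertices \ldots exploit the freedom in recolouring''). In this paper no reduction of that kind is so soft: the analogous proofs (Lemmas~\ref{le29}(4), \ref{l210}, \ref{l214}) require identifying carefully chosen vertices via Lemmas~\ref{le22}(5) and~\ref{l215}, checking the identified graph stays in $\mathcal{F}$ with smaller $\sigma$, and a delicate recolouring case analysis; deleting a $5$-vertex and its $3$-neighbours and recolouring greedily does not visibly produce the needed colour for $v$. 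So the part you yourself identify as ``where the real work lies'' is missing. You also never treat $N(v)\cap C_0\neq\emptyset$, which the paper dispatches first with different counting, and in the $t_3=2$ case your pair analysis omits companions of type $(5,5^+,3)$, where a pendant $3$-face survives and it is rule (R2c) with the weakness of $v$ itself (not of a $3$-vertex) that makes the total $\frac94+\frac54+\frac12=4$ close.

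More importantly, neither of your two ``hard'' residues needs a new reducibility lemma, and for one of them your claim that ``the cited structural lemmas do not immediately bound these'' is simply false. For the bad $(5,4,3)$-face flanked by two $4$-faces: since $t_3=1$ and $t_4=2$, the faces across the edges $vv_3$ and $vv_5$ are $6^+$-faces, so $v_3$ and $v_5$ are rich; by Lemma~\ref{l214} the diagonals $u_3,u_4$ are $4^+$-vertices, so any $3$-vertex sits at $v_3$, $v_4$ or $v_5$. If $d(v_4)=3$, each $4$-face has the rich vertex $v_3$ (resp.\ $v_5$) among its $4$-vertices, so neither is superlight (each costs at most $\frac56$, or $\frac34$ with a $5^+$-vertex); if $d(v_4)=4$, Lemma~\ref{l217}(2) applied to the $4$-vertex $v_4$ forbids $d(v_3)=d(v_5)=3$, so at most one face is superlight and $\mu^*(v)\ge 4-\frac94-1-\frac12-\frac16=\frac1{12}>0$. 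Two superlight flanking faces can therefore never occur. As for the pair ``bad $(5,4,3)$-face plus $(5,3,3)$-face'', the paper excludes it by Lemma~\ref{le29}(2): a $(3,3,5)$-face counts as bad on the same footing as a bad $(3,4,5)$-face (this is exactly how the definition of a bad $5$-vertex and the source lemma of~\cite{XMW12} treat it), so the two triangles at $v$ cannot both be of these types. In short, the paper closes every case inside the discharging with its existing lemma inventory; your proposal leaves the decisive cases unproven and, where it asserts the inventory is insufficient, it is mistaken.
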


\begin{proof}
 If $|N(v)\cap C_0|\geq 2$, then $t_3+t_p\leq 2$ and $t_4=0$, as $t_3>0$, so by (R1)-(R5), $\mu^*(v)\ge 4-9/4-1/2>0$.  If $|N(v)\cap C_0|=1$, then $v$ cannot be incident with two bad 3-faces in $F_3$ by Lemma~\ref{le29} (2), and when $v$ is incident with a bad $3$-face and a $(3,4,5)$-face, the $3$-vertex is strong on the $(3,4,5)$-face by Lemma~\ref{le29} (3),  so $\mu^*(v)\geq 4-\max\{\frac{9}{4}+\frac{1}{2}, \frac{9}{4}+\frac{7}{4}, 2\times 2\}=0$ by (R2.1)-(R2.3) and (R3).   Therefore, we assume that $|N(v)\cap C_0|=0$.


Assume first that $t_3=2$.  Let $f_1=vv_1v_2$ and $f_3=vv_3v_4$ be the incident $3$-faces and $v_5$ be the fifth neighbor of $v$.  By Lemma~\ref{le29} (2), at most one of $f_1, f_3$  is bad.   If both $f_1$ and $f_3$ are $(3, 4^-, 5)$-faces, then by Lemma~\ref{le29} (1),  $d(v_5)\ge 4$ or $v_5\in C_0$, , and by Lemma~\ref{le29} (3), if one is bad, then the $3$-vertex on the other one is strong, thus by (R2b),  $\mu^*(v)\geq 4-\max\{\frac{9}{4}+\frac{7}{4}, 2\times 2\}= 0$.  If $f_1$ is a $(3,4,5)$-face and $f_3$ is a $(3, 5, 5^+)$-face, then by (R2) and (R3), $\mu^*(v)\geq 4-(\frac{9}{4}+\frac{5}{4}+\frac{1}{2})= 0$ if $v$ is weak, and $\mu^*(v)\ge 4-\max\{\frac{9}{4}+\frac{7}{4}, \frac{7}{4}+\frac{7}{4}+\frac{1}{2}\}\ge 0$ if $v$ is not weak. If none of $f_1,f_3$ is a $(3,4,5)$-face, then by (R2), $\mu^*(v)\ge 4-2\cdot \frac{7}{4}-\frac{1}{2}=0$.

Finally, let $t_3= 1$. Then $t_4\leq 2$. If $t_4\leq 1$, then $|Q_4(v)|=0$,  thus, by (R2a), (R2.2) and (R3),  $\mu^*(v)\geq 4-\frac{9}{4}-1-\frac{1}{2}=\frac{1}{4}>0$.  Thus assume that $t_4= 2$ and let $f_1=vv_1v_2$, $f_3=vv_3u_3v_4$ and $f_4=vv_4u_4v_5$ be the incident faces.  Note that $v_3, v_5$ are rich and $|Q_4(v)|\leq 1$. If  $f_1$ is not bad, then by Lemma~\ref{le33} and by (R2.1), (R2.2)  and~(R2.3),  $\mu^*(v)\geq 4-2-\max\{1+\frac{3}{4}+\frac{1}{6}, 2\cdot 1\}=0$. Therefore,  let $f_1$ be a bad $(5, 4, 3)$-face.  By Lemma~\ref{l214}, $d(u_3)\geq 4$ and $d(u_4)\geq 4$.  Consider $d(v_4)=3$ first.  Then $|Q_4(v)|= 0$, and by Lemma~\ref{l213}, $d(v_3)\geq 4$ and $d(v_5)\geq 4$.  Therefore,  if $d(v_3)=d(v_5)=4$, then as $v_3, v_5$ are rich, by (R2a) and (R2.2),  $v$ gives at most $\frac{5}{6}$ to each of $f_3, f_4$, thus, $\mu^*(v)\geq 4-\frac{9}{4}  -2\times \frac{5}{6}=\frac{1}{12}>0$; if $d(v_3)\geq 5$ or $d(v_5)\geq 5$,  then there are at least two $5^+$-vertices in $b(f_3)$ or $b(f_4)$,  thus, by (R2a) and (R2.2), $\mu^*(v)\geq 4-\frac{9}{4}-1-\frac{3}{4}=0$.   Assume next that $d(v_4)= 4$. Then  $|Q_4(v)|\leq 1$. By Lemma~\ref{l217} (2), either $d(v_3)\geq 4$ or $d(v_5)\geq 4$.  It means that $f_3$ or $f_4$ is a $(5, 4, 4^+, 4^+)$-face, by (R2a), (R2.2) and (R2.3),  $\mu^*(v)\geq 4-\frac{9}{4}-1-\frac{1}{2}-\frac{1}{6}=\frac{1}{12}>0$.   Assume that $d(v_4)\geq 5$. Then $|Q_4(v)|=0$, and $f_3, f_4$ are $(5, 5^+, 4^+, 3^+)$-faces,  by (R2a) and (R2.2), $v$ gives at most $\frac{3}{4}$ to each of $f_3, f_4$, then  $\mu^*(v)\geq 4-\frac{9}{4}-2\times \frac{3}{4}=\frac{1}{4}>0$.
\end{proof}

\medskip

For a poor $5$-vertex $v\in int(C_0)$,  let $f(v)=(f_1, f_2, f_3, f_4, f_5)$, where $f_i=vv_iu_iv_{i+1}$ with $i\in \Z_5$, the cyclic group of order $5$.  We say that $v$ gives a {\em charge sequence $(a_1, a_2, a_3, a_4, a_5)$ to $f(v)$} if $v$ gives  at most $a_i$ to $f_i$ by (R2.2).

\begin{Lemma}\label{le35}
For each $5$-vertex $v\in int(C_0)$,  $\mu^*(v)\geq 0$.
\end{Lemma}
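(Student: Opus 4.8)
The charge of $v$ is $\mu(v)=2\cdot 5-6=4$, and since $v\in int(C_0)$ it loses charge only through (R2.1)--(R2.3) and (R3). If $t_3>0$ we are done by Lemma~\ref{le34}, so the plan is to assume $t_3=0$. Then $v$ sends charge only to its incident $4$-faces (R2.2), to the vertices of $Q_4(v)$ (R2.3), and to its pendant $3$-faces (R3). As recorded in the derivation of~(\ref{eq2}), the charge $v$ sends to any one $4$-face together with the $Q_4(v)$-charges pinned to that face is at most $1$, so
\[
\mu^*(v)\ \ge\ 2\cdot 5-6-t_4-\tfrac12 t_p\ =\ 4-t_4-\tfrac12 t_p .
\]
If $v$ is \emph{rich}, then $t_4\le 4$, and~(\ref{eq1}) gives $t_4\le 4-t_p$ whenever $t_4>0$; hence $t_4+\tfrac12 t_p\le 4$ and $\mu^*(v)\ge 0$. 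So the whole difficulty is concentrated in the remaining case, where $v$ is a \emph{poor} $5$-vertex: then $t_4=5$, $t_p=0$ (all five incident faces are $4$-faces, so no $3$-valent neighbor of $v$ can carry a pendant triangle without violating Proposition~\ref{pr21}), and the crude estimate above only yields $\mu^*(v)\ge-1$.

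For the poor case I would work directly with the charge sequence $(a_1,\dots,a_5)$ of $f(v)=(f_1,\dots,f_5)$, $f_i=vv_iu_iv_{i+1}$, and prove the single inequality $\sum_{i\in\Z_5}a_i+\tfrac16|Q_4(v)|\le 4$. By (R2.2) each $a_i\le 1$, and $f_i$ receives more than $\tfrac12$ only if one of $v_i,u_i,v_{i+1}$ is a $3$-vertex; moreover $a_i=1$ forces $f_i$ to be either a $(5^+,4^+,3,3)$-face (two $3$-vertices, one of them $u_i$) or a superlight $(3,4,4,5^+)$- or $(3,4,5^+,4)$-face (one $3$-vertex, whose two companions on $f_i$ are \emph{poor} $4$-vertices). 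The available supply of $3$-vertices is tightly limited: by Lemma~\ref{l213}(2) the indices with $d(v_i)=3$ form an independent set in the cyclic order $\Z_5$, so at most two of the $v_i$ are $3$-vertices, and by Lemma~\ref{l220}(1) at most two of the $u_i$ are $3$-vertices.

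The mechanism I would use is a compensation argument: each face of charge $1$ must be balanced by a nearby cheap $(4^+,4^+,4^+,5^+)$-face of charge $\tfrac12$, recovering the missing unit. The main tool is Lemma~\ref{l220}(2): whenever $d(u_i)=3$, one of $v_{i-1},v_{i+2}$ is a $5^+$-vertex, which converts a neighbouring face $f_{i-1}$ or $f_{i+1}$ into a $(4^+,4^+,4^+,5^+)$- or $(3,5^+,4,5^+)$-face and so drops its charge to $\tfrac12$ or $\tfrac34$. For the superlight faces I would use Lemma~\ref{l220}(4),(6) to control how the incident poor $4$-vertices can belong to $Q_4(v)$, and Lemma~\ref{l217}(4) to force a $5^+$-vertex opposite a $3$-vertex on a poor $4$-vertex's $(3,4,4,4)$-face; both again produce a cheap neighbouring face. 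The $Q_4(v)$-charges are handled exactly as in the discussion preceding Lemma~\ref{le34}: by Lemma~\ref{le33} each such $\tfrac16$ is pinned either to an incident $4$-face with no $3$-vertex (which then carries only $\tfrac12$, leaving room for two such charges) or to a $(3,5^+,4,5^+)$-face (carrying $\tfrac34$, leaving room for one), so no $Q_4(v)$-charge ever lands on a face that already receives the full $1$.

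I expect the genuinely hard configurations to be those that saturate the supply of $3$-vertices, namely two $3$-valent $u_i$ together with two $3$-valent $v_i$, so that all five (or four) faces carry a $3$-vertex and the crude estimate is attained with equality. There the argument cannot be local: I would have to check, using Lemma~\ref{l220}(5) (if $d(v_i)=d(u_i)=d(v_{i+2})=3$ then $d(v_j)\ge5$ for the other two indices) together with Lemma~\ref{l220}(2), that the forced $5^+$-neighbours cannot all coincide, so that at least one of the five faces is honestly a $\tfrac12$-face, and simultaneously that the $Q_4(v)$-charges do not collide with the charge-$1$ faces. In other words, the real obstacle is the combinatorial bookkeeping of the overlaps between the forced $5^+$-vertices, the superlight faces, and the $Q_4(v)$ contributions around a single poor vertex, rather than any one estimate; organizing it by the number $|\{i:d(u_i)=3\}|\in\{0,1,2\}$ and then by the positions of the $3$-valent $v_i$ is how I would keep the cases under control.
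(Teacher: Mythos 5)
Your plan coincides exactly with the paper's own proof in its setup: reduce to $t_3=0$ via Lemma~\ref{le34}, absorb the (R2.3) charges into incident $4$-faces by the pinning argument of Lemma~\ref{le33} to get $\mu^*(v)\ge 4-t_4-\frac{1}{2}t_p$, dispose of the rich case, and for a poor $v$ aim at the inequality $\sum_i a_i+\frac{1}{6}|Q_4(v)|\le 4$ using Lemmas~\ref{l213}(2), \ref{l217} and~\ref{l220}. The problem is that your write-up stops precisely where the proof begins: that inequality is never established. In its place you offer a heuristic ("each charge-$1$ face is balanced by a nearby cheap $\frac{1}{2}$-face") plus the admission that the hard configurations remain "to check". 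In the paper this verification is the entire content of the lemma --- Cases 1--3, organized by the number of $3$-vertices in $N(v)$ and then in $M(v)$, each subcase closed by an explicit charge sequence. Since everything before that point is a two-line reduction, what is missing is not a detail but the proof itself; this is a genuine gap.

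The gap is substantive because your compensation mechanism, as stated, fails in exactly the tight cases. Consider a poor $5$-vertex with $d(v_1)=d(v_3)=d(u_3)=3$ and $f_5$ superlight: this is compatible with every lemma you cite (Lemma~\ref{l217}(2) forces $d(v_2)\ge 5$, which satisfies Lemma~\ref{l220}(2) for $u_3$, and Lemma~\ref{l220}(3) makes the other $u_j$ $4^+$-vertices). Here $f_3$ is a $(5^+,4^+,3,3)$-face and $f_5$ is superlight, so by (R2.2) the five faces receive $\frac{3}{4}+\frac{3}{4}+1+\frac{1}{2}+1=4$: two charge-$1$ faces against a single $\frac{1}{2}$-face, no compensation in your sense, and zero slack. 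The lemma survives only because $Q_4(v)=\emptyset$ is forced, and forcing it needs tools your outline never invokes: $v_5\notin Q_4(v)$ by Lemma~\ref{l220}(6), and $v_4\notin Q_4(v)$ because Lemma~\ref{l217}(1) says a poor $4$-vertex is never incident with a $(3,3,4,4^+)$-face such as $f_3$. Exact-equality cases of this kind recur throughout the paper's analysis (for instance the sequence $(1,\frac{1}{2},\frac{1}{2},\frac{1}{2},1)$ together with three charges of $\frac{1}{6}$, summing to exactly $4$), and each one is closed by a different combination of Lemma~\ref{l217}(1),(4) and Lemma~\ref{l220}(2)--(6). Identifying those combinations, configuration by configuration, is precisely the work your proposal defers, so the statement cannot be considered proved from what you have written.
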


\begin{proof}
By Lemma~\ref{le34}, we may assume  that $t_3=0$.   By~(\ref{eq1}), $\mu^*(v)\ge 4-t_4-t_p/2\ge 0$ if $t_4\le 4$.  Thus, we let $t_4= 5$, that is, $v$ is poor. Let $M(v)=\{u_1, u_2, u_3, u_4, u_5\}$.  By Lemma~\ref{l220}(1), $M(v)$ has at most two $3$-vertices, and by Lemma~\ref{l213} (2),  there are at most two $3$-vertices in $N(v)$.

\medskip
\n{\bf Case 1.} $N(v)$ has exactly two $3$-vertices.
\medskip

By symmetry and Lemma~\ref{l213} (2), we may assume that $d(v_1)= d(v_3)= 3$. 
By Lemma~\ref{l213} (2), $d(v_2), d(v_4), d(v_5)\ge 4$.  Furthermore, by Lemma~\ref{l217} (2), $d(v_2)\not=4$, thus $d(v_2)\ge 5$.

Assume that some vertex, say $u_1$,  in $M(v)$ has degree $3$.  Then by Lemma~\ref{l220} (3), $d(u_j)\geq 4$ for $j\in [5]\setminus \{1\}$, and by Lemma~\ref{l220}(5),  $d(v_j)\ge 5$  for $j\in [5]\setminus \{1, 3\}$, thus $|Q_4(v)|=0$, and $f_2, f_3, f_4, f_5$ are $(5, 5^+, 4^+, 3)$-   $(5, 3, 4^+, 5^+)$-, $(5, 4^+, 4^+, 5^+)$-, and $(5, 5^+, 4^+, 3)$-faces, respectively.   By (R2.2) and (R2.3), $v$ gives a charge sequence $(1, \frac{3}{4}, \frac{3}{4}, \frac{1}{2}, \frac{3}{4})$  to $f(v)$, thus $\mu^*(v)\geq 4-1-3\times\frac{3}{4}-\frac{1}{2}=\frac{1}{4}>0$.  Hence we assume that $M(v)$ has no $3$-vertex.

As $f_1, f_2$ are $(3,4^+,5^+,5)$-faces and $f_4$ is a $(4^+, 4^+, 4^+,5)$-face, by (R2.2), $v$ gives $2\cdot\frac{3}{4}+\frac{1}{2}=2$ to them.  Consider $f_3$ (and similarly $f_5$), which is a $(3,4^+,4^+,5)$-face. We claim that  $$\text{$v$ gives at most $1$ to the face and the vertex in $Q_4(v)\cap b(f_3)$, }$$ which shows that $\mu^*(v)\ge 4-2-2\cdot1=0$. 
In fact, if it contains another $5^+$-vertex, then by (R2.2) and (R2.3), $v$ gives at most $\frac{3}{4}+\frac{1}{6}<1$, as desired.  So let it be a $(3,4,4,5)$-face.  If it contains two poor $4$-vertices, then it is superlight and by Lemma~\ref{l220}(4), it contains no vertex in $Q_4(v)$, thus by (R2.2), $v$ gives $1$ to it; otherwise, it is light, thus by (R2.2) and (R2.3), $v$ gives $\frac{5}{6}+\frac{1}{6}=1$ to it. 


\medskip
\n{\bf Case 2.} $N(v)$ has exactly one $3$-vertex.  By symmetry, we assume that $d(v_1)= 3$.
\medskip

Assume first that $M(v)$ contains no $3$-vertex.  Then each of $f_2, f_3,f_4$ is a $(5, 4^+, 4^+, 4^+)$-face, and $f_1$ is a $(5, 3, 4^+, 4^+)$-face  and $f_5$ is a $(5, 4^+, 4^+, 3)$-face. By (R2.2) and (R2.3),  $v$ gives a charge sequence $(1, \frac{1}{2}, \frac{1}{2},\frac{1}{2}, 1)$ to $f(v)$, thus $\mu^*(v)\geq 4-2\times 1-3\times \frac{1}{2}-3\times \frac{1}{6}=0$ if $|Q_4(v)|\le 3$.  Thus, assume that $v_j\in Q_4(v)$ for $j\in [5]\setminus \{1\}$.  If $u_1$ is a poor $4$-vertex, then   $f_1=vv_1u_1v_2$ is a $(5,3,4,4)$-face  such that $u_1,v_2$ are poor and $v_2\in Q_4(v)$, a contradiction to Lemma~\ref{l220}(6). Thus, assume that $u_1$ is not a poor 4-vertex. In this case, $f_1$ is light. By (R2.2), $v$ gives $\frac{5}{6}$ to $f_1$. Thus, $\mu^*(v)\geq 4-1-\frac{5}{6}-3\times \frac{1}{2}-4\times \frac{1}{6}=0$.

\medskip

Next, assume that $M(v)$ contains exactly one $3$-vertex. Let $d(u_1)= 3$ (or by symmetry $d(u_5)=3$).  By our assumption, $d(u_j)\geq 4$ for $j\not= 1$ and $j\in [5]$. Thus, each of $f_2, f_3,f_4$ is a $(5, 4^+, 4^+, 4^+)$-face, $f_5$ is a $(5, 4^+, 4^+, 3)$-face. Note that if $d(v_2)\geq 5$, then $|Q_4(v)|\leq 3$; if $d(v_2)= 4$,  then by Lemma~\ref{l217}(1), $v_2$ is rich,  which implies that $v_2\not\in Q_4(v)$, then $|Q_4(v)|\leq 3$.  By (R2.2) and (R2.3), $v$ gives a charge sequence $(1, \frac{1}{2}, \frac{1}{2},\frac{1}{2}, 1)$ to  $f(v)$, and $\mu^*(v)\geq 4-2\times 1-3\times \frac{1}{2}-3\times \frac{1}{6}=0$.  Hence, we may assume, by symmetry, that either $d(u_3)=3$ or $d(u_2)=3$.

Let $d(u_3)=3$.  By Lemma~\ref{l220} (2), either $d(v_2)\geq 5$ or $d(v_5)\geq 5$, and by symmetry we may assume that $d(v_5)\ge 5$.  This implies that $f_5$ is a $(3, 5, 4^+, 5^+)$-face and $v_5\notin Q_4(v)$.  In this case,  both $f_2, f_4$ are two $(5, 4^+, 4^+, 4^+)$-faces, and $f_1$ is a $(5, 3, 4^+, 4^+)$-face.  If $d(v_3)\geq 5$ or $d(v_4)\geq 5$, then $|Q_4(v)|\leq 2$, and by (R2.2) and (R2.3), $v$ gives a charge sequence  $(1, \frac{1}{2}, \frac{3}{4}, \frac{1}{2}, \frac{3}{4})$ to $f(v)$, thus,  $\mu^*(v)\geq 4-1-2\times \frac{3}{4}-2\times \frac{1}{2}-2\times \frac{1}{6}=\frac{1}{6}>0$.   Then assume that $d(v_3)=d(v_4)= 4$. By Lemma~\ref{l220}(4), either $v_3\not\in Q_4(v)$   or $v_4\not\in Q_4(v)$. If $f_1$ is a $(5,3,4,4)$-face with two poor $4$-vertices, then by Lemma~\ref{l220}(6), $v_2\not\in Q_4(v)$, it follows that $|Q_4(v)|\le 1$, so by (R2.2) and (R2.3),  $v$ gives a charge sequence $(1, \frac{1}{2},  1,\frac{1}{2},\frac{3}{4})$ to $f(v)$, thus, $\mu^*(v)\geq 4-1-\frac{1}{2}-1-\frac{1}{2}-\frac{3}{4}-\frac{1}{6}=\frac{1}{12}>0$;  otherwise, $f_1$ is a light $(5,3,4^+,4^+)$-face, so by (R2.2) and (R2.3), $\mu^*(v)\ge 4-\frac{5}{6}-\frac{1}{2}-1-\frac{1}{2}-\frac{3}{4}-2\cdot \frac{1}{6}>0$.

Let $d(u_2)=3$ now. By Lemma~\ref{l220} (2),  $d(v_4)\geq 5$. Then both $f_3, f_4$ are $(5, 4^+, 4^+, 4^+)$-faces.  If $v_2$ is a rich $4$-vertex or $5^+$-vertex, then $v_2\notin Q_4(v)$ and $|Q_4(v)|\leq 2$, so by (R2.2) and (R2.3), $v$ gives at most $\frac{5}{6}$ to  each of $f_1, f_2$, and $v$ gives a charge sequence $(\frac{5}{6}, \frac{5}{6}, \frac{1}{2}, \frac{1}{2}, 1)$ to $f(v)$, it follows that  $\mu^*(v)\geq 4-2\times \frac{5}{6}-2\times \frac{1}{2}-1- 2\times\frac{1}{6}=0$. Therefore,  we may assume that $v_2$ is a poor $4$-vertex. Then  by Lemmas~\ref{l217} (4),  $d(u_1)\geq 5$ as $d(u_2)=3$, and by Lemma~\ref{l220}(4),  and $v_2\not\in Q_4(v)$ or $v_3\not\in Q_4(v)$. Consider $f_5$. By Lemma~\ref{l220}(6), it is either a light $(5,3,4^+, 4^+)$-face, or $(5,3,4,4)$-face with two poor $4$-vertices but $v_5\not\in Q_4(v)$.   By (R2.2) and (R2.3), $v$ gives 1 to $f_2$, $\frac{3}{4}$ to $f_1$, and $\frac{5}{6}$ or 1 to $f_5$ (depend on whether it is light or superlight). Thus,  $\mu^*(v)\geq 4-\frac{3}{4}-1-2\times \frac{1}{2}-\max\{1+\frac{1}{6}, \frac{5}{6}+2\cdot \frac{1}{6}\}=\frac{1}{12}>0$.

\medskip

Assume finally that $M(v)$ contains exactly two $3$-vertices. If $d(u_1)=3$ or $d(u_5)=3$,  then by Lemma~\ref{l220} (3), $M(v)$ contains exactly one $3$-vertex, contrary to our assumption, so by symmetry, $d(u_2)=d(u_3)=3$ or $d(u_2)=d(u_4)=3$.

Let $d(u_2)=d(u_3)=3$.   By Lemma~\ref{l220} (2),  $d(v_4)\geq 5$ as $d(u_2)=3$, and either $d(v_2)\geq 5$ or $d(v_5)\geq 5$ as $d(u_3)=3$.    By Lemma~\ref{l217}(4), $v_3$ is not a poor $4$-vertex as $d(u_2), d(u_3)<4$.  It follows that $|Q_4(v)|\le 1$.   If $d(v_2)=4$, then $d(v_5)\geq 5$ and $f_2$ is a light $(5, 4, 3, 4^+)$-face, so by (R2.2) and (R2.3), $v$ gives a  charge sequence $(1, \frac{5}{6}, \frac{3}{4}, \frac{1}{2}, \frac{3}{4})$ to $f(v)$, and  $\mu^*(v)\geq 4-1-\frac{5}{6}-2\cdot\frac{3}{4}-\frac{1}{2}-\frac{1}{6}=0$;  if $d(v_2)\geq 5$, then $f_2, f_3$ are $(5, 5^+, 3, 4)$-faces,  so by (R2.2) and (R2.3), $v$ gives a charge sequence $(\frac{3}{4}, \frac{3}{4}, \frac{3}{4}, \frac{1}{2}, 1)$ to  $f(v)$, and  $\mu^*(v)\geq 4-1-3\cdot \frac{3}{4}-\frac{1}{2}-\frac{1}{6}>0$.

Let $d(u_2)=d(u_4)=3$.   By Lemma~\ref{l220} (2), $d(v_3)\geq 5$ and $d(v_4)\geq 5$.  If both $v_2, v_5$ are poor $4$-vertices, then by applying Lemma~\ref{l217}(4) to $v_2$ and $v_5$, respectively,  $d(u_1)\geq 5$ and $d(u_5)\geq 5$ as $d(u_2)=d(u_4)= 3$, so by (R2.2) and (R2.3),  $v$ gives a charge sequence $(\frac{3}{4}, \frac{3}{4}, \frac{1}{2}, \frac{3}{4}, \frac{3}{4})$  to $f(v)$, and  $\mu^*(v)\geq 4-4\times\frac{3}{4}-\frac{1}{2}-2\times\frac{1}{6}=\frac{1}{6}>0$.  Then let $v_2$ be a rich $4$-vertex or a $5^+$-vertex. By (R2.2) and (R2.3), $v$ gives a  charge sequence $(\frac{5}{6}, \frac{3}{4}, \frac{1}{2}, \frac{3}{4}, 1)$ to $f(v)$, and $\mu^*(v)\geq 4-1-\frac{5}{6}-2\cdot \frac{3}{4}-\frac{1}{2}-\frac{1}{6}=0$.

\medskip
\n{\bf Case 3.} $N(v)$ has no $3$-vertex.
\medskip

If $M(v)$ has at most one $3$-vertex, then $f(v)$ has at least  four $(5, 4^+, 4^+, 4^+)$-faces, by (R2.2) and (R2.3), $v$ gives  the charge sequence $(1, \frac{1}{2}, \frac{1}{2}, \frac{1}{2}, \frac{1}{2})$ to $f(v)$,  thus  $\mu^*(v)\geq 4-1-4\times\frac{1}{2}-5\times\frac{1}{6}>0$.  Hence by Lemma~\ref{l220} (1), we assume that  $M(v)$ has exactly two $3$-vertices.   By symmetry, we assume that  $d(u_2)=d(u_3)=3$ or  $d(u_2)=d(u_4)=3$. In the former case,  by Lemma~\ref{l217}(4)  $v_3$ is not a poor $4$-vertex, which implies that $v_3\not\in Q_4(v)$, thus  $f_2, f_3$ are light $(5, 4, 3, 4^+)$-faces or $(5, 5^+, 3, 4^+)$-faces. By (R2.2) and (R2.3), $v$ gives  a charge sequence $(\frac{1}{2}, \frac{5}{6}, \frac{5}{6}, \frac{1}{2}, \frac{1}{2})$  to $f(v)$, thus,  $\mu^*(v)\geq 4-2\times \frac{5}{6}-3\times\frac{1}{2}-4\times\frac{1}{6}=\frac{1}{6}>0$.  In the latter case, by Lemma~\ref{l220} (2), $d(v_1)\ge 5$ or $d(v_3)\ge 5$ as $d(u_4)=3$, and $d(v_1)\ge 5$ or $d(v_4)\ge 5$ as $d(u_2)=3$, so $|Q_4(v)|\le 4$.  Note that  $|Q_4(v)|\not=4$, for otherwise, $d(v_1)\geq 5$, $d(v_j)=4$ for $j\in [5]\setminus \{1\}$, and $f_2$ is a $(5, 4, 3, 4)$-face with $v_2, v_3\in Q_4(v)$, a contradiction to Lemma~\ref{l220}(4).   Therefore, by (R2.2) and (R2.3),  $v$ gives a charge sequence $(\frac{1}{2}, 1, \frac{1}{2}, 1, \frac{1}{2})$ to $f(v)$, and we have $\mu^*(v)\geq 4-2\times 1-3\times\frac{1}{2}-3\times\frac{1}{6}=0$.
\end{proof}

\begin{Lemma}\label{le36}
For each $v\in int(C_0)$, $\mu^*(v)\geq 0$.
\end{Lemma}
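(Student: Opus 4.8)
The plan is to prove $\mu^*(v)\ge 0$ by cases on $d(v)=k$, dispatching almost every degree with a lemma already in hand and isolating $k=6$ as the only case that needs genuinely new work. First I would handle the extremes. A $3$-vertex of $int(C_0)$ has $\mu(v)=2\cdot 3-6=0$ and is neither a giver nor a recipient in any of (R1)--(R5), so $\mu^*(v)=0$; the cases $k=4$ and $k=5$ are exactly Lemmas~\ref{le32} and~\ref{le35}; and for $k\ge 7$ the estimate~(\ref{eq3}) already gives $\mu^*(v)\ge \frac78 k-6\ge \frac78\cdot 7-6=\frac18>0$. Thus everything reduces to $k=6$.

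For $k=6$ I would first treat $t_3\le 2$ by the crude bound~(\ref{eq2}). Using~(\ref{eq1}) to bound $t_4+\tfrac12 t_p\le 5-2t_3$, this yields $\mu^*(v)\ge 6-\tfrac94 t_3-(5-2t_3)=1-\tfrac14 t_3\ge \tfrac12>0$, so no $6$-vertex with at most two incident $3$-faces can fail.

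The real case, and the step I expect to be the main obstacle, is $t_3=3$. Here I would first pin down the local structure: since $G$ has no adjacent $3$-cycles the three incident $3$-faces are pairwise edge-disjoint and hence occupy all six edges at $v$, forcing them to alternate with three faces that are $6^+$-faces (by Proposition~\ref{pr21} and the absence of $5$-cycles); consequently $t_4=t_p=0$ and $N(v)\cap C_0=\emptyset$. As $6^+$-faces receive nothing, $v$ pays only its three triangles and $\mu(v)=6$, so it suffices to show these three triangles collect at most $6$ in total. From (R2a)--(R2d) each triangle takes at most $\frac94$, and $\frac94$ is taken only at a bad $(6,4,3)$-face; moreover a triangle that is \emph{not} a $(3,4^-,6)$-face is a $(6,5^+,3)$- or $(6,4^+,4^+)$-face and so takes at most $\frac74$. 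The crucial structural input is Lemma~\ref{le29}(5): at most two of the three triangles are $(3,4^-,6)$-faces.

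If at most one triangle is a $(3,4^-,6)$-face, the total is at most $\frac94+2\cdot\frac74=\frac{23}{4}<6$. The delicate situation is when exactly two triangles $f,f'$ are $(3,4^-,6)$-faces and the third $g$ is a $(6,5^+,3)$- or $(6,4^+,4^+)$-face, since the naive bound $2\cdot\frac94+\frac74>6$ fails; this is precisely where I expect the argument to be most fragile, and I would resolve it through the definition of a weak $6$-vertex together with (R2c). Both $f,f'$ take $\frac94$ only if both are bad $(6,4,3)$-faces, in which case, if $g$ is a $(6,5^+,3)$-face, $v$ is by definition weak, so $g$ takes only $\frac54$ and the total is at most $2\cdot\frac94+\frac54=\frac{23}{4}<6$, while if $g$ is a $(6,4^+,4^+)$-face it takes at most $\frac32$ for a total of at most $6$. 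Otherwise at most one of $f,f'$ takes $\frac94$ and the other takes at most $2$, so with $g$ taking at most $\frac74$ the total is at most $\frac94+2+\frac74=6$. In every case $\mu^*(v)\ge 0$, completing the proof.
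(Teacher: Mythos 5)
Your proof is correct and follows essentially the same route as the paper: reduce to $d(v)=6$ via Lemmas~\ref{le32} and~\ref{le35} and the bound~(\ref{eq3}), dispose of $t_3\le 2$ with the crude estimate~(\ref{eq2}), and for $t_3=3$ invoke Lemma~\ref{le29}(5) to cap the number of $(3,4^-,6)$-faces at two, with the one tight configuration (two bad $(6,4,3)$-faces plus a $(6,5^+,3)$-face) resolved exactly as in the paper by the definition of a weak $6$-vertex and rule (R2c). The only differences are cosmetic: you treat $t_3\le 2$ uniformly rather than case by case, and you make explicit facts the paper leaves implicit (that $t_3=3$ forces $t_4=t_p=0$ and $Q_4(v)=\emptyset$, and that $3$-vertices are untouched by the rules).
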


\begin{proof}
By Lemmas~\ref{le32} and~\ref{le35}, we may assume that $d(v)\geq 6$.  We may further assume that $d(v)=6$, as when $d(v)\ge 7$, $\mu^*(v)\geq \frac{7}{8}\times 7-6=\frac{1}{8}>0$  by~(\ref{eq3}).

If $t_3=0$, then $t_4+t_p\le 6$, so by~(\ref{eq2}),  $\mu^*(v)\geq 6-(t_4+t_p)+\frac{1}{2}t_p\ge 0$. It $t_3=1$, then $t_4\leq 3$, so $\mu^*(v)\geq 6-\frac{9}{4}-t_4-\frac{1}{2}t_p>0$. If $t_3=2$, then by Proposition~\ref{pr21} $t_4\leq 1$, so $\mu^*(v)\geq 6-2\times \frac{9}{4}-1>0$.  Thus we assumer that $t_3=3$.

By (R2.1), $v$ gives at most $\frac{9}{4}$ to a $(6, 4^-, 3)$-face,  $\frac{7}{4}$ to a $(6, 5^+,3)$-face, and $\frac{3}{2}$ to other incident $3$-faces,  thus $\mu^*(v)\geq 6-\frac{9}{4}k_1-2k_2-\frac{7}{4}k_3-\frac{3}{2}k_4$, where $k_1, k_2, k_3, k_4$ are the numbers of $3$-faces that receive $\frac{9}{4}, 2, \frac{7}{4}, \mbox{at most}\, \frac{3}{2}$ from $v$, respectively.  Note that $k_1+k_2+k_3+k_4=3$, and by Lemma~\ref{l212} (5), $v$ is incident with at most two $(6, 4^-, 3)$-faces, thus $k_1+k_2\leq 2$.  Clearly, $\mu^*(v)\ge 9-\frac{9}{4}\cdot 2-\frac{7}{4}=-\frac{1}{4}$, and $\mu^*(v)<0$ only if $k_1=2$ and $k_3=1$, in which case, $v$ is weak, so by (R2c), $v$ should give $\frac{5}{4}$ instead of $\frac{7}{4}$ to the $(6, 5^+,3)$-face, a contradiction.
\end{proof}

\begin{Lemma}\label{le37}
For each $v\in C_0$,  $\mu^*(v)\geq 0$.
\end{Lemma}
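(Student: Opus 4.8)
The plan is to evaluate, for each $v\in C_0$ with $d(v)=k$, the final charge
\[
\mu^*(v)=(2k-6)+\rho(k)-\gamma(v),
\]
where $\rho(k)$ is what $v$ receives from $C_0$ by (R5) (so $\rho(2)=2$, $\rho(3)=\tfrac32$, $\rho(4)=1$, $\rho(5)=\tfrac12$, and $\rho(k)=0$ for $k\ge 6$) and $\gamma(v)$ is the total sent out by (R4) and (R3). Since $v$ lies on the outer boundary, its incident faces are the outer face $C_0$ together with $k-1$ interior faces $g_1,\dots,g_{k-1}$ taken in rotation, where $g_1$ and $g_{k-1}$ are the two faces meeting the $C_0$-neighbours $x,y$ of $v$. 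I will call $g_1,g_{k-1}$ the \emph{boundary} faces and the rest the \emph{middle} faces, and I would bound $\gamma(v)$ face by face.

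First I would record the constraints that control which faces are charged. By Lemma~\ref{le22}(3) the only $C_0$-edges at $v$ are $vx$ and $vy$, so every $4$-face charged by $v$ lies in $F_4''$ and is therefore a boundary face, a middle $4$-face being in $F_4'=\emptyset$ by Lemma~\ref{l213}(1); a charged middle $3$-face lies in $F_3'$ and a charged boundary $3$-face in $F_3''$. Because $G\in\mathcal{F}$ has no two adjacent triangles and, by Proposition~\ref{pr21}, no $3$-face meets a $4$-face along an edge, every charged $3$-face at $v$ is \emph{isolated}: both of its rotational neighbours among $g_1,\dots,g_{k-1}$ are $\ge6$-faces, hence uncharged. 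The same two facts show that a pendant $3$-face reaching $v$ through an interior neighbour $z$ forces the two faces flanking $vz$ to be $\ge6$-faces, so a pendant contribution (R3) never shares an edge with a charged face and is always cheaper than a middle $3$-face. With this bookkeeping the cases $k=2$ (the sole interior face is a $\ge6$-face, $\mu^*=0$), $k=4$ (budget $3$), $k=5$ (budget $\tfrac92$) and $k\ge6$ (budget $2k-6$) follow by counting isolated triangles and boundary faces; the bound is tightest at $k=6$, where the maximal charge $6$ is attained by isolated triangles, and it has strictly positive slack for $k\ge7$.

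The delicate case, and the main obstacle, is $k=3$. Here the two interior faces $f_1,f_2$ share the edge $vw$, where $w$ is the unique interior neighbour, and the facts above leave a single dangerous configuration: $f_1,f_2$ both in $F_4''$, giving $\gamma(v)=2>\tfrac32$. I would exclude it by reducibility rather than counting. Write $f_1=vxaw$ and $f_2=vwby$; by Lemma~\ref{le22}(5) contracting a diagonal pair of either $4$-face keeps the graph in $\mathcal{F}$, and contracting $\{w,y\}$ (respectively $\{x,w\}$) turns $C_0$ into a triangle or $7$-cycle of smaller $\sigma$, to which by minimality every $(1,1,0)$-coloring of the new boundary superextends. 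The real work is to lift such a coloring back to $G$: the merged vertex forces $w$ to take the colour of $y$ (respectively $x$), which may clash with the superextension requirement $c(w)\ne c_0(v)$ or with the $(1,1,0)$-condition at the common neighbour $b$ of $w$ and $y$. I expect to settle this by contracting $\{w,y\}$ when $c_0(v)\ne c_0(y)$ and $\{x,w\}$ when $c_0(v)\ne c_0(x)$ — a $(1,1,0)$-coloring of the outer cycle cannot equate $c_0(v)$ with both neighbours simultaneously — and, in the residual subcase where the clash lands on $b$, by recolouring $w$ with the third colour exactly as in the extension arguments of Section~2. Producing this lift is the crux; once the two-$F_4''$ configuration is ruled out, the $k=3$ count gives $\mu^*(v)\ge0$ and the lemma follows.
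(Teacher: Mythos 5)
Your bookkeeping for $k\ne 3$ is essentially the paper's own argument: the paper also charges at most $\tfrac{3}{2}$ per interior edge at $v$ (at most $\lfloor\frac{k-2}{2}\rfloor$ faces of $F_3'$ at $3$ each, boundary faces of $F_3''\cup F_4''$ at most $\tfrac{3}{2}$, pendant $3$-faces at $\tfrac{1}{2}$), which gives $\mu^*(v)\ge \frac{k}{2}-3+\rho(k)$ and settles $k=2,4,5$ and $k\ge 6$ exactly as you do. The genuine divergence is at $k=3$, and there your proposal is more careful than the paper. The paper's whole treatment of $k=3$ is the assertion that $v$ sends out at most $\tfrac{3}{2}$; but the per-edge accounting breaks precisely here, because the two interior faces at $v$ share the single interior edge $vw$, and nothing stated in the paper excludes the configuration you isolate, namely both of them being $4$-faces of $F_4''$, which under (R4) costs $1+1=2>\tfrac{3}{2}$. (When $C_0$ is a triangle this cannot happen, since $x\sim y$ and $xawbyx$ would be a $5$-cycle; but when $C_0$ is a $7$-cycle the configuration is consistent with $\mathcal{F}$, with Lemma~\ref{le22}(3)(4), with Lemma~\ref{l213}, and with Proposition~\ref{pr21}.) Your reduction correctly disposes of it: one of $c_0(v)\ne c_0(y)$, $c_0(v)\ne c_0(x)$ must hold, since a $(1,1,0)$-coloring of the cycle $C_0$ cannot give $v$ two neighbors of its own color; contract the corresponding diagonal pair ($\{w,y\}$ of $f_2$, resp.\ $\{x,w\}$ of $f_1$), use Lemma~\ref{le22}(5) and minimality of $\sigma$ to superextend $c_0$ to the contracted graph, and pull back, coloring $w$ with $c_0(y)$ (resp.\ $c_0(x)$); since $N(w)\cap C_0=\{v\}$ by Lemma~\ref{le22}(4), the case choice guarantees the superextension requirement at $w$. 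So your route buys a complete treatment of a case the paper's one-line count silently skips, at the price of one extra reducibility argument.

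One correction that simplifies your lift: the ``residual subcase where the clash lands on $b$'' is empty, so the vague step of recoloring $w$ with a third color is never needed. After contracting, say, $\{w,y\}$, any common neighbor of $w$ and $y$ (such as $b$) is a vertex of $G[\{w,y\}]-C_0$ adjacent to the merged vertex $w(y)$, which lies on $C_0$; the superextension property of the smaller graph therefore already forces $c(b)\ne c_0(y)$, so un-identifying cannot produce two same-colored neighbors at $b$, and the defect of every other vertex can only decrease when $w(y)$ is split back into the nonadjacent pair $w,y$. The analogous remark applies to the contraction $\{x,w\}$, and the vertex $v$ itself is protected by your choice of which pair to contract. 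With that observation your argument is complete as stated.
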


\begin{proof}
Let $d(v)= k$. By Proposition~\ref{pr21}, $k\geq 2$.

If $k= 2$, then by (R4), $\mu^*(v)= 2\times 2-6+2=0$. If $k=3$, then  $v$ cannot be incident with faces in $F_3'\cup F_4'$. In this case, $v$ may be incident with a face in $F_3''\cup F_4''$. By (R4) and (R5), $\mu^*(v)\geq \frac{3}{2}-\frac{3}{2}=0$.  Let $k=4$. If $v$ is incident with a $3$-face in $F_3'$, then it is not incident with other $3$- or $4$-faces, thus by (R4) and (R5), $\mu^*(v)\geq 2-3+1=0$;  if $v$ is incident with faces from $F_3''\cup F_4''$, then by  (R4) and (R5),  $\mu^*(v)\geq 2-\frac{3}{2}\cdot 2+1=0$.

Let $k\ge 5$.  The vertex $v$ is incident with at most $\lfloor \frac{k-2}{2}\rfloor$ faces in $F'$.  By (R3), (R4) and (R5), $$\mu^*(v)\ge (2k-6)-3\cdot \lfloor \frac{k-2}{2}\rfloor-\frac{3}{2}\cdot (k-2-2\cdot \lfloor \frac{k-2}{2}\rfloor)=\frac{k}{2}-3.$$
Thus, $\mu^*(v)\ge 0$ if $k\ge 6$. When $k=5$, $v$ gains $\frac{1}{2}$ from $C_0$, so $\mu^*(v)\ge 0$ as well.
\end{proof}

\medskip

Finally, we consider $\mu^*(C_0)$. For $i\in \{2, 3, 4,5\}$, let  $s_i$ be the number of $i$-vertices on $C_0$. Then $|C_0|\geq s_2+s_3+s_4+s_5$.  By (R5),
\begin{align*}
\mu^*(C_0)&\geq |C_0|+6-2s_2-\frac{3}{2}s_3-s_4-\frac{1}{2}s_5\geq |C_0|+6-\frac{3}{2}(s_2+s_3+s_4+s_5)-\frac{1}{2}s_2\\
        &\geq |C_0|+6-\frac{3}{2}|C_0|-\frac{1}{2}s_2=6-\frac{1}{2}(|C_0|+s_2)
\end{align*}
Note that $|C_0|=3$ or $7$.  If $|C_0|=3$ or $s_2\le 5$, then $\mu^*(C_0)\geq 0$. Hence we may assume that  $|C_0|= 7$ and $(s_2, s_3, s_4, s_5)\in \{(6, 1, 0, 0), (7, 0, 0, 0)\}$. If $s_2= 7$,  then $G= C_0$ and it is trivially superextendable.  If $s_2 = 6$ and $s_3 = 1$, then by (R5), $C_0$ gains $1$ from the adjacent face which has  degree more than $7$. Thus, $\mu^*(C_0)\geq \frac{1}{2}>0$.

We have shown that all vertices and faces have non-negative final charges. Furthermore, the outer-face  has positive charges, except when $|C_0| = 7$ and $s_2 = 5$ and $s_3 = 2$  (the two $3$-vertices must be adjacent and  has a common neighbor not on $C_0$), in which there must be a face other than $C_0$  having degree more than $7$.  Thus the face has positive final charge. Therefore, $\sum_{x\in V(G)\cup F(G)}\mu^*(x)>0$, a contradiction.

\small

\end{document}